\numberwithin{equation}{section}
\newcommand*{\dt}[1]{%
  \accentset{\mbox{\large\bfseries .}}{#1}}
\newtheorem{theorem}{Theorem}
\newtheorem{lemma}{Lemma}[section]
\newtheorem{proposition}{Proposition}[section]
\newtheorem{assumption}{Assumption}
\newtheorem*{assumption*}{Assumption}
\newtheorem*{remarks*}{Remarks}
\newtheorem*{remark*}{Remark}
\newcommand{\R}{\mathbb{R}} 
\newcommand{\C}{\mathbb{C}} 
\newcommand{\N}{\mathbb{N}} 
\newcommand{\T}{\mathbb{T}}
\newcommand{\DD}{\Delta}
\newcommand{\FF}{\mathcal{F}}
\newcommand{\be}{\begin{equation}}
\newcommand{\ee}{\end{equation}}
\newcommand{\bes}{\begin{equation*}}
\newcommand{\ees}{\end{equation*}}
\newcommand{\pt}{\partial}
\def\section{\@startsection{section}{1}%
  \z@{1.5\linespacing\@plus\linespacing}{.5\linespacing}%
  {\normalfont\bfseries\large\centering}}
\renewcommand{\theta}{\vartheta}
\newcommand{\wf}{\widehat{f}}
\newcommand{\wcf}{\widehat{\overline{f}}}
\newcommand{\wg}{\widehat{g}}
\newcommand{\wv}{\widehat{v}}
\renewcommand{\leq}{\leqslant}
\renewcommand{\geq}{\geqslant}
\newcommand{\bsharp}{{\sharp}}
\newcommand{\Copt}{\mathcal{C}}
\newcommand{\weakto}{\rightharpoonup}
\newcommand{\eps}{\varepsilon}
\begin{document}

\title[Rearrangement in Fourier Space and Symmetry Results for PDEs]{A sharp rearrangement principle in Fourier space and symmetry results for PDEs with arbitrary order}

\begin{abstract}
We prove sharp inequalities for the symmetric-decreasing rearrangement in Fourier space of functions in $\R^d$. Our main result can be applied to a general class of (pseudo-)differential operators in $\R^d$ of arbitrary order with radial Fourier multipliers. For example, we can take any positive power of the Laplacian $(-\DD)^s$ with $s> 0$ and, in particular, any polyharmonic operator $(-\DD)^m$ with integer $m \geq 1$.

As applications, we prove radial symmetry and real-valuedness (up to trivial symmetries) of optimizers for: i) Gagliardo--Nirenberg inequalities with derivatives of arbitrary order, ii) ground states for bi- and polyharmonic NLS, and iii) Adams--Moser--Trudinger type inequalities for $H^{d/2}(\R^d)$ in any dimension $d \geq 1$. As a technical key result, we solve a phase retrieval problem for the Fourier transform in $\R^d$. To achieve this, we classify the case of equality in the corresponding Hardy--Littlewood majorant problem  for the Fourier transform in $\R^d$.
\end{abstract}

\author{Enno Lenzmann}
\address{University of Basel, Department of Mathematics and Computer Science, Spiegelgasse 1, CH-4051 Basel, Switzerland.}
\email{enno.lenzmann@unibas.ch}

\author{J\'er\'emy Sok}
\address{University of Basel, Department of Mathematics and Computer Science, Spiegelgasse 1, CH-4051 Basel, Switzerland.}
\email{jeremyvithya.sok@unibas.ch}

\maketitle

\section{Introduction and Main Result}

In the calculus of variations, one of the canonical problems is to study symmetries of optimizers. In particular, it is of interest to understand to what extent optimizers share (or break) symmetries from the underlying variational problem. 

As a concrete example, consider a functional $E : X \to \R$ defined on some Banach space $X$ of complex-valued functions $u : \R^d \to \C$. In many cases, the functional $E[u]$ is invariant under spatial rotations and shifts of the complex phase, which means that
$$
E  [e^{i \alpha} u(R \cdot) ] = E[u] \quad \mbox{for all $R \in O(d)$ and $\alpha \in  \R$}.
$$
A natural question is whether minimizers $Q \in X$ of $E[u]$ (possibly under some additional constraint) must also share this invariance property from the functional $E[u]$ up to trivial symmetries. That is, any minimizer $Q: \R^d \to \R$ is radially symmetric and real-valued, after replacing $Q(x) \to e^{i \alpha} Q(x-x_0)$ with some constants $x_0 \in \R^d$ and $\alpha \in \R$ if necessary.

For a broad class of variational problems where $E[u]$ contains {\em first-order derivatives}, we recall that proving radial symmetry and real-valuedness of optimizers can be typically deduced from a well-established triad of arguments (see e.\,g.~\cite{Sp-74,Hi-76,Ta-76,BrZi-88,AlLi-89,GiNiNi-81}) as follows.
\begin{enumerate}
\item[(A)] The {\em Polya--Szeg\"o inequality} $\| \nabla u^* \|_{L^p} \leq \| \nabla u \|_{L^p}$, where $u^*$ denotes the symmetric-decreasing rearrangement (Schwarz rearrangement) of $u \in W^{1,p}(\R^d)$.
\item[(B)] The {\em moving plane method} for the corresponding Euler-Lagrange equation.
\item[(C)] The elementary inequality $\| \nabla|u| \|_{L^p} \leq \| \nabla u \|_{L^p}$.
\end{enumerate}
However, if we treat variational problems that contain {\em higher-order differential operators}, the methods mentioned above  cannot be applied to deduce symmetry results for optimizers (except for some special cases). Indeed, a model example arises if we wish to minimize the functional
$$
E[u] =  \| \Delta u \|_{L^2}^2  - \| u \|_{L^p}^p 
$$
among all complex-valued functions $u \in H^2(\R^d)$ with $\| u \|_{L^2}=1$, say. For this problem, none of the arguments (A)--(C) can be applied to deduce radial symmetry or real-valuedness of minimizers (up to translation and phase).  For instance, even though we have $\Delta u \in L^p(\R^d)$ it may happen that $\Delta u^* \not \in L^p(\R^d)$ or $\Delta |u| \not \in L^p(\R^d)$, showing that both (A) and (C) are not at our disposal. As for (B), that the moving plane method cannot be applied to the corresponding Euler-Lagrange equation
$$
\DD^2 u + \lambda u -|u|^{p-2} u = 0 \quad \mbox{in $\R^d$ with some $\lambda >0$},
$$
due to the lack of a maximum principle for $\DD^2 + \lambda$ in $\R^d$.

In the present paper, we propose a new strategy to improve the situation for differential operators of arbitrary order by considering rearrangements in Fourier space. In particular, we will obtain a useful substitute for the Polya--Szeg\"o rearrangement inequality in $\R^d$ and analyze the case of equality.

To formulate our main result, we will first need to introduce some basic definitions as follows. Given a function $f$ in $L^2(\R^d)$, say, we define its {\bf Fourier rearrangement} to be the function $f^\sharp \in L^2(\R^d)$ given by
\be
f^\sharp = \FF^{-1} \left \{ (\FF f)^{*} \right \} .
\ee  
Here $\FF$ denotes the Fourier transform on $\R^d$ (with its inverse $\FF^{-1}$) and  $g^* : \R^d \to [0, \infty)$ stands for the symmetric-decreasing rearrangement (Schwarz symmetrization) of a measurable function $g : \R^d \to \C$ that vanishes at infinity. See Section \ref{sec:prelim} for more details.

Evidently, the mapping $f \mapsto f^\sharp$ is highly nonlinear (due to the symmetric-rearrangement) and nonlocal (due taking the Fourier transform). Thus, the analytic fine properties of operation may be challenging to study. But some of basic features can be obtained in a straightforward fashion. First of all, the radial symmetry of $(\FF f)^*$ and its real-valuedness together imply that the Fourier rearrangement $f^\sharp : \R^n \to \R$ is always a radially symmetric and real-valued function. Of course, unlike its symmetric-decreasing rearrangement $f^*(x) \geq 0$, the function $f^\sharp(x)$ may change its sign. (But under the extra assumption that $\wf \in L^1(\R^d)$, the function $f^\sharp(x)$ is always {\em positive definite} in the sense of Bochner; see Section \ref{sec:applications} below for a potential relevance of this concerning ground states for polyharmonic NLS.)

A further straightforward and useful property of the Fourier rearrangement is that it does not increase the norm for the Sobolev spaces $H^s(\R^d)$ for all $s \geq 0$. In particular, we readily find the general inequality
\be \label{ineq:polya_fourier}
\| (-\Delta)^s f^\sharp \|_{L^2} \leq \| (-\Delta)^s f \|_{L^2}
\ee
valid for any $f \in H^s(\R^d)$ and all $s \geq 0$. Moreover, it is elementary to verify that the mapping $f \mapsto f^\sharp$ is not just bounded but it is indeed a continuous map on $H^s(\R^d)$ for all $s \geq 0$ (see below). As we will see, this preservation of regularity makes the Fourier rearrangement well suited for the study of symmetry properties of solutions to variational PDEs in $\R^d$ that contain (pseudo-)differential operators of arbitrary order (e.\,g.~the biharmonic operators $\DD^2$).

Let us emphasize that the general inequality \eqref{ineq:polya_fourier} is valid for all $s \geq 0$. Hence, it is in stark contrast to the celebrated Polya--Szeg\"o principle for the symmetric-decreasing rearrangement, which fails in $H^s(\R^d)$ when $s>1$ and therefore has only very limited use when dealing with higher-order derivatives. Furthermore, we recall the remarkable result proved by Almgren and Lieb \cite{AlLi-89}, who showed that  the map $f \mapsto f^*$ fails to be continuous in $H^1(\R^d)$ for $d \geq 2$, although it is a bounded map on $H^1(\R^d)$. Of course, such pathologies cannot exist for the Fourier rearrangement on any of the Sobolev spaces $H^s(\R^d)$ with $s \geq 0$.\footnote{As an amusing aside, we remark that, by Plancherel, the Almgren--Lieb result carries over to the Fourier rearrangement acting in the weighted space $L^2_w=L^2(\R^d, |x|^2 \, dx)$, i.\,e., the map $f \mapsto f^\sharp$ fails to be continuous in $L^2_w$ in dimensions $d \geq 2$, although we have the boundedness property $\| f^\sharp \|_{L^2_w} \leq \| f\|_{L^2_w}$.} 

A canonical question concerning \eqref{ineq:polya_fourier} is to study the case of equality. In view of the classical result by Brothers and Ziemer \cite{BrZi-88} on the case of equality in the Polya--Szeg\"o inequality, it seems natural to ask whether equality in \eqref{ineq:polya_fourier} with $s > 0$ implies that 
\be \label{eq:equal_f}
f(x)= e^{i \alpha} f^\sharp(x-x_0) \quad \mbox{for a.\,e.~$x \in \R^d$}
\ee
with some constants $\alpha \in \R$ and $x_0 \in \R^d$. However, such a claim is too naive and cannot be true in general. In fact, a closer inspection shows that equality in \eqref{ineq:polya_fourier} with $s > 0$ occurs if and only if the Fourier transforms $\wf$ satisfies
\be \label{eq:f_fstar}
|\wf(\xi)| = (\wf)^*(\xi) \quad \mbox{for a.\,e.~$\xi \in \R^d$}.
\ee
Clearly, this type of information alone is far too weak to conclude that a strong conclusion like \eqref{eq:equal_f} holds. Indeed, if we pick an arbitrary measurable phase function $\theta : \R^d \to \R$ and replace 
$$
\wf(\xi) \to e^{i \vartheta(\xi)} \wf(\xi),
$$
 we see that $f$ and $f^\sharp$ cannot be in general related as in \eqref{eq:equal_f} above. 
 
In some sense, the task to recover $f$ from equation \eqref{eq:f_fstar} with some suitable extra conditions can be seen as a {\bf phase retrieval problem} in Fourier analysis. That is, given the modulus $|\wf|$ one tries to determine the  phase function $\theta$ up to some manageable degree of ambiguity. In this direction, a typical condition is to impose that $\wf$ is analytic on $\C^d$ (with some growth condition). Such an assumption, by Paley--Wiener theory, forces the function $f$ to be compactly supported in $\R^d$. However, a restriction to compactly supported $f$ would be of no use in the situation we are interested in here (e.\,g., think of $f$ being an optimizer that solves some PDE, which typically have a unique continuation property and hence $f$ cannot vanish on open non-empty sets in $\R^d$).

To solve the kind of phase retrieval problem mentioned above, we will use a different strategy as follows. In fact, we will prove below that the freedom to choose $\vartheta : \R^d \to \R$ will dramatically diminish, if we additionally impose that equality holds in another type of inequality for the $L^p$-norm of $f$, which naturally arises in variational problems. More precisely, by applying the Brascamp--Lieb--Luttinger multilinear convolution inequality on the Fourier side, we deduce the inequality
\be \label{ineq:Lp_hardy}
\| f \|_{L^{p}} \leq \| f^\sharp \|_{L^p},
\ee
provided that $p > 2$ is an even integer or $p=\infty$. Now, by imposing that $\wf$ is continuous, we can conclude that joint equality in {\em both} inequalities \eqref{ineq:polya_fourier} and \eqref{ineq:Lp_hardy} can occur if and only if the phase function $\theta$ is {\em affine}, i.\,e., we have
$$
\widehat{f}(\xi) = e^{i (\alpha +  x_0 \cdot \xi)} |\widehat{f}|^*(\xi) \quad \mbox{for a.\,e.~$\xi \in \R^d$},
$$
with some constants $\alpha \in \R$ and $x_0 \in \R^d$. From this the desired claim \eqref{eq:equal_f} immediately follows.  We refer to Theorem \ref{thm:main} below for a more general and precise statement. 

As a main technical ingredient in our proof, we will classify the case of equality in the so-called {\bf Hardy--Littlewood majorant problem} in $\R^d$ for the $L^{p}$-norms with $p \in 2 \N \cup \{ \infty \}$.  Here, we will make essential use of the fact that the set $\Omega = \{ |\wf(\xi)| > 0 \}$ is {\em connected} in $\R^d$. In particular, this topological property holds true in our case, since $|\wf|= (\wf)^*$ implies that the set $\Omega$ is either some open ball around the origin or all of $\R^d$. We refer to Section \ref{sec:major} below for more details on how the topology of $\Omega$ enters the proof.

\medskip

We will now formulate our first main result. Since our methods easily generalize to differential operators including any positive powers of the Laplacian $(-\Delta)^s$ in $\R^d$, we  introduce a suitable class of rotationally invariant (pseudo-)differential operators $L$ on $\R^d$ that satisfy the following conditions.

\begin{assumption} \label{ass:L}
Let $d \in \N$ and suppose $\omega : \R^d \to \R$ is a real-valued and locally integrable function. Let $L$ denote the corresponding (pseudo-)differential operator defined as 
$$
\widehat{(L f)}(\xi) = \omega(\xi) \widehat{f}(\xi)
$$ 
for Schwartz functions $f \in \mathcal{S}(\R^d)$. We assume that the following conditions hold.
\begin{itemize}
\item[(i)] There exist constants $s \geq 0$ and $C \geq 0$ such that
$$
0 \leq \omega(\xi) \leq C \left ( 1 + |\xi|^{2s} \right ) \quad \mbox{for a.\,e.~$\xi \in \R^d$}.
$$
\item[(ii)] The function $\omega(\xi)$ is {\em \textbf{radially symmetric}} and {\em \textbf{strictly increasing}} with respect to $|\xi|$, i.\,e., we have
$$
\omega(\xi) = \omega(\eta) \ \ \mbox{if $|\xi| = |\eta|$} \quad \mbox{and} \quad  \omega(\xi) < \omega(\eta) \ \ \mbox{if $|\xi| < |\eta|$}.
$$
\end{itemize}
\end{assumption}

\begin{remark*} {\em  Typical examples for operators $L$ in $\R^d$ are as follows.
\begin{itemize}
\item $L=(-\DD)^s$ and $L=(-\Delta + 1)^{s/2}$  with $s > 0$. 
\item $L =(-\Delta)^{s} \coth (-\Delta)^{s}$ with $s > 0$. For $s=1/2$ and $d=1$, this operator arises in the intermediate long-wave equation (ILW) for water waves.
\item $L$ with multiplier $\omega(\xi)= 1-\sqrt{\tanh (h \xi)/\xi}$ on $\R$ with some constant $h>0$. The operator $L$ arises in the Whitham equation for water waves. Note that $L$ is bounded on $L^2(\R)$ so that Assumption 1 is satisfied with $s=0$.
\end{itemize}
If the operators $L_1, \ldots, L_m$ satisfy Assumption 1, so does any linear combination $L= \sum_{i=1}^m c_i L_i$ with coefficients $c_i \geq 0$. For instance, we can take $L= \DD^2 - \beta \DD$ with $\beta \geq 0$.
}
\end{remark*}

Throughout the following, we let $\langle f, g \rangle = \int_{\R^d} \overline{f(x)} g(x) \, dx$ denote the standard inner product on $L^2(\R^d)$. By Plancherel's identity, we have
$$
\langle f , L f \rangle = \int_{\R^d} \omega(\xi) |\wf(\xi)|^2 \, d \xi,
$$
which is well-defined for $f \in H^s(\R^d)$ provided $L$ satisfies Assumption \ref{ass:L}.  

We now can state our main result, which provides the following strict rearrangement inequalities involving the quantities $\langle f, L f \rangle$ and $\| f \|_{L^p}$ with even integer $p>2$ or $p = \infty$. 

\begin{theorem}[A Strict Rearrangement Principle in Fourier Space] \label{thm:main}
Let $d \in \N$ and suppose $L$ satisfies {\em \textbf{Assumption \ref{ass:L}}} above with some $s \geq 0$. Assume that $p > 2$ is an even integer or $p=\infty$ and let $1 \leq p' \leq 2$ be its conjugate exponent, i.\,e., $1/p+1/p'=1$. 

Then, for any $f \in H^s(\R^d) \cap \FF(L^{p'}(\R^d))$, we have $f^\bsharp \in H^s(\R^d) \cap \FF(L^{p'}(\R^d))$ and
\be \tag{$\spadesuit$} \label{ineq:main}
\langle f^\bsharp, L f^\bsharp \rangle \leq \langle f, L f \rangle \quad \mbox{and} \quad \| f \|_{L^p} \leq \| f^\sharp \|_{L^p}.
\ee

In addition, assume  that $\widehat{f}(\xi)$ is continuous. Then equality occurs in  both inequalities in \eqref{ineq:main} if and only if $f(x)$ equals its Fourier rearrangement $f^\bsharp(x)$ up to a constant phase and translation, i.\,e.,
$$
f(x) = e^{i \alpha} f^\bsharp(x-x_0) \quad \mbox{for a.\,e.~$x \in \R^d$},
$$
with some constants $\alpha \in \R$ and $x_0 \in \R^d$. In particular, the function $f(x)$ is radially symmetric and real-valued (up to translation and constant phase).
\end{theorem}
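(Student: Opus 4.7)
My plan splits into establishing the two inequalities in $(\spadesuit)$ and then classifying the equality case. For the first inequality, Plancherel rewrites both sides as $\int_{\R^d} \omega(\xi) |\widehat{f}(\xi)|^2 d\xi$ and $\int_{\R^d} \omega(\xi) (|\widehat{f}|^*(\xi))^2 d\xi$, so it suffices to show the latter is smaller. I would use the layer-cake identity
$$
\int_{\R^d} \omega\,|\widehat{f}|^2\,d\xi = \int_0^\infty \!\Big(\int_{E_t} \omega(\xi)\,d\xi\Big)\,dt, \qquad E_t = \{|\widehat{f}|^2 > t\},
$$
and observe that, since $\omega$ is radially increasing in $|\xi|$, the centred ball $E_t^*$ of the same measure as $E_t$ minimises $\int_A \omega$; since $E_t^* = \{(|\widehat{f}|^*)^2 > t\}$, integration in $t$ yields the claim. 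For the second inequality with $p = 2k$, I would express $\|f\|_{L^{2k}}^{2k}$ via Parseval as a multilinear convolution integral of $\widehat{f}$ on the hyperplane $\xi_1 + \cdots + \xi_k = \xi_{k+1} + \cdots + \xi_{2k}$, majorise $\widehat{f}$ pointwise by $|\widehat{f}|$, and then apply Brascamp--Lieb--Luttinger to replace $|\widehat{f}|$ by $|\widehat{f}|^* = \widehat{f^\sharp}$. For $p = \infty$ a one-line argument suffices: $\|f\|_{L^\infty} \leq \|\widehat{f}\|_{L^1} = \||\widehat{f}|^*\|_{L^1} = f^\sharp(0) = \|f^\sharp\|_{L^\infty}$.

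The equality analysis proceeds in two stages. First, equality in the $L$-inequality together with the \emph{strict} monotonicity of $\omega$ in $|\xi|$ forces each super-level set $E_t$ to coincide with the centred ball $E_t^*$ up to a null set, hence $|\widehat{f}| = |\widehat{f}|^*$ almost everywhere. Using the continuity assumption on $\widehat{f}$, I can then write
$$
\widehat{f}(\xi) = e^{i\vartheta(\xi)}\,|\widehat{f}|^*(\xi) \qquad \text{on } \Omega := \{|\widehat{f}| > 0\}
$$
for some measurable phase $\vartheta$. Since $|\widehat{f}|^*$ is radially decreasing and continuous on $\Omega$, the set $\Omega$ is either an open centred ball or all of $\R^d$; in either case it is open and, crucially, \emph{connected}, which is the topological input the introduction singles out as essential.

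The main obstacle is the second stage: upgrading the mere measurability of $\vartheta$ to the rigid conclusion $\vartheta(\xi) = \alpha + x_0 \cdot \xi$ on $\Omega$ by exploiting equality in $\|f\|_{L^p} = \|f^\sharp\|_{L^p}$. This is the phase-retrieval problem, which the paper treats as its deep technical core via the classification of equality in the Hardy--Littlewood majorant problem. For $p = 2k$, equality in the Brascamp--Lieb--Luttinger step should translate into the functional identity
$$
\vartheta(\xi_1) + \cdots + \vartheta(\xi_k) \equiv \vartheta(\xi_{k+1}) + \cdots + \vartheta(\xi_{2k}) \pmod{2\pi}
$$
for a.e.\ tuples with $\xi_1 + \cdots + \xi_k = \xi_{k+1} + \cdots + \xi_{2k}$ in $\Omega$; using continuity of $\widehat{f}$ to extract a continuous branch of $e^{i\vartheta}$ and connectedness of $\Omega$ to propagate the identity across the domain, this Cauchy-type equation forces $\vartheta$ to be the restriction of an affine function. (The $p = \infty$ case is analogous but easier: equality in $|f(x_*)| \leq \int |\widehat{f}|$ at a maximising point $x_*$ directly pins $\vartheta(\xi) = \alpha - x_* \cdot \xi$ on $\Omega$.) Once affineness is established, $\widehat{f}(\xi) = e^{i\alpha}\, e^{i x_0 \cdot \xi}\,|\widehat{f}|^*(\xi) = e^{i\alpha}\,\widehat{f^\sharp(\cdot - x_0)}(\xi)$, and inverting the Fourier transform gives $f(x) = e^{i\alpha} f^\sharp(x - x_0)$ as claimed.
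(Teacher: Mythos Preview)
Your proposal is correct and follows essentially the same route as the paper: the two inequalities are established exactly as in Lemmas~4.1 and~4.2 (layer-cake plus strict monotonicity of $\omega$ for the first; the multilinear convolution formula plus Brascamp--Lieb--Luttinger for the second), and the equality analysis matches Lemma~5.2 and the short proof in Section~5.1. Two small clarifications: once $|\widehat{f}|=|\widehat{f}|^*$ is known, the BLL step is automatically an equality, so the phase constraint actually comes from equality in the triangle inequality $\int F \le \int |F|$ for the multilinear integrand; and the paper makes your ``Cauchy-type equation $\Rightarrow$ affine'' step explicit by specialising the constraint to $\eta_1=-\eta_2=h$, $\xi_1=\xi_2=q$ (rest zero) to obtain the midpoint-convexity identity $\theta(q)=\tfrac12(\theta(q+h)+\theta(q-h))$ and then invoking an elementary lemma (Lemma~A.1) to conclude affineness on the connected set $\Omega$.
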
 

\begin{remarks*}{\em 
1) The condition $f \in \FF(L^{p'}(\R^d))$ becomes superfluous if the exponent $p > 2$ is {\em $H^s(\R^d)$-subcritical}, i.\,e.,
\be \label{def:ps} 
2 < p < p_*(d,s) := \begin{dcases*} \frac{2d}{d-2s} & if $0 < s < d/2$, \\ \infty & if $s \geq d/2$. \end{dcases*} 
\ee
Furthermore, we can also take $p=\infty$ if $s > d/2$. In fact, by H\"older's inequality, we find $\| \widehat{f} \|_{L^{p'}} \leq C \| (1+|\xi|^2)^{s/2} \widehat{f} \|_{L^2} \leq C \| f\|_{H^s}$ under the condition on $p$ stated above. 

2) By Plancherel and the equimeasurability property of symmetric-decreasing rearrangements, we have that 
$$
\| f \|_{L^2} = \| f^\sharp \|_{L^2} \quad \mbox{for all $f \in L^2(\R^d)$}.
$$ 
Thus the classification of the equality case in \eqref{ineq:main} must clearly fail for $p=2$. 

3) The continuity assumption for $\wf$ is due to technical convenience and it can probably be relaxed (or completely dropped). However, in the applications of Theorem \ref{thm:main} below, it will turn out there that the continuity of $\widehat{f}$ can be deduced from the corresponding Euler-Lagrange equation satisfied by the optimizers. See below for details.

4) As shown below, the set 
$$
H^{s, \sharp}(\R^d) = \{ f \in H^s(\R^d) : f=f^\sharp \}
$$
is weakly closed in $H^s(\R^d)$ and compactly embedded in $L^p(\R^d)$ for all $2 < p < p_*(s,d)$. This compactness fact can be used to provide a slick  existence proof of minimizers for variational problems that are amenable to Fourier rearrangement; see Section \ref{sec:compact} below.
}
\end{remarks*}

\subsection*{Acknowledgments}
Both authors were supported by the Swiss National Science Foundations (SNF) through Grant No.~20021-169464. They also wish to thank Rupert Frank for drawing our attention to \cite{Montgomery76, Frank2016} and for his helpful correspondence \cite{Frank2018} about the real-valuedness  of optimizers for the Gagliardo--Nirenberg inequality and 
E.\,L.~is grateful to Rowan Killip for pointing out the work in \cite{Bo-62} on the Hardy--Littlewood majorant problem. Finally, the authors wish to thank Maria Ahrend for her careful proofreading.

\section{Applications: Radial Symmetry of Optimizers}

\label{sec:applications}

In this section, we discuss some  applications of Theorem \ref{thm:main} to show radial symmetry of optimizers. More precisely, we will consider the following examples.
\begin{itemize}
\item Gagliardo--Nirenberg type inequalities with derivatives of arbitrary order.
\item Ground states for generalized NLS type (e.\,g.~fourth-order NLS).
\item Adams--Moser--Trudinger type inequalities in $\R^d$.
\end{itemize}
This list of  applications  is by far non-exhaustive and further applications of Theorem \ref{thm:main}  will be addressed in future work.  

\subsection{Gagliardo--Nirenberg Type Inequalities}
Suppose $d \in \N$ and let $s > 0$ be a real number in what follows. We consider the following Gagliardo--Nirenberg (GN) type interpolation inequality:
\be \tag{GN} \label{ineq:GN}
\| f \|_{L^p} \leq \Copt_{d,s,p} \| (-\Delta)^{\frac s 2} f \|_{L^2}^\theta \| f \|_{L^2}^{1-\theta},
\ee
which is valid for any $f \in H^s(\R^d)$, assuming that we take
\be
2 < p < p_*(s,d) \quad \mbox{and} \quad \theta = \frac{d (p-2)}{2sp},
\ee
where we recall the definition  of $p_*(s,d)$ from \eqref{def:ps}. By Plancherel's theorem, the special case of integer $s = m \in \N$ can be written as
\be \label{ineq:GN_m}
\| f \|_{L^p} \leq \Copt_{d,m,p} \| \nabla^m f \|_{L^2}^\theta \| f \|_{L^2}^{1-\theta},
\ee
where we denote  $\nabla^m = \nabla (-\Delta)^{(m-1)/2}$ if $m$ odd and $\nabla^m = (-\Delta)^{m/2}$ if $m$ is even. In particular, the Gagliardo--Nirenberg inequality with $m=1$ is of central importance in the study of NLS; see \cite{We-83}. 

Indeed, the existence of an optimal constant $\Copt_{d,s,p}\in (0, \infty)$ as well as optimizers $Q \in H^s(\R^d)$ for inequality \eqref{ineq:GN} can be deduced from standard variational arguments (e.\,g., concentration-compactness methods, see also \cite{BeFrVi-14} for an approach using a different compactness lemma). In the appendix, we will provide an alternative and rather elementary existence proof of optimizers by using the Fourier rearrangement techniques. 


A direct application of Theorem \ref{thm:main} yields the following symmetry result for optimizers of inequality \eqref{ineq:GN}, which also covers all $s > 0$,  provided that $p>2$ is an even integer.

\begin{theorem} \label{thm:GN}
Let $d \in \N, s > 0$, and suppose $2 <  p < p_\star$ is an even integer. Then any optimizer $Q \in H^s(\R^d)$ for the Gagliardo--Nirenberg inequality \eqref{ineq:GN} satisfies
\be
Q(x) = e^{i \alpha} Q^\sharp(x-x_0) \quad \mbox{for a.\,e.~$x \in \R^d$},
\ee
with some constants $\alpha \in \R$ and $x_0 \in \R^d$. As a consequence, the function $Q (x)$ is radially symmetric on $\R^d$ (up to translation) and real-valued (up to a constant phase).
\end{theorem}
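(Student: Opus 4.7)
The strategy is to apply Theorem \ref{thm:main} with $L = (-\Delta)^s$, whose Fourier multiplier $\omega(\xi) = |\xi|^{2s}$ obviously satisfies Assumption \ref{ass:L}, and then use the equality case to conclude. Since $p$ is an even integer with $2 < p < p_\ast(d,s)$, Remark 1 after Theorem \ref{thm:main} makes the condition $Q \in \FF(L^{p'}(\R^d))$ automatic, so Theorem \ref{thm:main} applies to $Q$. Combining its two inequalities with Plancherel's identity $\| Q^\sharp \|_{L^2} = \| Q \|_{L^2}$ (by equimeasurability of the Schwarz rearrangement) and the Gagliardo--Nirenberg inequality applied to $Q^\sharp$ itself produces the chain
$$
\| Q \|_{L^p} \leq \| Q^\sharp \|_{L^p} \leq \Copt_{d,s,p} \| (-\Delta)^{s/2} Q^\sharp \|_{L^2}^\theta \| Q^\sharp \|_{L^2}^{1-\theta} \leq \Copt_{d,s,p} \| (-\Delta)^{s/2} Q \|_{L^2}^\theta \| Q \|_{L^2}^{1-\theta}.
$$
Because $Q$ saturates \eqref{ineq:GN}, the right-most term equals $\| Q \|_{L^p}$, so every inequality in this chain must be an equality; in particular both inequalities in \eqref{ineq:main} are saturated at $f = Q$.

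To invoke the equality statement of Theorem \ref{thm:main}, I still need the continuity of $\widehat{Q}$. The usual variational argument shows that any optimizer satisfies an Euler--Lagrange equation of the form
$$
(-\Delta)^s Q + \lambda_1 Q = \lambda_2 |Q|^{p-2} Q
$$
with constants $\lambda_1, \lambda_2 > 0$, equivalently $Q = \lambda_2 \bigl((-\Delta)^s + \lambda_1\bigr)^{-1}(|Q|^{p-2} Q)$. Since $p$ is an even integer, $|Q|^{p-2} Q$ is a polynomial in $Q$ and $\overline{Q}$, so a standard $L^q$-bootstrap based on the pointwise decay and smoothing properties of the Bessel-type resolvent kernel of $(-\Delta)^s + \lambda_1$ promotes the initial information $Q \in H^s(\R^d) \cap L^p(\R^d)$ to $Q \in L^1(\R^d) \cap L^\infty(\R^d)$. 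By the Riemann--Lebesgue lemma, $\widehat{Q}$ is then continuous (and vanishes at infinity).

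With both equalities in \eqref{ineq:main} and the continuity of $\widehat{Q}$ in hand, the equality statement of Theorem \ref{thm:main} produces constants $\alpha \in \R$ and $x_0 \in \R^d$ with $Q(x) = e^{i \alpha} Q^\sharp(x - x_0)$ almost everywhere. Since $Q^\sharp$ is radial and real-valued by construction, this is exactly the desired conclusion.

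The main obstacle I anticipate is verifying the continuity hypothesis on $\widehat{Q}$. For integer $s = m$ it is immediate from classical elliptic regularity applied to the polyharmonic Euler--Lagrange equation, but for general fractional $s > 0$ the bootstrap has to be carried out carefully, exploiting the fact that the kernel of $\bigl((-\Delta)^s + \lambda_1\bigr)^{-1}$ is integrable and has algebraic local singularity together with fast decay at infinity; the polynomial structure of $|Q|^{p-2} Q$ for even $p$ is what makes this bookkeeping tractable.
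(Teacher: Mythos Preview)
Your approach is essentially the paper's: apply Theorem \ref{thm:main} with $L=(-\Delta)^s$, force both inequalities in \eqref{ineq:main} to be equalities via the optimality of $Q$, and then invoke the equality clause once $\widehat{Q}$ is known to be continuous. The only difference is in the continuity step, which you flag as the main obstacle and propose to handle by bootstrapping $Q$ into $L^1\cap L^\infty$; the paper gets there in one line instead: since $p$ is an even integer with $p>2$ one has $p\ge 4$, hence $p-1\in[2,p]$ and $|Q|^{p-2}Q\in L^{1}(\R^d)$ directly by interpolation from $Q\in L^2\cap L^p$, so $\widehat{|Q|^{p-2}Q}$ is continuous and $\widehat{Q}(\xi)=(|2\pi\xi|^{2s}+1)^{-1}\widehat{|Q|^{p-2}Q}(\xi)$ is continuous as well---no resolvent bootstrap is needed.
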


\begin{remarks*}
{\em
1) In the case $s \leq 1$, the fact that optimizers $Q(x)$ can be chosen real-valued (up to a constant phase) can be deduced from classical inequality
$$
\| (-\DD)^{s/2} |f| \|_{L^2} \leq \| (-\DD)^{s/2} f \|_{L^2},
$$
which is valid only if $s \in [0,1]$. This property can also be seen as the so-called {\em first Deny--Beurling criterion} saying that the corresponding heat semigroup $e^{-t (-\DD)^{s}}$ is {\em positivity preserving} if and only if $s \in [0,1]$ holds; see, e.\,g.,~\cite[Appendix 1 to Section XIII.12]{ReSi-72}. However, for $s > 1$, no such argument can be applied to deduce real-valuedness of $Q$ (up to a constant phase). As an alternative to Fourier rearrangements,  the real-valuedness of optimizers for \eqref{ineq:GN} can also be inferred from arguments in \cite{Frank2016}[Section 6]; see \cite{Frank2018} for an adaptation, which  also applies to  non-even integer $p>2$. 

2) In particular, we obtain radial symmetry of optimizers for \eqref{ineq:GN} in the {\em biharmonic case}, where the corresponding Euler-Lagrange equation for $Q \in H^s(\R^d)$ is given by
\be \label{eq:bi_NLS}
 \Delta^2 Q + Q - |Q|^{p-2} Q = 0 \quad \mbox{in $\R^d$}.
\ee

3) For $s=1$, the radial symmetry of $Q(x)$ (and its uniqueness up to symmetries) is a classical fact based on the moving plane method \cite{GiNiNi-79} and uniqueness of radial ground states for $s=1$ (see \cite{Kw-89}). We refer to \cite{FrLe-13, FrLeSi-16} for radial symmetry and uniqueness of optimizers in the fractional case $0 < s < 1$. However, the methods for the range $0 < s \leq 1$ do not have known generalization to $s > 1$.

4) We can show that $\widehat{Q} \in L^1(\R^d)$ by using the Euler-Lagrange equation. Thus we see that $Q^\sharp$ is (up to translation and phase) always a {\em positive definite} function in the sense of Bochner. That is, for any choice of points $x_1, \ldots, x_N \in \R^d$, the $N \times N$-matrix $(Q(x_k-x_l))_{1 \leq k,l \leq N}$ is positive semi-definite. In particular, it holds that
\be \label{ineq:boch}
Q(0) \geq |Q(x)| \quad \mbox{for all $x \in \R^d$}.
\ee
In the biharmonic case $s=2$, numerical work in \cite{FiIlPa-02} indicates that the corresponding Euler-Lagrange equation \eqref{eq:bi_NLS} has (at least) {\em two} different radially symmetric real-valued solutions $Q \not \equiv 0$ in $H^2(\R^d)$. But only one of them satisfies condition \eqref{ineq:boch}. We believe that the property of positive definiteness (and perhaps further properties of $\widehat{Q} \geq 0$ such as showing that $\xi \mapsto \widehat{Q}(\xi)$ is log-concave) may be play a central role when proving {\em uniqueness} of optimizers for the general Gagliardo--Nirenberg inequality \eqref{ineq:GN}.
}

\end{remarks*}

\begin{proof}[Proof of Theorem \ref{thm:GN}]
Let $Q \in H^s(\R^d)$ be an optimizer for \eqref{ineq:GN}. Since $p < p_\star$ is $H^s$-subcritical, an application of H\"older's inequality in Fourier space shows that $\widehat{Q} \in L^{p'}(\R^d)$ with $1/p+1/p'=1$. Thus we can apply Theorem \ref{thm:main} to deduce that the Fourier rearrangement $Q^\sharp$ is an optimizer as well and we must have the equalities $\| (-\Delta)^{\frac s 2} Q^\sharp \|_{L^2}  = \| (-\Delta)^{\frac s 2} Q\|_{L^2}$ and $\| Q ^\sharp \|_{L^p} = \| Q \|_{L^p}$. To conclude the proof of Theorem \ref{thm:GN}, it remains to show that $\widehat{Q}$ is continuous. Indeed, after rescaling $Q(x) \to a Q(bx)$ with some constants $a, b > 0$, we find that $Q$ solves the corresponding Euler--Lagrange equation given by
\be \label{eq:EL_GNS}
(-\Delta)^s Q + Q - |Q|^{p-2} Q = 0 \quad \mbox{in $\R^d$.}
\ee
Since $p > 2$ is an even integer, we obviously have $p \geq 4 \geq 3$. Therefore, it follows that the function $F = |Q|^{p-2} Q \in L^1(\R^d)$ by H\"older's inequality and the fact that $Q \in L^2(\R^d) \cap L^{p}(\R^d)$. Thus the Fourier transform $\widehat{F}=\widehat{( |Q|^{p-2} Q)}$ is continuous and we deduce from the Euler-Lagrange equation in Fourier space that $\widehat{Q}(\xi) = (|2 \pi \xi|^{2s} + 1)^{-1} \widehat{F}(\xi)$ is continuous as well.

Thus we can apply  Theorem \ref{thm:main} to complete the proof of Theorem \ref{thm:GN}.
\end{proof}

\begin{remark*}{\em 
In the case $2 < p < 3$, any solution $Q \in H^s(\R^d)$ of \eqref{eq:EL_GNS} also has the property that $\widehat{Q}$ is continuous. However, the proof would require more sophistication than the case above when $p \geq 3$. A possible way to prove that  $\widehat{Q} \in C^0$ would be to show that $Q \in L^1(\R^d)$, which can be deduced by suitable spatial decay estimates for $Q(x)$. }
\end{remark*}

\subsection{Ground States for NLS Type Equations of Arbitrary Order}

As our next application of Theorem \ref{thm:main}, we discuss radial symmetry of ground state solutions for time-independent NLS type equations with derivative of arbitrary order. For example, the fourth-order equation for $Q \in H^2(\R^d)$ solving
\be \label{eq:fNLS}
\DD^2 Q - \beta \DD Q + \omega Q - |Q|^{p-2} Q = 0 \quad \mbox{in $\R^d$},
\ee
with parameters $\beta \geq 0$ and $\omega > 0$ provides solitary wave solutions for the {\em fourth-order NLS} (see e.\,g.~\cite{BoCaGoJe-18,BoLe-17,FiIlPa-02} and references given there) and the corresponding time-dependent equation arises as a model equation in nonlinear optics. The case of vanishing $\beta=0$ is referred to as the {\em biharmonic NLS}. Due to the general lack of a maximum principle for the biharmonic operator $\DD^2$ in $\R^d$ or Polya--Szeg\"o rearrangement principles in $H^2(\R^d)$, the question of radial symmetry of  {\em ground state} solutions $Q \in H^2(\R^d)$ for \eqref{eq:fNLS} has been mainly open so far; see below for some known partial results in this direction. However, if we assume that $p \in 2\N$, then Theorem \ref{thm:main} will show that any ground state solution $Q$ must be radially symmetric and real-valued (up to translation and constant phase).  In what follows, we will consider a general class of operators including $\DD^2 - \beta \DD$ with $\beta \geq 0$. See, e.\,g., \cite{We-87, We-87b} where NLS type equations with general dispersions where introduced.

Let $d \in \N$, $s> 0$, and $2 < p < p_*(s,d)$. We suppose that $L$ is a (pseudo-)differential operator that satisfies Assumption \ref{ass:L}. For a given real number $\omega > 0$, we define the action functional $A : H^s(\R^d) \to \R$ by setting
\be
A(u) = \frac 1 2 \langle u, L u \rangle + \frac{\omega}{2} \| u \|_{L^2}^2 - \frac{1}{p} \| u \|_{L^{p}}^{p} .
\ee
The corresponding Euler--Lagrange equation $A'(Q)=0$ is easily found to be
\be 
L Q + \omega Q - |Q|^{p-2} Q = 0 \quad \mbox{in $\R^d$.}
\ee
By definition, a ground state $Q \in H^s(\R^d)$, $Q \not \equiv 0$ is a minimizer of $A(u)$ among all its (non-trivial) critical points. That is, the set of ground states for $A(u)$ can be written as
\be
\mathcal{G} = \big \{ u \in H^s(\R^d)\setminus \{ 0 \}: \mbox{$A(u) \leq A(v)$ for all $v \in H^s(\R^d) \setminus \{ 0 \}$ with $A'(v) = 0$}  \big \}.
\ee
It is elementary to check that ground states for $A(u)$ can be equivalently described by considering the minimization problem
\be \label{def:min_m}
\mathfrak{m} = \inf \left \{ \frac{1}{2} \langle v, L v \rangle + \frac{\omega}{2} \| v \|_{L^2}^2 : v \in H^s(\R^d), \; \| v \|_{L^p}=1 \right \} .
\ee
Indeed, it is straightforward to verify that $u \in H^2(\R^d) \setminus \{ 0 \}$ is a ground state for $A(u)$ if and only if $v = u / \| u \|_{L^p}$ is a minimizer for \eqref{def:min_m}.

We have the following symmetry result.

\begin{theorem} \label{thm:gs}
Let $d \in \N$, $s>0$, $\omega > 0$, and suppose $2 < p <  p_*(s,d)$ is an even integer. Let $L$ satisfy Assumption \ref{ass:L} and assume in addition that its symbol $\omega : \R^n \to \R$ is continuous. Then any ground state $Q \in \mathcal{G}$ for the action functional $A(u)$ satisfies
$$
Q(x) = e^{i \alpha} Q^\sharp(x-x_0) \quad \mbox{for a.\,e.~$x \in \R^d$},
$$
with some constants $\alpha \in \R$ and $x_0 \in \R^d$. Consequently, the function $Q(x)$ is radially symmetric and real-valued (up to translation and constant phase).
\end{theorem}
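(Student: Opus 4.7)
The plan is to mirror the proof of Theorem \ref{thm:GN}, using the equivalent minimization problem \eqref{def:min_m}: for a ground state $Q \in \mathcal{G}$, the normalized function $v := Q/\|Q\|_{L^p}$ is a minimizer of $\mathfrak{m}$, which I would then compare to $\tilde v := Q^\sharp/\|Q^\sharp\|_{L^p}$. In order to invoke the equality statement of Theorem \ref{thm:main}, I first need continuity of $\widehat{Q}$. This follows from the Euler--Lagrange equation $LQ + \omega Q = |Q|^{p-2}Q$: since $p \geq 4$ is an even integer, interpolation between $Q \in L^2$ and $Q \in L^p$ (the latter by Sobolev embedding) gives $Q \in L^{p-1}$, hence $|Q|^{p-2}Q \in L^1(\R^d)$ and $\widehat{|Q|^{p-2}Q}$ is continuous by Riemann--Lebesgue. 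The Fourier-side identity
\[
\widehat{Q}(\xi) = \frac{\widehat{|Q|^{p-2}Q}(\xi)}{\omega(\xi) + \omega},
\]
together with the assumed continuity of the symbol $\omega(\xi)$ and the strictly positive lower bound $\omega(\xi) + \omega \geq \omega > 0$, then yields that $\widehat{Q}$ is continuous on $\R^d$.

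Next I would apply Theorem \ref{thm:main} to $Q$---the hypothesis $Q \in \FF(L^{p'}(\R^d))$ is automatic since $p$ is $H^s$-subcritical---to obtain
\[
\langle Q^\sharp, L Q^\sharp\rangle \leq \langle Q, L Q\rangle, \qquad \|Q\|_{L^p} \leq \|Q^\sharp\|_{L^p},
\]
along with $\|Q^\sharp\|_{L^2} = \|Q\|_{L^2}$ from Plancherel. Writing $a = \|Q\|_{L^p}$, $b = \|Q^\sharp\|_{L^p}$, $M = \|Q\|_{L^2}^2$, $T = \langle Q, LQ\rangle$ and $T' = \langle Q^\sharp, LQ^\sharp\rangle$, the admissibility of $\tilde v$ contrasted with the minimality of $v$ forces $(T' + \omega M)/b^2 \geq (T + \omega M)/a^2$, while $T' \leq T$ and $a \leq b$ give the reverse inequality. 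Hence both sides coincide; since $T + \omega M \geq \omega M > 0$, one can cancel this common factor to deduce $a = b$ and $T' = T$ \emph{separately}, i.\,e., equality holds in both rearrangement inequalities of Theorem \ref{thm:main}. Combined with the continuity of $\widehat{Q}$, the equality case of Theorem \ref{thm:main} then produces $Q(x) = e^{i\alpha}Q^\sharp(x-x_0)$ for some $\alpha \in \R$ and $x_0 \in \R^d$, completing the proof.

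The main obstacle is precisely the decoupling step above: a priori, a gain $b > a$ could compensate a loss $T' < T$ inside the $\mathfrak{m}$-ratio, and ruling this out relies crucially on the strict positivity of the mass term $\omega M > 0$, which is automatic from $Q \not\equiv 0$ and $\omega > 0$. The preliminary continuity of $\widehat{Q}$ is standard once the Euler--Lagrange equation is at hand, and checking the admissibility hypotheses of Theorem \ref{thm:main} is routine throughout the $H^s$-subcritical range of $p$.
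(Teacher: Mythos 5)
Your proposal is correct and follows essentially the same route as the paper: normalize $Q$ in $L^p$, exploit minimality of $v$ together with the two non-strict inequalities from Theorem~\ref{thm:main} to force joint equality, establish continuity of $\widehat Q$ from the Euler--Lagrange equation (using $|Q|^{p-2}Q\in L^1$ since $p\ge 4$, Riemann--Lebesgue, and the strictly positive continuous symbol $\omega(\xi)+\omega$), and then invoke the equality case. The only cosmetic difference is in the decoupling step: the paper rescales $w=\alpha v^\sharp$ with $\alpha=\|v\|_{L^p}/\|v^\sharp\|_{L^p}$ and derives a contradiction via $T(w)<\mathfrak m$, while you compare the Rayleigh-type quotients directly and cancel the common positive factor $T+\omega M$; both hinge identically on $\omega\|Q\|_{L^2}^2>0$, so this is the same argument dressed slightly differently.
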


\begin{remarks*}
{\em 1) For $L=(-\Delta)^s$, it is elementary to check that a ground state $Q \in \mathcal{G}$ is also an optimizer for the Gagliardo--Nirenberg inequality (GN). Conversely, any optimizer $Q$ for (GN) is (after a suitable rescaling) also a ground state for $A(u)$ when $L=(-\Delta)^s$.

2) For $L=\DD^2 - \beta \DD$ with $\beta \geq 2\sqrt{\omega}$, is was shown in\cite{BoNa-15} that $Q$ is radially symmetric and positive $Q(x) > 0$ (modulo translation and a constant factor). In this case, the fourth-order equation can be written as a corporative system of two second-order equations so that the maximum-principle type arguments can be applied. Thus, in some sense, the Laplacian part $-\beta \Delta$ ``dominates'' the biharmonic operator $\DD^2$ for large $\beta$ and ground states $Q$ essentially behave as in the second-order case. However, in the pure biharmonic case when $\beta=0$, any non-trivial solution $Q \in H^2(\R^d)$ of \eqref{eq:fNLS} must be sign-changing, which displays a completely different behavior than in the case $\beta \geq 2 \sqrt{\omega}$.}
\end{remarks*}

\begin{proof}
Not surprisingly, the proof follows closely the arguments given for Theorem \ref{thm:GN} above. First, we recall if $Q$ is a ground state then the normalized function
$$
v = \frac{1}{ \| Q \|_{L^p}} Q
$$ 
is a minimizer for problem \eqref{def:min_m}. Next, we define the quadratic functional
$$
T(v) = \frac{1}{2} \langle v, L v\rangle + \frac{\omega}{2} \| v \|_{L^2}^2 \quad \mbox{for $v \in H^s(\R^d)$.}
$$ 
From Theorem \ref{thm:main} together with $\| v^\sharp \|_{L^2} = \| v \|_{L^2}$, we deduce that
$$
T(v^\sharp) \leq T(v) \quad \mbox{and} \quad \| v \|_{L^p} \leq \| v^\sharp \|_{L^p}.
$$
We claim that equality $1=\| v \|_{L^p} = \| v^\sharp \|_{L^p}$ must hold. Suppose this was false and that we had $\alpha = \| v \|_{L^p}/ \| v^\sharp \|_{L^p} < 1$. But then $w = \alpha v^\sharp$ would satisfy $T(w) = \alpha^2 T(v^\sharp) \leq \alpha^2 T(v) < T(v) = \mathfrak{m}$, contradicting $T(w) \geq \mathfrak{m}$ because of $\| w \|_{L^p} = 1$. Thus the equality $\| v \|_{L^p} = \| v^\sharp \|_{L^p}$ must hold and a-posteriori we must have equality $T(v) = T(v^\sharp)$ as well, proving that $v^\sharp$  also minimizes \eqref{def:min_m}.

It remains to show that
\be \label{eq:v_good}
v(x) = e^{i \alpha} v^\sharp(x-x_0) \quad \mbox{for a.\,e.~$x \in \R^d$},
\ee
with some constants $\alpha \in \R$ and $x_0 \in \R^d$, which follows from Theorem \ref{thm:main} once we know hat $\widehat{v}$ is continuous. But the continuity of $\widehat{v}$ follows in an analogous fashion as in the proof of Theorem \ref{thm:GN} using also that $L$ has a continuous symbol $\omega(\xi)$ by assumption. Thus we conclude that \eqref{eq:v_good} holds. 

Finally, we deduce that $Q(x) = e^{i \alpha} Q^\sharp(x-x_0)$ for a.\,e.~$x \in \R^d$ from \eqref{eq:v_good} and using the simple fact that $(\mu v)^\sharp = |\mu| v^\sharp$ for any constant $\mu \in \C$.
\end{proof}

\subsection{Adams--Moser--Trudinger Type Inequalities in $\R^d$.}

As a further application of Theorem \ref{thm:main}, we will show radial symmetry for maximizers of {\bf Adams--Moser--Trudinger} type inequalities for the Sobolev spaces $H^{d/2}(\R^d)$ in any dimension $d \in \N$. More precisely, by following e.\,g.~\cite{Ru-05,LiRu-08,RuSa-13,LaLu-13}, we can obtain that
\be \label{def:S}
S_d(\alpha) := \sup_{{u \in H^{d/2}(\R^d) \atop \| u \|_{H^{d/2}} \leq 1}} \int_{\R^d} ( e^{\alpha|u|^2} - 1) \, dx = \begin{dcases*}\mbox{finite} & \mbox{for $0 < \alpha \leq \alpha_*$}, \\ +\infty & \mbox{for $\alpha > \alpha_*$} \end{dcases*}
\ee  
with some constant $\alpha_* = \alpha_*(d) > 0$. Here and in what follows, we choose the Sobolev norm so that
$$
\| u \|_{H^{d/2}}^2 = \| (-\DD)^{d/4} u \|_{L^2}^2 + \| u\|_{L^2}^2.
$$
The following discussion can also be extended to different choices for the norm $\| u \|_{H^{d/2}}$ and the relevant changes are left to the reader.

In $d=2$ dimensions, it shown in \cite{Is-11} that $\alpha_*(2)=4 \pi$ and that $S_{d=2}(\alpha_*)$ is attained (by making use of classical rearrangement techniques among other things; see also \cite{Ru-05}). For the existence of a critical constant $\alpha_*(d)> 0$ and its explicit value in any dimension $d$, we refer to \cite{LaLu-13}[Theorem 1.7] (although a different choice of $\| u \|_{H^s}$ is used and hence the numerical value of $\alpha_*(d)$ may have to be changed to our choice here.)

Now in view of Theorem \ref{thm:main}, we use the exponential series and monotone convergence theorem to find
\be \label{eq:exp}
\int_{\R^d} ( e^{\alpha |u|^2 } - 1) \, dx = \sum_{n=1}^\infty \frac{\alpha^n}{n!} \| u \|_{L^{2n}}^{2n}.
\ee
Thus we are in a situation where Theorem \ref{thm:main} may be used to deduce symmetry of maximizers for $S_d(\alpha)$. Indeed, we have the following general result.

\begin{theorem} \label{thm:ATM}
Let $d \in \N$ and $\alpha > 0$. Then every maximizer $Q \in H^{d/2}(\R^d)$ for $S_d(\alpha)$ has the form
$$
Q(x) = e^{i \alpha} Q^\sharp(x-x_0)
$$
with some constants $\alpha \in \R$ and $x_0 \in \R^d$. In particular, the function $Q(x)$ is radially symmetric and real-valued (up to translation and constant phase).
\end{theorem}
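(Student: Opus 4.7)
My plan is to feed the $L^p$-rearrangement inequalities of Theorem \ref{thm:main} term by term into the exponential series \eqref{eq:exp}, and then use maximality of $Q$ to upgrade every inequality to equality, putting us in a position to invoke the equality case of Theorem \ref{thm:main}. To begin, I would observe that any maximizer $Q$ must saturate the Sobolev constraint, $\|Q\|_{H^{d/2}} = 1$, because otherwise $\lambda Q$ with $\lambda > 1$ chosen so that $\|\lambda Q\|_{H^{d/2}} = 1$ would still be admissible and the pointwise monotonicity of $t \mapsto e^{\alpha t^2}-1$ would strictly raise the value of $S_d(\alpha)$, a contradiction.

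Next, I would compare $Q$ with its Fourier rearrangement $Q^\sharp$. Plancherel gives $\|Q^\sharp\|_{L^2} = \|Q\|_{L^2}$, and \eqref{ineq:polya_fourier} with $s = d/2$ gives $\|(-\DD)^{d/4}Q^\sharp\|_{L^2} \leq \|(-\DD)^{d/4}Q\|_{L^2}$; together these show $\|Q^\sharp\|_{H^{d/2}} \leq 1$, so $Q^\sharp$ is admissible. For each integer $n \geq 2$, the operator $L = (-\DD)^{d/2}$ satisfies Assumption \ref{ass:L} with $s = d/2$, and the exponent $p = 2n$ is $H^{d/2}$-subcritical since $p_*(d/2,d) = \infty$; thus the condition $\widehat{Q} \in L^{p'}$ is automatic and Theorem \ref{thm:main} delivers $\|Q\|_{L^{2n}}^{2n} \leq \|Q^\sharp\|_{L^{2n}}^{2n}$. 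Substituting these term-wise bounds into \eqref{eq:exp} yields $\int_{\R^d}(e^{\alpha|Q|^2} - 1)\,dx \leq \int_{\R^d}(e^{\alpha|Q^\sharp|^2} - 1)\,dx$, and since $Q$ is a maximizer while $Q^\sharp$ is admissible, both sides must coincide. Equality of these non-negative series then forces $\|Q\|_{L^{2n}} = \|Q^\sharp\|_{L^{2n}}$ for every integer $n \geq 2$, and a rescaling argument identical to the one in the first paragraph (now applied to $Q^\sharp$) forces $\|Q^\sharp\|_{H^{d/2}} = 1$, hence $\|(-\DD)^{d/4}Q\|_{L^2} = \|(-\DD)^{d/4}Q^\sharp\|_{L^2}$.

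At this point both inequalities of \eqref{ineq:main} are saturated for $L = (-\DD)^{d/2}$ and $p = 4$, and only the continuity of $\widehat{Q}$ remains to be established in order to invoke the equality case of Theorem \ref{thm:main}. Any maximizer satisfies an Euler--Lagrange equation of the form $((-\DD)^{d/2} + 1)Q = c\,Q e^{\alpha|Q|^2}$ with Lagrange multiplier $c > 0$. Pairing against $Q$ gives $c \int |Q|^2 e^{\alpha|Q|^2}\,dx = \|Q\|_{H^{d/2}}^2 = 1$, while the Moser--Trudinger bound ensures $\int e^{\alpha|Q|^2}\,dx < \infty$; Cauchy--Schwarz then yields $Q e^{\alpha|Q|^2} \in L^1(\R^d)$, so its Fourier transform is continuous, and the Fourier-side identity $\widehat{Q}(\xi) = c\,\widehat{Q e^{\alpha|Q|^2}}(\xi)/(|2\pi\xi|^d + 1)$ shows that $\widehat{Q}$ is continuous as well. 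The equality case of Theorem \ref{thm:main} then delivers $Q(x) = e^{i\alpha} Q^\sharp(x - x_0)$ for a.e.\ $x \in \R^d$, finishing the proof. I expect the only genuinely delicate step to be the continuity of $\widehat{Q}$; the rest is essentially a bookkeeping argument built around the series \eqref{eq:exp}.
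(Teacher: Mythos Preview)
Your overall strategy matches the paper's: show $Q^\sharp$ is admissible, feed the $L^{2n}$ inequalities into the exponential series, use maximality to force equality everywhere, and invoke the equality case of Theorem~\ref{thm:main}. The problem is your argument for the continuity of $\widehat{Q}$, which is indeed (as you anticipated) the delicate step.

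Your Cauchy--Schwarz bound fails on $\R^d$: the Moser--Trudinger inequality gives $\int_{\R^d}(e^{\alpha|Q|^2}-1)\,dx<\infty$, \emph{not} $\int_{\R^d}e^{\alpha|Q|^2}\,dx<\infty$; the latter is always infinite since $|\R^d|=\infty$. So the estimate $\int|Q|e^{\alpha|Q|^2}\leq\bigl(\int|Q|^2e^{\alpha|Q|^2}\bigr)^{1/2}\bigl(\int e^{\alpha|Q|^2}\bigr)^{1/2}$ gives nothing, and in fact there is no a~priori reason for $Qe^{\alpha|Q|^2}$ to lie in $L^1$: writing $Qe^{\alpha|Q|^2}=Q+Q(e^{\alpha|Q|^2}-1)$, the second summand is in $L^1$ but $Q$ itself need not be. Consequently your Fourier-side formula $\widehat{Q}(\xi)=c\,\widehat{Qe^{\alpha|Q|^2}}(\xi)/(|2\pi\xi|^d+1)$ does not yield continuity.

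The paper's fix (Lemma~\ref{lem:AMT}) is to rewrite the Euler--Lagrange equation as $(-\Delta)^{d/2}Q+(1-\lambda)Q=F$ with $F=\lambda Q(e^{\alpha|Q|^2}-1)$; now $F\in L^1$ is genuinely available via H\"older, but the price is that the Fourier multiplier becomes $|2\pi\xi|^d+1-\lambda$, which could vanish unless one proves $\lambda<1$. That inequality is the substance of Lemma~\ref{lem:AMT}: one shows $Q^\sharp$ is also a maximizer with the \emph{same} Lagrange multiplier, then works with $\widehat{Q^\sharp}\geq 0$ and the convolution structure of $\widehat{F}$ to show $\widehat{F}>0$ everywhere; this forces $\lambda\leq 1$, and $\lambda=1$ is excluded because it would make $\widehat{Q^\sharp}(\xi)\sim|\xi|^{-d}$ near $0$, contradicting $\widehat{Q^\sharp}\in L^2$. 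Once $\lambda<1$ is in hand, the continuity of $\widehat{Q}$ follows immediately.
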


\begin{remarks*} {\em 
1) Since functions in $H^{d/2}(\R^d)$ are in general complex-valued, the assertion that maximizers $Q$ can be chosen real-valued (up to a constant phase) is non-trivial (in particular, for $d \geq 3$ when $u \in H^{d/2}(\R^d)$ does not guarantee that $|u| \in H^{d/2}(\R^d)$.)

2) As in the case of optimizers for \eqref{ineq:GN}, it is not hard to see that $\widehat{Q} \in L^1(\R^d)$ by using arguments in the spirit of the proof of Lemma \ref{lem:AMT} below. Thus $Q^\sharp : \R^d \to \R$ is a continuous positive definite function.

3) We hope that the Fourier rearrangement techniques presented here will also prove to be useful for showing the existence of minimizers for suitable $\alpha \in (0,\alpha_*]$ and any dimension $d \geq 1$.

4) For low dimensions $d \in  \{ 1,2 \}$, we can apply classical rearrangement techniques and maximum principle arguments to show that minimizers $Q$ must be non-negative (up to a constant phase). It would be interesting to prove (or disprove) that $Q$ can have sign changes in dimension $d \geq 3$.
}
\end{remarks*}

\begin{proof}[Proof of Theorem \ref{thm:ATM}]
Suppose $Q \in H^{d/2}(\R^d)$ is a maximizer for $S_d(\alpha)$. Clearly, we must have $\| Q \|_{H^{d/2}} = 1$. Since $Q \in H^{d/2}(\R^d)$ implies $\widehat{Q} \in L^{q}(\R^d)$ for any $1 < q \leq 2$, we can apply Theorem \ref{thm:main} and use \eqref{eq:exp} to conclude that
\be \label{eq:exp_S}
\int_{\R^d} ( e^{\alpha |Q|^2} - 1 ) \, dx \leq \int_{\R^d} ( e^{\alpha |Q^\sharp|^2 } - 1) \, dx.
\ee
On the other hand, we have that $\| Q^\sharp \|_{H^{d/2}} \leq \| Q \|_{H^{d/2}} = 1$ by Theorem \ref{thm:main}. Thus we conclude that $Q^\sharp$ is also a maximizer  and that we equality in \eqref{eq:exp_S} and $\| (-\DD)^{d/4} Q^\sharp \|_{L^2} = \| (-\DD)^{d/4} Q \|_{L^2}$.

To complete the proof by using Theorem \ref{thm:main}, we need to show that $\widehat{Q}$ is continuous. But this property follows from Lemma \ref{lem:AMT} below, where  the Euler-Lagrange equation for $Q$ is studied in Fourier space.
\end{proof}

\section{Some Possible Extensions and Remarks}

We mention here some directions in which the use of the Fourier rearrangement and the arguments in this paper can be extended, followed by some remarks on exponents $p \geq 2$ that are not even integers.

\subsection*{Weighted $L^p$-Norms} For $1 \leq p < \infty$ and $d \geq 1$, we define the weighted norms 
$$
\| f \|_{\dt{L}^{p}_\alpha} = \left ( \int_{\R^d} \frac{|f(x)|^p}{|x|^\alpha}  \, dx \right )^{1/p} \quad \mbox{with $0 < \alpha < d$},
$$
$$
\| f \|_{L^{p}_\alpha} =  \left ( \int_{\R^d} \frac{|f(x)|^p}{(1+|x|^2)^{\alpha/2}} \, dx \right )^{1/p} \quad \mbox{with $0 < \alpha < \infty$}.
$$
Indeed, a corresponding version of Theorem \ref{thm:main} remains true if $\| f \|_{L^p}$ is replaced by $\| f \|_{\dt{L}^{p}_\alpha}$ or $\| f \|_{L^{p}_\alpha}$, provided that $p \in 2 \N$ is an even integer. This is due to the fact that our arguments below will carry over in this case. Indeed, by using the Fourier transform we have, for $p \in 2 \N$,
$$
\| f \|_{\dt{L}^{p}_\alpha}^p =  \left  ( R_\alpha \ast \wf \ast \wcf  \ast \ldots \wf \ast \wcf \right )(0), \quad  \| f \|_{L^{p}_\alpha}^p =  \left ( G_\alpha \ast  \wf \ast \wcf  \ast \ldots \wf \ast \wcf \right )(0),
$$
with $p \in 2 \N$ convolutions appearing on the right-hand sides. Here $R_\alpha$ and $G_\alpha$ denote the kernels of the  corresponding Riesz and Bessel potentials, respectively. We recall the classical formulas
$$
R_\alpha(\xi) =  \frac{c_{\alpha,d}}{|\xi|^{d-\alpha}} \quad \mbox{and} \quad G_\alpha(\xi) = d_{\alpha, d} \int_{0}^\infty e^{-|\xi|^2/4 \pi \delta} e^{-\delta /4 \pi} \delta^{(-d + \alpha)/2} \frac{d \delta}{\delta} ,
$$
with some positive constants $c_{\alpha,d} > 0$ and $d_{\alpha,d} > 0$.  Since $R_\alpha=R_\alpha^*$ and $G_\alpha=G_\alpha^*$ are equal to their symmetric-decreasing rearrangements, we can directly apply the proof of Lemma \ref{lem:BLL} (using the Brascamp--Lieb--Luttinger inequality) to conclude that 
$$
\| f \|_{\dt{L}^{p}_\alpha} \leq \| f^\sharp \|_{\dt{L}^{p}_\alpha} \quad \mbox{and} \quad \| f \|_{L^{p}_\alpha} \leq \| f^\sharp \|_{L^{p}_\alpha}
$$
under some appropriate conditions on $f$ (e.\,g., $f$ belongs to $\mathcal{S}(\R^d)$). Furthermore, the classification of the equality case in Theorem \ref{thm:main} (assuming that $\wf$ is continuous) carries over to the weighted norms $\| f \|_{\dt{L}^{p}_\alpha}$ and $\| f \|_{L^{p}_\alpha}$ and equality occurs if and only if $f(x) = e^{i \alpha} f^\sharp(x)$ almost everywhere. (Note that no translation occurs since the weighted norms break translational invariance.) 


Finally, the above discussion can be readily generalized to weights other than $w(x) = |x|^{-a}$ or $w(x) = (1+|x|^2)^{-a/2}$. The only assumption to be made is that the weight function $w(x) \geq 0$ has a Fourier transform $\widehat{w} = \widehat{w}^*$ that is equal to its symmetric-decreasing rearrangement.

\subsection*{Generalized Choquard--Hartree Norms} For $1 \leq p < \infty$ and $0 <\alpha < d$, we consider the family of Choquard--Hartree type norms
$$
\| f \|_{D^{\alpha,p}} = \left ( \iint_{\R^d \times \R^d} \frac{|f(x)|^{p} |f(y)|^{p}}{|x-y|^\alpha} \, dx \, dy \right )^{1/2p}.
$$ 
For $(p,\alpha,d)=(2,1,3)$, such an expression occurs e.\,g.~in the Choquard--Pekar model describing polarons (see, e.\,g., \cite{Li-77}); see also  \cite{MoVs-17} for a current overview on generalized Choquard--Hartree type energy functionals. In the case of even integer $p \geq 2$, the norms $\| f \|_{D^{\alpha,p}}$ are amenable to the Fourier arguments used below. Indeed, for $p \in 2\N$, we get
$$
\| f \|_{D^{\alpha,p}}^p = c_{\alpha,d} \int_{\R^d} \frac{\overline{F(\xi)} F(\xi)}{|\xi|^{d-\alpha} } \, d\xi, \quad \mbox{where} \ \ F(\xi) = ( \wf \ast \wcf \ast \ldots \wf \ast \wcf)(\xi)
$$
with $p-1\in \N$ convolutions in the definition of $F$. Likewise, we can deduce that
$$
\| f \|_{D^{\alpha,p}} \leq \| f^\sharp \|_{D^{\alpha,p} }
$$
for  all $f \in \mathcal{S}(\R^d)$, say. Again, we can obtain a classification result for the case of equality \`a la Theorem \ref{thm:main}. Clearly, the techniques laid out in this paper also apply to show radial symmetry of minimizers of Choquard--Hartree type energy functionals
$$
\mathcal{E}(u) = \frac 1 2 \langle u, L u \rangle - \frac{1}{2p} \| u \|_{D^{\alpha,p}}^{2p},
$$
with some (pseudo-)differential operator $L$ satisfying Assumption \ref{ass:L}.

\subsection*{Sobolev/H\"ormander--Beurling Type Norms} For the Sobolev spaces $H^{s,p}(\R^d)$ with $p \neq 2$, the Fourier rearrangement $f \mapsto f^\sharp$ does not seem to produce useful rearrangement estimates right away. However, we can consider the Sobolev type spaces $\widehat{H}^{s,p}(\R^d)$ with $1 \leq p \leq \infty$ and $s \in \R$ that are defined as the completion of $\mathcal{S}(\R^d)$ with respect to the norm
$$
\| f \|_{\widehat{H}^{s,p}} =  \| (1+|\cdot|^2)^{\frac s 2}\widehat{f}   \|_{L^{p'}} \quad \mbox{with} \quad \frac{1}{p} + \frac{1}{p'} = 1.
$$
Clearly, we have the equality $\widehat{H}^{s,2}(\R^d) = H^{s,2}(\R^d)$ by Plancherel's theorem and the continuous embedding $\widehat{H}^{s,p}(\R^d) \subset H^{s,p}(\R^d)$ for any $p \geq 2$ by the Hausdorff--Young inequality (as well as the reversed embedding $H^{s,p}(\R^d) \subset \widehat{H}^{s,p}(\R^d)$ for $p \leq 2$.) We remark that the spaces $\widehat{H}^{s,p}(\R^d)$ coincide with the spaces $B_{p',k}(\R^d)$ which can be found in H\"ormander's book \cite{Ho-83}[Section 10.1] if we take the weight function $k=(1+|\xi|^2)^{\frac s 2}$. The spaces $B_{p',k}(\R^d)$ are also referred to as {\em H\"ormander--Beurling spaces}. For an application of the spaces $\widehat{H}^{s,p}(\R^d)$ in the study of Cauchy problems for dispersive nonlinear PDEs, see \cite{CaVeVi-01}.

By using the arguments in this paper, we obtain the general inequality
\be
\| f^\sharp \|_{\widehat{H}^{s,p}} \leq \| f \|_{\widehat{H}^{s,p}}
\ee
valid for any $f \in \widehat{H}^{s,p}(\R^d)$, where  $1 \leq p < \infty$ and $s \geq 0$. Moreover, for  strictly positive $s > 0$, equality holds if and only if $|\wf| = (\wf)^*$ almost everywhere. These assertions all follow from a straightforward application of the proof of Lemma \ref{lem:L_rearrange} below. Again, a strict rearrangement principle in the sense of Theorem \ref{thm:main} can be proven when $\langle u, L u \rangle$ is replaced with $\| f \|_{\widehat{H}^{s,p}}$.



\subsection*{Non-Radial Fourier Multipliers} The ideas of the present paper can be extended to the case where rotational invariance of $L$ is weakened to cylindrical symmetry in $d \geq 2$ dimensions. More precisely, suppose the operator $L$ has a multiplier $\omega : \R^n \to \R$ that is {\em cylindrically symmetric} with respect to some direction $e \in \mathbb{S}^{d-1}$, i.\,e.
$$
\mbox{$\omega(R \xi) = \omega(\xi)$ for a.\,e.~$\xi \in \R^d$ and all rotations $R \in O(d)$ with $R e = e$.}
$$
By a global rotation of coordinates in $\R^d$, we may assume henceforth that $e=e_1 =(1,0,\ldots, 0) \in \mathbb{S}^{d-1}$.  Accordingly, we can define the {\bf partial Fourier rearrangement with respect to $e_1$} by setting
$$
f^{\sharp e_1} = \FF^{-1} \left \{ (\FF f)^{* e_1} \right \},
$$
where $g^{*e_1} : \R^d \to [0,\infty)$ denotes the function obtained by symmetric-decreasing rearrangement of $(x_2, \ldots, x_d) \mapsto g(x_1, x_2, \ldots, x_d)$ on $\R^{d-1}$ for each $x_1 \in \R$ fixed. It is easy to see that $f^{\sharp e_1}$ is cylindrically symmetric with respect to the $e_1$-axis.

In \cite{HiLeSo-18}, we will use this idea to prove cylindrical symmetry of {\em boosted ground states $Q_v \in H^s(\R^d)$} for general NLS of the form
$$
(-\Delta)^s Q + i v \cdot \nabla Q + \alpha Q - |Q|^{p-2} Q = 0 \quad \mbox{in $\R^d$},
$$
where $s \in[1/2, \infty)$, $d \geq 2$, $\alpha \in \R$, $2 < p < p_*(d,s)$, and $v \in \R^d$ (with $|v| < 1$ if $s=1/2$).

\subsection*{On Non-Even Integer Exponents $p$} It would be desirable to understand what can happen with $\| f \|_{L^p}$ under Fourier rearrangement for exponents $p \not \in  2 \N \cup \{ \infty \}$. For such exponents $p$, it can be shown that there exists $f \in \mathcal{S}(\R^d)$ satisfying the strict inequality
$$
\| \FF^{-1} ( |\FF f| ) \|_{L^p} < \| f \|_{L^p}.
$$
This fact is sometimes referred to as the failure of the upper Hardy-Littlewood majorant property for $L^p$-norms when $p \not \in 2 \N \cup \{\infty \}$. Indeed, the existence of such functions $f$ can be inferred from known counterexamples on the circle group $\T$ given by trigonometric polynomials $P \in L^2(\T)$; see \cite{Li-60, Bo-62, MoSc-09}. If we take $f(x) = \lambda^{1/2p} e^{-\pi \lambda x^2}P(x)$ and choose $0 < \lambda \ll 1$ sufficiently small, we can produce the desired examples $f \in \mathcal{S}(\R^d)$ for $d=1$. (Examples for $d \geq 2$ can be obtained in a similar way.) However, all the known examples do {\em not} satisfy $|\wf| = (\wf)^*$.  Hence it is still unclear whether counterexamples for the inequality $\| f \|_{L^p} \leq \| f^\sharp \|_{L^p}$ exist  at all for some $p \not \in 2 \N \cup \{ \infty \}$. 

Note that a relaxation of the above inequality is known to be true. In \cite{Montgomery76}, the author establishes the inequality $\| f\|_{L^p(\T)}\le 5\| f^{\sharp}\|_{L^p(\T)}$, $p>2$, where $f(x)\sim \sum_{n\ge 0} a_n\cos(2n\pi x)$ is an even function on $\big[-\tfrac{1}{2},\tfrac{1}{2}\big]_{\mathrm{per}}\simeq \T=\R/\mathbb{Z}$, and $f^{\sharp}(x)\sim \sum_{n\ge 0} a_n^*\cos(2n\pi x)$, where $a_0^*\ge a_1^*\ge \cdots$ is the decreasing reordering of the sequence $(|a_n|)_{n\ge 0}$. For an adaption of \cite{Montgomery76} from $\T$ to $\R^d$, see \cite{Frank2018}. Finally, we mention that the inequality $\| f \|_{L^p(\T)} \leq C_p \|  f^\sharp \|_{L^p(\T)}$ holds  with some constant $C_p \geq 1$ goes back to Hardy and Littlewood (see \cite[Chap.~XII,~Vol~II]{Zygmund_Trigo}). In \cite{Li-60}, Littlewood showed that we must have $C_p > 1$ when $2 <  p < \infty$ is not an even integer. But  it is unclear to us at the moment whether the result in \cite{Li-60} valid for $\T$ (with the discrete rearrangement of Fourier coefficients $a_n$) also carries over to $\R^d$ (with the symmetric-decreasing rearrangement of $\wf$).

\section{Preliminary Facts about the Fourier Rearrangement}

\label{sec:prelim}

In this section, we collect some basic properties of the Fourier rearrangement (but not all of them will be used in this paper). Furthermore, we will prove the non-strict versions of the inequalities stated in Theorem \ref{thm:main}.

\subsection{Basic Properties and Compactness} 
From the introduction above, we recall that for a function $f \in L^2(\R^d)$ we define its Fourier rearrangement as the function $f : \R^d \to \R$ to be given by
\be \label{def:f_sharp}
f^\sharp := \FF^{-1} \left \{ (\FF f)^* \right \} .
\ee
Here $\FF$ denotes the Fourier transform on $\R^d$ (with inverse $\FF^{-1}$), where we use the following convention 
\be
(\FF f)(\xi) \equiv \wf(\xi) = \int_{\R^d}  e^{-2 \pi i x \cdot \xi} f(x) \, dx
\ee
for any Schwartz function $f \in \mathcal{S}(\R^d)$. As usual, $\FF$ is extended to the space of tempered distributions $\mathcal{S}'(\R^d)$ by duality. For $q \in [1,\infty]$, we use $\FF (L^q(\R^d))$ to denote the space of all $f \in \mathcal{S}'(\R^d)$ such that $f=\wg$ for some $g \in L^q(\R^d)$. By the Hausdorff--Young inequality, we have $\FF(L^{p}(\R^d)) \subset L^{p'}(\R^d)$ for $p \in [1,2]$ with $1/p+1/p'=1$.

In \eqref{def:f_sharp} we use $g^* : \R^d \to [0, \infty)$ to denote the symmetric-decreasing rearrangement (or Schwarz rearrangement) of a measurable function $g : \R^d \to \C$ that vanishes at infinity (in the sense that the sets $\{ |g| > t \}$ have finite Lebesgue measure in $\R^d$ for every $t> 0$). Explicitly, we have the formula
\be
g^*(x) = \int_0^\infty \mathds{1}_{\{ |g| > t \}^*}(x) \, dt ,
\ee
where $A^* \subset \R^d$ stands for the symmetric-decreasing rearrangement of a measurable set $A \subset \R^d$ with finite measure $|A| < \infty$, i.\,e., $A^*=B_R(0)$ is the open $d$-dimensional ball in centered at the origin with radius $R \geq 0$ such that $|B_R(0)| = |A|$ (where we choose $A^* = \emptyset$ if $|A|=0$). We refer to \cite{LiLo-01} for a general discussion of the concept of symmetric-decreasing rearrangements of functions. 

From the classical fact that $\| g \|_{L^p} = \| g^* \|_{L^p}$ for $p \in [1, \infty]$ due to the equimeasurability of the symmetric decreasing-rearrangement, an application of Plancherel's theorem immediately yields
\be 
\| f^\sharp \|_{L^2} = \| f \|_{L^2}
\ee
for any $f \in L^2(\R^d)$. Also, it is easy to see that $f \mapsto f^\sharp$ is non-expansive on $L^2(\R^d)$, i.\,e.,
\be 
\| f^\sharp - g^\sharp \|_{L^2} \leq \| f-g \|_{L^2}
\ee
for all $f,g \in L^2(\R^d)$, which follows from Plancherel's theorem and the well-known non-expansivity estimate $\| u^* - v^* \|_{L^2} \leq \| u-v \|_{L^2}$. Furthermore, we find the following {\em Hardy--Littlewood type} inequality for the Fourier rearrangement:
\be \label{ineq:hardy}
\left | \int_{\R^d} \overline{f(x)} g(x) \,dx \right | \leq \int_{\R^d} f^\sharp(x) g^\sharp(x) \, dx
\ee
for every $f,g \in L^2(\R^d)$.  To prove \eqref{ineq:hardy}, we observe $|\langle f,  g \rangle | = | \langle \wf, \wg \rangle | \leq \langle |\wf|, |\wg| \rangle \leq \langle (\wf)^*, (\wg)^* \rangle = \langle f^\sharp, g^\sharp \rangle$ by using Plancherel's identity twice together with the classical Hardy--Littlewood inequality $\int_{\R^d} |u(x)| |v(x)| \, dx \leq \int_{\R^d} u^*(x) v^*(x) \, dx$. 

For later use, we also record the following compactness property of the set
\be 
H^{s,\sharp}(\R^d) := (H^s(\R^d))^\sharp = \{ f \in H^s(\R^d) : f=f^\sharp \}
\ee 
which consists of function in $H^s(\R^d)$ that are equal to their Fourier rearrangement. From Lemmas \ref{lem:wk_closed} and \ref{lem:compact} below, we have the following facts for all $d \in \N$:
\begin{itemize}
\item $H^{s,\sharp}(\R^d)$ is weakly closed in $H^{s}(\R^d)$ with $s \geq 0$.
\item $H^{s, \sharp}(\R^d)$ is compactly embedded in $L^p(\R^d)$ for every $2 < p < p_*(s,d)$ and $s>0$.
\end{itemize}
Evidently, these basic facts will greatly simplify an existence proof for optimizer in variational problems, where the Fourier rearrangement is beneficial.

\subsection{Non-Strict Rearrangement Inequalities}

We begin with the first inequality stated in Theorem \ref{thm:main}.

\begin{lemma} \label{lem:L_rearrange}
Suppose that $L$ satisfies Assumption \ref{ass:L} with some $s \geq 0$. Then, for all $f \in H^s(\R^d)$, we have
$$
\langle f^\sharp, L f^\sharp \rangle \leq \langle f, L f\rangle.
$$
Moreover, equality holds if and only if 
$$
|\widehat{f}(\xi)| = (\widehat{f})^*(\xi) \quad \mbox{for a.\,e.~$\xi \in \R^d$},
$$
where $(\widehat{f})^*:\R^d \to [0, \infty)$ denotes the symmetric-decreasing rearrangement of $\widehat{f} : \R^n \to \C$.
\end{lemma}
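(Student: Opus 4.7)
My plan is to reduce the claim to a standard Hardy–Littlewood–type inequality on the Fourier side. By Plancherel's identity together with the hypothesis on $L$, I would write
\[
\langle f, L f\rangle = \int_{\R^d} \omega(\xi)\,|\widehat{f}(\xi)|^2\,d\xi, \qquad \langle f^\sharp, L f^\sharp\rangle = \int_{\R^d} \omega(\xi)\,\bigl((\widehat{f})^*(\xi)\bigr)^2\,d\xi,
\]
using that $\widehat{f^\sharp}=(\widehat{f})^*$ is nonnegative. Setting $h:=|\widehat{f}|^2$, so that $h^*=((\widehat{f})^*)^2$ by the equimeasurability of the rearrangement, the inequality reduces to showing $\int \omega\, h^*\,d\xi \le \int \omega\, h\,d\xi$. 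The finiteness of both sides is guaranteed by the bound $0\le \omega(\xi)\le C(1+|\xi|^{2s})$ and $f\in H^s(\R^d)$.

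The core step is a layer-cake computation. Writing $h(\xi)=\int_0^\infty \mathbf{1}_{\{h>t\}}(\xi)\,dt$ and Fubini, I get
\[
\int_{\R^d}\omega\, h\,d\xi = \int_0^\infty \int_{A_t}\omega\,d\xi\,dt, \qquad \int_{\R^d}\omega\, h^*\,d\xi = \int_0^\infty \int_{B_t}\omega\,d\xi\,dt,
\]
where $A_t:=\{h>t\}$ and $B_t:=\{h^*>t\}$. Since $h^*$ is radially symmetric and decreasing, each $B_t$ is an open ball $B_{R(t)}(0)$, and equimeasurability of the rearrangement gives $|A_t|=|B_t|$. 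The pointwise comparison I need is
\[
\int_{B_t}\omega\,d\xi \le \int_{A_t}\omega\,d\xi \qquad \text{for every } t>0.
\]
This follows by splitting both integrals as $\int_{A_t\cap B_t}+\int_{A_t\triangle B_t}$ and using the radial monotonicity of $\omega=W(|\cdot|)$: on $A_t\setminus B_t$ we have $|\xi|\ge R(t)$ so $\omega\ge W(R(t))$, while on $B_t\setminus A_t$ we have $|\xi|<R(t)$ so $\omega\le W(R(t))$; combined with $|A_t\setminus B_t|=|B_t\setminus A_t|$, this yields the desired pointwise bound. Integrating over $t$ gives the non-strict inequality.

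For the equality case, suppose $\langle f,Lf\rangle=\langle f^\sharp,Lf^\sharp\rangle$. Then $\int_{A_t}\omega = \int_{B_t}\omega$ for a.e.\ $t>0$. I claim this forces $A_t=B_t$ up to a Lebesgue null set for a.e.\ $t$. Indeed, on $B_t\setminus A_t$ the strict inequality $|\xi|<R(t)$ together with the \emph{strict} radial monotonicity of $\omega$ from Assumption~\ref{ass:L}(ii) gives $\omega(\xi)<W(R(t))$, so if $|B_t\setminus A_t|>0$ one would obtain $\int_{B_t\setminus A_t}\omega < W(R(t))|B_t\setminus A_t|= W(R(t))|A_t\setminus B_t|\le \int_{A_t\setminus B_t}\omega$, contradicting equality. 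Hence the super-level sets of $h$ agree a.e.\ with balls centered at the origin, which forces $h=h^*$ a.e., i.e.\ $|\widehat{f}|=(\widehat{f})^*$ a.e.

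The main technical point is the strict-inequality argument in the equality case: it is precisely here that the assumption of \emph{strict} monotonicity of $\omega$ in $|\xi|$ (rather than just monotonicity) is used in an essential way, and this also explains why the case $s=0$ (constant $\omega$) does not give any information—consistent with Remark 2 following the statement of Theorem~\ref{thm:main}. Everything else is bookkeeping with Fubini and equimeasurability.
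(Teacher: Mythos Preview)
Your proof is correct and follows essentially the same route as the paper: Plancherel reduction to $\int\omega\, h^*\le\int\omega\, h$, layer-cake to the set-level inequality $\int_{A^*}\omega\le\int_A\omega$, the splitting via $A\setminus A^*$ and $A^*\setminus A$ together with radial monotonicity of $\omega$, and the strict-monotonicity contradiction for the equality case. One small inaccuracy in your closing commentary (not the proof): $s=0$ in Assumption~\ref{ass:L} does not make $\omega$ constant---condition~(ii) still demands strict radial monotonicity, $s=0$ only means $\omega$ is bounded---and the remark you cite after Theorem~\ref{thm:main} concerns $p=2$, not $s=0$.
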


\begin{remark*}
{\em In particular, we obtain the general inequality
$$
\| (-\DD)^{s/2} f^\sharp \|_{L^2} \leq \| (-\DD)^{s/2} f\|_{L^2} \quad \mbox{for $f \in H^s(\R^d)$ and $s > 0$},
$$
and equality holds if and only if $|\wf|(\xi)= (\wf)^*(\xi)$ for almost every $\xi \in \R^d$.} 
\end{remark*}

\begin{proof}
We essentially follow the proof given in \cite{BoLe-17}, where the case $L=(-\Delta)^s$ with arbitrary $s >0$ is discussed. For the reader's convenience, we provide the straightforward adaptation to the more general case of operators $L$ satisfying Assumption \ref{ass:L}. 

With standard abuse of notation we write $\omega(|\xi|)$ using that $\omega : \R^d \to \R$ is radially symmetric on $\R^d$. Now, by Plancherel's theorem, the claimed inequality is equivalent to
\be 
\int_{\R^d} |(\widehat{f})^*(\xi)|^2 \omega(|\xi|) \, d \xi \leq \int_{\R^d} |\widehat{f}(\xi)|^2 \omega(|\xi|) \, d \xi.
\ee
From well-known properties of the symmetric-decreasing rearrangement we recall that $|(\widehat{f})^* |^2 = ( |\widehat{f}|^2)^*$. Hence it remains to show that
\be \label{ineq:g_rearr}
\int_{\R^d} g^*(\xi) \omega(|\xi|) \, d\xi  \leq \int_{\R^d} g(\xi) \omega(|\xi|) \, d \xi.
\ee
for any measurable nonnegative function $g :  \R^d \to [0, \infty )$ that vanishes at infinity. Moreover, by the layer-cake representation, we can write $g(\xi) = \int_{0}^\infty \chi_{\{ g > t \}}(\xi) \, dt$ for a.\,e.~$\xi$, and thus it suffices to prove that
\be  \label{ineq:measure}
\int_{\R^d} \chi_{A^*}(\xi) \omega(|\xi|) \, d \xi \leq \int_{\R^d} \chi_A(\xi) \omega(|\xi|) \, d \xi
\ee
for any measurable set $A\subset \R^n$ with finite measure, where $A^*$ denotes the symmetric-decreasing rearrangement of the set $A$, i.\,e., $A^*=B_R(0)$ is the open ball around 0 with radius $R \geq 0$ such that $\mu(B_R(0)) = \mu(A)$ (where we choose $R=0$ if $\mu(A)=0$.) It is now easy to see that \eqref{ineq:measure} holds. From $\mu(A \setminus A^*) = \mu(A)- \mu(A \cap A^*)$, $\mu(A^* \setminus A) = \mu(A^*) - \mu(A \cap A^*)$, and $\mu(A) = \mu(A^*)$, we deduce that $\mu(A \setminus A^*)=\mu(A^* \setminus A)$. From this and by using that $|\xi| \mapsto \omega(|\xi|)$ is monotone increasing, we get
\be \label{ineq:measure2}
\int_{A \setminus A^*} \omega(\xi) \, d \xi \geq \omega(R) \mu(A \setminus A^*) = \omega(R) \mu(A^* \setminus A) \geq \int_{A^* \setminus A} \omega(|\xi|) \, d \xi.
\ee 
Thus we find $\int_A \omega = \int_{A\setminus A^*} \omega + \int_{A \cap A^*} \omega \geq \int_{A^* \setminus A} \omega + \int_{A \cap A^*} \omega = \int_{A^*} \omega$, which proves that \eqref{ineq:measure} holds true. 

It remains to discuss the case when $f \in H^s(\R^d)$ satisfies $\langle f^\sharp, L f^\sharp \rangle = \langle f, L f \rangle$. Here we first note that in order to have equality in \eqref{ineq:measure} we must have that $\mu(A \setminus A^*)=0$. To see this, we argue by contradiction. So let us suppose we have equality in \eqref{ineq:measure} with $\mu(A \setminus A^*) > 0$. Equality in \eqref{ineq:measure} implies equality in \eqref{ineq:measure2}. Note that $\omega(|\xi|) < \omega(R)$ for $\xi \in A^*=B_R(0)$, since $|\xi| \mapsto \omega(|\xi|)$ is strictly increasing by assumption. Therefore, we have $\int_{A^* \setminus A} \omega < \omega(R) \mu(A \setminus A^*)$. But this contradicts that equality holds in \eqref{ineq:measure2}. This shows that equality in \eqref{ineq:measure} can hold only if $\mu(A \setminus A^*)=\mu(A^* \setminus A)=0$. Since $\mu(A) = \mu(A^*)$, the latter fact shows that $\mu(A \cap A^*) = \mu(A) = \mu(A^*)$ and hence the sets $A$ and $A^*$ coincide up to a set of zero measure.

By using the layer-cake representation for $g$, we deduce that equality in \eqref{ineq:g_rearr} can hold only if the characteristic functions satisfy $\chi_{\{ g > t \}}(\xi) = \chi_{\{ g>t \}^*}(\xi)$ for a.\,e.~$(\xi,t) \in \R^d \times \R_{>0}$. Hence, by the layer-cake principle, we conclude equality $g(\xi) = g^*(\xi)$ for a.\,e.~$\xi \in \R^d$. If we apply this to $g = |\widehat{f}|^2$, we deduce that the equality $\langle f^\sharp, L f^\sharp \rangle = \langle f, L f \rangle$ implies that $|\widehat{f}(\xi)|^2 = (|\widehat{f}(\xi)|^2)^* = |(\widehat{f})^*(\xi)|^2$ holds a.\,e. Since $(\widehat{f})^* \geq 0$ is nonnegative, this is equivalent to saying that $|\widehat{f}(\xi)| = (\widehat{f})^*(\xi)$ for a.\,e.~$\xi \in \R^d$. This completes the proof of Lemma \ref{lem:L_rearrange}.
\end{proof}

Now, we turn to the second inequality stated in Theorem \ref{thm:main}.
\begin{lemma} \label{lem:BLL}
Suppose $p \geq 2$ is an even integer or $p=\infty$. Let $p'$ denotes its dual exponent. Then, for every $f \in L^2(\R^d) \cap \FF(L^{p'}(\R^d))$, we have $f^\sharp \in L^p(\R^d)$ and 
$$
\| f \|_{L^p} \leq \| f^\sharp \|_{L^p}.
$$
\end{lemma}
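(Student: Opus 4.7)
The plan is to split into two cases depending on whether $p$ is finite or not, and in both cases to rewrite $\|f\|_{L^p}^p$ as an integral in Fourier space in a form to which a symmetrization inequality (either trivially or via Brascamp--Lieb--Luttinger) applies.

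First I would dispose of the case $p=\infty$ (so $p'=1$). The hypothesis $f \in \FF(L^1(\R^d))$ forces $\wf \in L^1(\R^d)$, hence $f$ is continuous with $\|f\|_{L^\infty} \leq \|\wf\|_{L^1}$. By the equimeasurability of the symmetric-decreasing rearrangement, $\|\wf\|_{L^1} = \|(\wf)^*\|_{L^1}$, and this common value equals $f^\sharp(0)$ because $(\wf)^* \geq 0$. Finally, since $(\wf)^* \geq 0$, the function $f^\sharp$ is continuous and positive definite in the sense of Bochner, so $\|f^\sharp\|_{L^\infty} = f^\sharp(0)$. Chaining these gives $\|f\|_{L^\infty} \leq \|f^\sharp\|_{L^\infty}$.

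For the main case $p=2k$ with $k \in \N$, set $g = \wf \in L^{p'}(\R^d) \cap L^2(\R^d)$. Writing $|f|^{2k} = f^k\bar f^k$ and expanding each factor via Fourier inversion, I obtain the multilinear identity
\begin{equation*}
\|f\|_{L^{2k}}^{2k}
= \int_{\R^{2kd}} \prod_{j=1}^k g(\xi_j) \prod_{j=1}^k \overline{g(\eta_j)}\,
\delta\Bigl(\sum_{j=1}^k \xi_j - \sum_{j=1}^k \eta_j\Bigr)\, d\xi\, d\eta,
\end{equation*}
which is absolutely convergent after integrating out one variable against the delta (Young's convolution inequality, using $g \in L^{p'}$). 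Pulling the absolute value inside the integral yields the majorization
\begin{equation*}
\|f\|_{L^{2k}}^{2k}
\leq \int_{\R^{2kd}} \prod_{j=1}^k |g(\xi_j)| \prod_{j=1}^k |g(\eta_j)|\,
\delta\Bigl(\sum \xi_j - \sum \eta_j\Bigr)\, d\xi\, d\eta.
\end{equation*}
This is a $2k$-fold integral of a product of nonnegative functions with a single linear constraint, which is precisely the setting of the Brascamp--Lieb--Luttinger rearrangement inequality. Replacing $|g|$ by $|g|^* = g^*$ can only increase the integral. Running the same Fourier expansion in reverse, now for $f^\sharp$ (for which $\widehat{f^\sharp} = g^* \geq 0$ so the triangle-inequality step becomes an equality), identifies the resulting quantity as $\|f^\sharp\|_{L^{2k}}^{2k}$; taking $2k$-th roots gives the inequality. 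The membership $f^\sharp \in L^p$ follows either directly from this finite bound or from Hausdorff--Young applied to $g^* \in L^{p'}$.

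The only non-routine step I expect is the clean invocation of Brascamp--Lieb--Luttinger: one must view the $2k$ integration variables (minus one eliminated by the delta) as a single vector in $\R^{(2k-1)d}$ and exhibit the $2k$ linear maps along which the copies of $|g|$ are composed. After this bookkeeping, the rest is straightforward rewriting of $\|\cdot\|_{L^{2k}}^{2k}$ as $\|g^{*k}\|_{L^2}^2$, where $g^{*k}$ denotes the $k$-fold convolution, so that BLL effectively asserts $\||g|^{*k}\|_{L^2} \leq \|(g^*)^{*k}\|_{L^2}$ — an inequality that could equivalently be extracted by iterating Riesz's rearrangement inequality.
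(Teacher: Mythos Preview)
Your proposal is correct and follows essentially the same route as the paper: for $p=\infty$ you reproduce the paper's argument verbatim, and for $p=2k$ both you and the paper rewrite $\|f\|_{L^{2k}}^{2k}$ as a $(2k-1)$-fold convolution of $\wf$ and $\wcf$ evaluated at the origin (your delta-constraint formulation is just the unpacked version of this), then invoke Brascamp--Lieb--Luttinger together with $(\wf)^*=(\wcf)^*$. The only cosmetic difference is that the paper isolates the convolution identity as a separate lemma and treats $p=2$ by a one-line remark, whereas you absorb both into the same computation.
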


\begin{proof}
If $p=2$, then we recall that $\|f \|_{L^2} = \| f^\sharp \|_{L^2}$ and there is nothing left to prove. So let us assume that $p>2$ holds for the rest of the proof.

Since $\widehat{f} \in L^{p'}(\R^d)$ by assumption and $\| (\widehat{f})^* \|_{L^{p'}} = \| \widehat{f} \|_{L^{p'}}$, the Hausdorff-Young inequality implies that $f^\sharp = \FF^{-1} \{ (\widehat{f})^* \}$ belongs to $L^p(\R^d)$.

Let us suppose that $p=2m$ with some integer $m \geq 2$. With the use of Lemma \ref{lem:conv} below we deduce that
$$
\| f \|_{L^p}^{p} = \FF(|f|^{2m})(0) = ( \wf \ast \wcf \ast \ldots \wf \ast \wcf)(0)
$$
with $2m-1$ convolutions on the right-hand side. From the Brascamp--Lieb--Luttinger \cite{BrLiLu-74} (which generalizes the classical Riesz inequality) we obtain
$$
 ( \wf \ast \wcf \ast \ldots \wf \ast \wcf)(0) \leq ( (\wf)^* \ast (\wf)^* \ast \ldots \ast (\wf)^* \ast (\wf)^*)(0) = \FF( |f^\sharp|^{2m})(0),
$$
where we also used that $(\wf)^* = (\wcf)^*$ due to the fact the function $\wcf(\xi) = \overline{f(-\xi)}$ has the same symmetric-decreasing rearrangement as $\wf(\xi)$. The completes the proof for $p \in 2 \N$.

Finally, let us assume that $p=\infty$ and thus $p'=1$. We readily see that
\be \label{eq:f_Lp_conv}
\| f \|_{L^\infty} \leq \int_{\R^d} |\widehat{f}(\xi)| \, d\xi \leq \int_{\R^d} (\widehat{f})^*(\xi) \, d\xi = f^\sharp(0).
\ee
On the other hand, since $\widehat{f}^*(\xi) \geq 0$ is non-negative and $(\widehat{f})^* \in L^1(\R^d)$, the function $f^\sharp$ is continuous and positive definite; in particular, it holds that $f^\sharp(0) \geq |f^\sharp(x)|$ for all $x \in \R^d$. Hence we find $f^\sharp(0) = \| f^\sharp \|_{L^\infty}$, which shows that $\|f \|_{L^\infty} \leq \| f^\sharp \|_{L^\infty}$.

\end{proof}

\section{On Equality in the Hardy--Littlewood Majorant Problem in $\R^d$ \\ and the Proof of Theorem \ref{thm:main}}

\label{sec:major}

We begin with the following result.

\begin{lemma} \label{lem:UMP}
Let $p \geq 2$ be an even integer or $p=\infty$ and let $1 \leq p' \leq 2$ be its dual exponent. Suppose that $f,g \in \FF(L^{p'}(\R^d))$ satisfy $|\widehat{f}(\xi)| \leq \widehat{g}(\xi)$ for a.\,e.~$\xi \in \R^d$. Then it holds 
$$
\| f \|_{L^p} \leq \| g \|_{L^p}.
$$
\end{lemma}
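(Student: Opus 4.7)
The plan is to rewrite $\|f\|_{L^p}^p$ for even integer $p$ in a manifestly monotone form in the modulus of the Fourier transform, so that the pointwise majorant hypothesis $|\wf|\le \wg$ can be inserted directly. The case $p=\infty$ is handled separately via $L^1$--$L^\infty$ duality, and $p=2$ is immediate from Plancherel: $|\wf|^2\le \wg^2$ integrates to $\|f\|_{L^2}^2\le \|g\|_{L^2}^2$.

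For $p=2m$ with $m\ge 2$, I would start from the identity
$$
\|f\|_{L^{2m}}^{2m} \;=\; \FF\bigl(|f|^{2m}\bigr)(0) \;=\; \bigl(\wf*\wcf*\wf*\wcf*\cdots*\wf*\wcf\bigr)(0),
$$
which is just the iterated version of $\FF(f\bar f)=\wf*\wcf$ and is precisely the convolution identity already invoked in the preceding Lemma \ref{lem:BLL} (citing Lemma \ref{lem:conv}). Unfolding the $(2m-1)$-fold convolution as an integral over $\R^{d(2m-1)}$ and applying the triangle inequality brings the modulus inside each factor. The crucial observation is that $|\wcf(\xi)|=|\overline{\wf(-\xi)}|=|\wf(-\xi)|$, so the hypothesis gives $|\wcf(\xi)|\le \wg(-\xi)$; since $\wg\ge 0$ is real, one also has $\wcg(\xi)=\overline{\wg(-\xi)}=\wg(-\xi)$, so $|\wcf|\le \wcg$ pointwise. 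Substituting these two pointwise majorants into every factor and refolding the integral back into a convolution, the bound reads
$$
\|f\|_{L^{2m}}^{2m} \;\le\; \bigl(\wg*\wcg*\wg*\wcg*\cdots*\wg*\wcg\bigr)(0) \;=\; \|g\|_{L^{2m}}^{2m},
$$
which is the desired inequality. For $p=\infty$, with $p'=1$, Fourier inversion gives $\|f\|_{L^\infty}\le \|\wf\|_{L^1}\le \|\wg\|_{L^1}=g(0)$, and since $\wg\ge 0$ with $\wg\in L^1$ forces $g$ to be continuous and positive definite, one has $\|g\|_{L^\infty}=g(0)$, completing the proof.

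There is no deep obstacle; the only technicality is ensuring that the iterated convolution at the origin is actually a convergent integral so that the triangle inequality is legitimate. This is guaranteed by the hypothesis $f,g\in\FF(L^{p'}(\R^d))$, which by Hausdorff--Young gives $f,g\in L^{p}(\R^d)$ and, via Young's convolution inequality applied to $\wf,\wcf,\wg,\wcg\in L^{p'}=L^{2m/(2m-1)}$, makes both $(2m-1)$-fold convolutions finite at $0$. It is worth flagging that the evenness of $p$ is essential: this Fourier-side representation requires an integer number of conjugate pairs $f\bar f$, and the analogous inequality genuinely fails for $p\notin 2\N\cup\{\infty\}$, as discussed in the section on the Hardy--Littlewood majorant problem above.
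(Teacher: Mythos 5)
Your proof is correct and matches the paper's own argument essentially line for line: the same convolution identity $\|f\|_{L^{2m}}^{2m}=(\wf*\wcf*\cdots*\wf*\wcf)(0)$, the same observation that $|\wcf|\le\wcg$ since $\wg$ is real and nonnegative, the same appeal to Young's inequality for convergence, and the same $L^1$--$L^\infty$ / positive-definiteness argument for $p=\infty$.
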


\begin{remark*}
{\em 
This result is a classical fact that can be traced back to the works of Hardy and Littlewood for the Fourier transform on the circle group $\mathbb{T}$ in \cite{HaLi-35}, and the corresponding property above is referred to as the {\bf upper majorant propety (UMP)} for the $L^p$-norms with $p \in 2 \N \cup \{\infty \}$. As we recall below, the assertion in Lemma \ref{lem:UMP} follows from the observation that for $p=2m$ with $m \in \N$, we can write $\| f \|_{L^p}$ using convolutions in Fourier space. (The proof for $p=\infty$ is also very straightforward.) However, the inequality $\| f \|_{L^p} \leq \| g \|_{L^p}$ when $|\widehat{f}(\xi)| \leq \widehat{g}(\xi)$ is known to {\em fail} in general for exponents $p \not \in 2 \N \cup \{ \infty \}$; we refer to \cite{Li-60, Bo-62, MoSc-09}  for explicit counterexamples on the torus $\mathbb{T}$. Using these examples, we recall that it is not hard to construct for any given $p \not \in 2 \N \cup \{ \infty \}$ and Schwartz function $f \in \mathcal{S}(\R^d)$ such that $\| \FF^{-1} (|\FF f|) \|_{L^p} < \| f \|_{L^p}$, showing that the upper majorant property fails for $L^p(\R^d)$-norms when $p \not \in 2 \N \cup \{ \infty\}$.
}
\end{remark*}

\begin{proof}
For the reader's convenience and also for later use below (when we study the case of equality), we provide a detailed proof of Lemma \ref{lem:UMP} as follows. We use a form of the proof, which will be more amenable to study the case of equality below.

First, we remark that $f, g \in L^p(\R^d)$ by the Hausdorff--Young inequality and the assumption that $\widehat{f}, \widehat{g} \in L^{p'}(\R^d)$. Let us first treat the case $p \neq \infty$. Thus we assume that $p = 2m$ with $m \in \N$. Correspondingly, the dual exponent is given by $p'=\frac{2m}{2m-1}$. Since $p=2m$, we observe that
\be \label{eq:conv_m}
\| f \|_{L^p}^p = \int_{\R^d} |f|^{2m} \, dx = \FF (|f|^{2m})(0) = ( \widehat{f} \ast \widehat{\overline{f}} \ast \cdots \ast \widehat{f} \ast \widehat{\overline{f}} )(0),
\ee
where the number of convolutions on the right side equals $m$. Recall that $\widehat{f} \in L^{\frac{2m}{2m-1}}(\R^d)$ by assumption. Thus the convolution is well-defined by Young's inequality and the map $\xi \mapsto ( \widehat{f} \ast \widehat{\overline{f}} \ast \cdots \ast \widehat{f} \ast \widehat{\overline{f}})(\xi)$ is bounded and continuous. Likewise, we find
 \be 
\| g \|_{L^p}^p = ( \widehat{g} \ast \widehat{\overline{g}} \ast \cdots \ast \widehat{g} \ast \widehat{\overline{g}} )(0).
\ee
Since $\widehat{\overline{f}}(\xi) = \overline{\widehat{f}(-\xi)}$ and $\widehat{\overline{g}}(\xi) = \overline{\widehat{g}(-\xi)}= \widehat{g}(-\xi)$, the inequalities $|\widehat{f} | \leq \widehat{g}$ and $|\widehat{\overline{f}}| \leq \widehat{\overline{g}}$ hold a.\,e.~in $\R^d$. From the convolution expressions above we readily deduce that $\| f \|_{L^p}^p \leq \| g \|_{L^p}^p$ which is the desired bound for exponents $p \in 2 \mathbb{N}$.

Finally, we remark that the case $p=\infty$ can be treated in the same way as in the proof of Lemma \ref{lem:BLL}.
\end{proof}

\begin{lemma} \label{lem:strict}
Let $f, g$ be as in Lemma \ref{lem:UMP} above and suppose that $p > 2$ is an even integer or $p=\infty$. In addition, assume that $\widehat{f}$ is continuous and that $\{\xi : |\widehat{f}(\xi)| > 0 \} \subset \R^d$ is a connected set. Then equality 
$$
 \| f \|_{L^p} = \| g \|_{L^p}
 $$
holds if and only if
$$ 
\widehat{f}(\xi) = e^{i (\alpha + \beta \cdot \xi)} \widehat{g}(\xi) \quad \mbox{for all $\xi \in \R^d$},
$$
with some constants $\alpha \in \R$ and $\beta \in \R^d$.
\end{lemma}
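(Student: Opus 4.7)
The plan is to exploit the multilinear convolution representation of the $L^p$-norm used in the proof of Lemma~\ref{lem:UMP} in order to extract, in the equality case, both a pointwise modulus identity $|\widehat f| = \widehat g$ and a phase functional equation whose only continuous solutions on $\Omega_f := \{|\widehat f| > 0\}$ are affine.

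For $p = 2m$ with $m \ge 2$, I would start by rewriting
\[
\| f \|_{L^{2m}}^{2m} = \int_{\{\sum_i \zeta_i = \sum_j \eta_j\}} \widehat f(\zeta_1)\overline{\widehat f(\eta_1)} \cdots \widehat f(\zeta_m)\overline{\widehat f(\eta_m)} \, d\sigma,
\]
with $d\sigma$ the natural surface measure on the hyperplane, and analogously $\| g \|_{L^{2m}}^{2m}$ with the nonnegative real integrand $\prod_{i=1}^m \widehat g(\zeta_i)\widehat g(\eta_i)$. Because $|\widehat f| \le \widehat g$, the $g$-integrand pointwise majorizes the real part of the $f$-integrand, so the norm equality forces these two integrands to coincide $\sigma$-a.e. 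Matching both the modulus and the real part at such points yields, for each coordinate, $|\widehat f(\zeta_i)| = \widehat g(\zeta_i)$, together with a phase alignment
\[
\theta(\zeta_1) + \cdots + \theta(\zeta_m) - \theta(\eta_1) - \cdots - \theta(\eta_m) \in 2\pi \mathbb{Z},
\]
where $\theta$ is any local lift of $\arg \widehat f$. A Fubini argument applied to the parameterization of the hyperplane by $(2m-1)d$ free coordinates then promotes the per-coordinate modulus equality to $|\widehat f| = \widehat g$ almost everywhere on $\R^d$.

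Because $\widehat f$ is continuous, the set $\Omega_f$ is open and, by hypothesis, connected; on any open ball $B \subset \Omega_f$, which is simply connected, I can choose a continuous $\R$-valued lift $\theta : B \to \R$ of $\arg \widehat f$. The map $(\zeta,\eta) \mapsto \sum_i \theta(\zeta_i) - \sum_j \theta(\eta_j)$ is then continuous on the convex, hence connected, set $B^{2m} \cap \{\sum \zeta_i = \sum \eta_j\}$, and by the previous step takes values in the discrete set $2\pi \mathbb{Z}$ on a dense (full-measure) subset. Continuity forces this map to land in $2\pi\mathbb{Z}$ everywhere and thus to be constant on the connected domain; evaluating at the diagonal $\zeta_i = \eta_i$ pins the constant to $0$. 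Specializing the resulting identity to $\zeta_1 = \xi + h$, $\zeta_2 = \xi - h$, $\zeta_3 = \cdots = \zeta_m = \eta_1 = \cdots = \eta_m = \xi$ yields the midpoint-convexity relation $\theta(\xi + h) + \theta(\xi - h) = 2\theta(\xi)$ for small $h$, which combined with continuity forces $\theta$ to be affine on $B$. Uniqueness of affine continuation together with the connectedness of $\Omega_f$ then globalize this to $\theta(\xi) = \alpha + \beta \cdot \xi$ on all of $\Omega_f$, so that $\widehat f(\xi) = e^{i(\alpha + \beta \cdot \xi)} \widehat g(\xi)$ a.e.\ on $\Omega_f$; the identity holds trivially a.e.\ on $\Omega_f^c$ since there $\widehat g = |\widehat f| = 0$ almost everywhere. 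The case $p = \infty$ I would handle directly via the chain $\|f\|_{L^\infty} \le \|\widehat f\|_{L^1} \le \|\widehat g\|_{L^1} = g(0) \le \|g\|_{L^\infty}$: equality in the middle step gives $|\widehat f| = \widehat g$ a.e., while equality in the outer steps, together with $f \in C_0(\R^d)$ (so that $\|f\|_{L^\infty}$ is attained at some $x_0$), gives the phase identity $\widehat f(\xi) = e^{i\alpha} e^{-2\pi i x_0 \cdot \xi}|\widehat f(\xi)|$ from the standard equality case of the triangle inequality for $L^1$-integrals, hence the claim with $\beta = -2\pi x_0$.

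The hard part will be the passage from the a.e.\ mod-$2\pi$ phase constraint to a genuine additive functional equation: it is precisely at this step that the two technical hypotheses are essential. The continuity of $\widehat f$ is needed to make $\Omega_f$ open and to give a continuous phase lift, while the connectedness of $\Omega_f$ rules out incompatible monodromy when patching the local affine identities and lets one globalize the representation $\theta(\xi) = \alpha + \beta \cdot \xi$ across $\Omega_f$.
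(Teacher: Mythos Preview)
Your proposal is correct and follows essentially the same route as the paper's proof: express $\|f\|_{L^{2m}}^{2m}$ as a multilinear Fourier integral, infer from equality both $|\widehat f|=\widehat g$ and a mod-$2\pi$ phase constraint, upgrade the latter to an everywhere identity via continuity on a connected domain, specialize to obtain the midpoint relation $\theta(\xi+h)+\theta(\xi-h)=2\theta(\xi)$, conclude local affinity, and globalize by connectedness; the $p=\infty$ case is handled exactly as in the paper. The only cosmetic differences are that the paper parameterizes the convolution via the auto-correlation $A_f$ on $\Sigma_{d,m-1}\times(\R^d)^m$ rather than your hyperplane $\{\sum\zeta_i=\sum\eta_j\}\subset(\R^d)^{2m}$, and that the paper first treats simply connected $\Omega$ globally and then reduces the merely connected case to it, whereas you work locally on balls from the outset; neither difference is substantive.
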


\begin{remarks*}
{\em 1) If the set $\Omega=\{ |\widehat{f}| > 0 \}$ is {\em not} connected, the conclusion of Lemma \ref{lem:strict} may fail. For an explicit counterexample, we refer to the remark after the proof below.
We can still relax the condition though, as it would be enough to assume that the closure $\overline{\Omega}$ is connected, and that $\partial\Omega$ is locally a finite union of smooth graphs of co-dimension $1$ transverse to one another.  

2) In the proof of Theorem \ref{thm:main} below, we will apply this to the situation where $|\wf|= (\wf)^*$. In this case, the set $\{ |\wf| > 0 \}$ is always an open ball in $\R^d$ or all of $\R^d$ and therefore connected.

3) The conclusion of Lemma \ref{lem:strict} clearly fails for $p=2$, since we can take any measurable $\theta : \R^d \to \R$ and see that $\wf = e^{i \theta} \wg$ implies equality $\| f \|_{L^2} = \| g \|_{L^2}$ by Plancherel's theorem.} 
\end{remarks*}

\begin{proof}
The `if'-part is trivial to see. Indeed, we then have $f(x) = e^{i \alpha} g(x+x_0)$ with some $\alpha \in \R$ and $x_0=\frac{1}{2\pi} \beta \in \R^d$. Thus we get $\| f \|_{L^p} = \| g \|_{L^p}$.

To prove the `only if'-part, we divide the rest of the proof into the following steps.

\subsubsection*{Step 1} We first consider the case $p = 2m$ with some integer $m \geq 2$. Assume that $|\widehat{f}(\xi)| \leq \widehat{g}(\xi)$ holds a.\,e.~and suppose that we have equality $\| f \|_{L^p} = \| g \|_{L^p}$. Inspecting the proof of Lemma \ref{lem:UMP}, we readily see that equality $|\widehat{f}(\xi)| = \widehat{g}(\xi)$ must hold a.\,e.~in $\R^d$. Since $\widehat{f}$ is continuous by assumption, so is $\widehat{g}$, and thus equality $|\widehat{f}(\xi)| = \widehat{g}(\xi)$ holds everywhere.

Let us define the set
\be
\Omega = \{ \xi \in \R^d : |\widehat{f}(\xi)| > 0 \} ,
\ee
which is open by the continuity of $\widehat{f}$. In what follows, we assume that $\Omega \neq \emptyset$ is non-empty, since otherwise $\widehat{f} \equiv 0$ and the claim of Lemma \ref{lem:strict} clearly follows. Furthermore, we will make now the additional assumption that
$$
\mbox{{\em $\Omega \subset \R^d$ is simply connected.}}
$$
Later we will turn to the general case when $\Omega$ is only assumed to be connected. 

From standard arguments there exists a continuous function $\theta : \Omega \to \R$ such that
\be \label{eq:f_phase}
\widehat{f}(\xi) = e^{i \theta(\xi)} |\widehat{f}(\xi)| \quad \mbox{for $\xi \in \Omega$}.
\ee
Indeed, the mapping $z : \Omega \to \mathbb{S}^1$ with $z(\xi) = \widehat{f}(\xi)/|\widehat{f}(\xi)|$ is well-defined and continuous. Since $\Omega \subset \R^d$ is simply connected (by our assumption above) and the map $\R \to \mathbb{S}^1$, $t \mapsto  e^{it}$ is a universal covering, there exists a continuous function $\theta : \Omega \to \R$ such that $e^{i \theta(\xi)} = z(\xi)$ for all $\xi \in \Omega$. For the moment, let us extend the phase function $\theta$ to all of $\R^d$ by setting $\theta(\xi) = 0$ for $\xi \in \R^d \setminus \Omega$. In particular, we trivially have that $\widehat{f}(\xi) = e^{i \theta(\xi)} |\widehat{f}(\xi)|$ now is true for all $\xi \in \R^d$. Of course, the function $\theta$ fails to be continuous in general on all of $\R^d$.

Next, we notice that \eqref{eq:conv_m} can be written as
\be
\| f\|_{L^p}^p = ( A_f \ast A_f \ast \cdots \ast A_f)(0),
\ee
with $m-1$ convolutions on the right side and $A_f$ denotes the auto-correlation function of $\widehat{f}$ given by
\begin{align*}
A_f(\xi) & = (\widehat{f} \ast \widehat{\overline{f}})(\xi) =  (\widehat{f} \ast \overline{\widehat{f}(-\cdot)}  )(\xi) = \int_{\R^d} \widehat{f}(\xi+\xi') \overline{\widehat{f}(\xi')} \, d \xi' \\ 
& = \int_{\R^d} e^{i \{ \theta(\xi +\xi') - \theta(\xi')\}} |\widehat{f}(\xi+\xi')| |\widehat{f}(\xi')| \, d \xi'.
\end{align*}
We proceed to find with 
\begin{align*}
\| f \|_{L^p}^p & = ( A_f \ast A_f \ast \cdots \ast A_f)(0) \\
& = \int_{(\R^d)^{m-1}} A_f(0-\xi_{m-1}) A_f(\xi_{m-2} - \xi_{m-1}) \ldots A_f(\xi_2-\xi_1) A_f(\xi_1) \, d\xi_1 \ldots d \xi_{m-1} \\
& = m^{-d/2} \int_{\Sigma_{d,m-1}} A_f(\eta_1) A_f(\eta_2) \ldots A_f(\eta_m) \, \mathcal{H}^{d(m-1)}(d  \bm{\eta}) .
\end{align*}
Here $\Sigma_{d,m-1}$ denotes the $d (m-1)$-dimensional linear subspace of $(\R^d)^m= \R^{dm}$ that is given by
\be 
\Sigma_{d,m-1} = \left \{ \bm{\eta} = (\eta_1, \ldots, \eta_m) \in \R^{dm} : \sum_{k=1}^m \eta_k = 0 \right \} ,
\ee
and $\mathcal{H}^{d(m-1)}(d \bm{\eta})$ denotes the corresponding Hausdorff measure on $\Sigma_{d,m-1}$. To transform the integral over $(\R^d)^{m-1}$ to $\Sigma_{d,m-1}$ we made use of the linear diffeomorphism
$$
(\R^d)^{m-1}  \ni (\xi_1, \ldots, \xi_{m-1}) \longmapsto (\xi_1, \xi_2 - \xi_1, \ldots, \xi_{m-1} - \xi_{m-2}, -\xi_{m-1}) \in \Sigma_{d,m-1},
$$
whose Jacobian is easily to be seen the constant $m^{d/2}$. If we now recall \eqref{eq:f_phase}, we deduce
\be  \label{eq:f_good}
\| f \|_{L^p}^p =  m^{-d/2} \int_{\Sigma_{d,m-1} \times (\R^d)^m} F(\bm{\eta},\bm{\xi})  \, d \bm{\xi} \, \mathcal{H}^{d(m-1)}(d \bm{\eta})
\ee
where the function $F: \Sigma_{d,m-1} \times \R^{dm} \to \C$ is defined as
\be  \label{def:F}
F(\bm{\eta}, \bm{\xi}) =  e^{i \Theta(\bm{\eta}, \bm{\xi})} \prod_{k=1}^{m} |\widehat{f}(\eta_k + \xi_k)| |\widehat{f}(\xi_k)|, \quad \Theta(\bm{\eta}, \bm{\xi}) =  e^{i\sum_{k=1}^m \{ \theta(\eta_k + \xi_k) - \theta(\xi_k) \}},
\ee
with $\bm{\eta} = (\eta_1, \ldots, \eta_m) \in \Sigma_{d,m-1}$ and $\bm{\xi} = (\xi_1, \ldots, \xi_m) \in \R^{dm}$. Similarly, we obtain that 
\be 
\| g \|_{L^p}^p = m^{-d/2} \int_{\Sigma_{d,m-1} \times (\R^d)^m} |F(\bm{\eta},\bm{\xi})| \, d \bm{\xi} \, \mathcal{H}^{d(m-1)}(d \bm{\eta})
\ee
using that $\widehat{g} = |\widehat{f}|$. 

Now, since we have $\| f \|_{L^p} = \| g \|_{L^p}$, we deduce that equality holds in the triangle estimate $\int_{\Sigma_{d,m-1} \times \R^{dm}} F(\bm{\eta},\bm{\xi}) \leq \int_{\Sigma_{d,m-1} \times \R^{dm}} |F(\bm{\eta}, \bm{\xi})|$. Thus we conclude that 
\be \label{eq:F_triangle}
F(\bm{\eta},\bm{\xi})= |F(\bm{\eta}, \bm{\xi})| \quad \mbox{for a.\,e.~$(\bm{\eta}, \bm{\xi}) \in \Sigma_{d,m-1} \times \R^{dm}$}.
\ee
Let us now define the non-empty set $S \subset \Sigma_{d,m-1} \times \R^{dm}$, where $F$ does not vanish, i.\,e.,
\begin{align*}
S & = \left \{ (\bm{\eta}, \bm{\xi}) \in \Sigma_{d,m-1} \times \R^{dm} :  |F(\bm{\eta}, \bm{\xi}) | >  0 \right \} \\
& = \left \{ (\bm{\eta}, \bm{\xi}) \in \Sigma_{d,m-1} \times \R^{dm} :  \mbox{$(\eta_k + \xi_k, \xi_k) \in \Omega \times \Omega$ for $k=1, \ldots, m$} \right \} .
\end{align*}
We note in passing that the set $S$ may not be connected, although $\Omega \subset \R^d$ has this property by assumption.
From \eqref{eq:F_triangle} and \eqref{def:F} we now deduce that
\be  \label{eq:phase_master}
\Theta(\bm{\eta}, \bm{\xi}) = \sum_{k=1}^m \big [ \theta(\eta_k + \xi_k) - \theta(\xi_k) \big ] \in 2 \pi \mathbb{Z} 
\ee
for almost every $(\bm{\eta}, \bm{\xi}) \in S$.

\subsubsection*{Step 2} We claim that \eqref{eq:phase_master} implies that $\theta(\xi)$ is an affine function on the set $\Omega$, i.\,e.,
\be \label{eq:affine}
\theta(\xi) = \alpha + \beta \cdot \xi \quad \mbox{for $\xi \in \Omega$},
\ee
with some constants $\alpha \in \R$ and $\beta \in \R^d$. 

To prove \eqref{eq:affine}, we first show that identity \eqref{eq:phase_master} holds for all $(\bm{\eta}, \bm{\xi}) \in S$. Indeed, this can be deduced from a simple continuity argument as follows. We argue by contradiction. Suppose \eqref{eq:phase_master} fails to hold, i.\,e., there is some $(\bm{\eta}, \bm{\xi}) \in S$ such that $\Theta(\bm{\eta}, \bm{\xi}) = c$ for some real number $c \not \in 2 \pi \mathbb{Z}$. Since $(\bm{\eta}+\bm{\xi}, \bm{\xi}) \in \Omega^m \times \Omega^m$ and $\Omega \subset \R^d$ is open, there exists $r > 0$ such that $B_r(\bm{\eta} + \bm{\xi}) \times B_r(\bm{\xi}) \subset \Omega^m \times \Omega^m$. Thus the set 
$$
V_r = \{ \bm{\zeta} \in \Sigma_{d,m-1} : | \bm{\zeta} - \bm{\eta} | < r/2 \} \times B_{r/2}(\bm{\xi}) \subset S
$$ 
is an open neighborhood around $(\bm{\eta}, \bm{\xi}) \in S$ in $\Sigma_{d,m-1} \times \R^{dm}$. By the continuity of $\Theta : S \to \R$, we deduce that $\Theta(\bm{\eta}, \bm{\xi}) = c$ for every $(\bm{\eta}, \bm{\xi}) \in V_r$, provided that $r> 0$ is sufficiently small. Since the set $V_r \subset S$ has positive measure $\int_{V_r}  d \bm{\xi} \, \mathcal{H}^{d(m-1)}(d \bm{\eta}) > 0$, we get a contradiction to \eqref{eq:phase_master}. Thus  we conclude that \eqref{eq:phase_master} holds for all $(\bm{\eta}, \bm{\xi}) \in S$. Furthermore, by fixing $\bm{\xi}$ and taking the limit $\bm{\eta} \to 0$, the continuity of $\Theta$ implies that in \eqref{eq:phase_master} only the constant value zero can be attained. In summary, we have found that
\be  \label{eq:phase_master2}
\Theta(\bm{\eta}, \bm{\xi}) = \sum_{k=1}^m \big [ \theta(\eta_k + \xi_k) - \theta(\xi_k) \big ] = 0 \quad \mbox{for all $(\bm{\eta}, \bm{\xi}) \in S$.}
\ee

Next, let $q \in \Omega$ be given and take $h \in \R^d$ with $|h| < \mathrm{dist}(q, \pt \Omega)$ (with the convention that $\mathrm{dist}(q, \pt \Omega)=\infty$ if $\pt \Omega= \emptyset$). We consider the point $(\bm{\eta}, \bm{\xi}) = (\eta_1, \ldots, \eta_m, \xi_1, \ldots, \xi_m) \in S$ defined by
$$
\mbox{$\eta_1 = -\eta_2 = h$ and $\eta_k = 0$ for $k \geq 3$,} \quad \mbox{$\xi_1 = \xi_2 = q$ and $\xi_k = 0$ for $k \geq 3$.}
$$ 
Inserting this into \eqref{eq:phase_master2}, we obtain the identity
\be
\theta(q) = \frac{1}{2} \left ( \theta(q+h) + \theta(q-h) \right ) \quad \mbox{for  all $q \in \Omega$ and $h \in \R^d$ with $|h| < \mathrm{dist}(q, \pt \Omega)$,}
\ee
By Lemma \ref{lem:affine} below, we deduce that $\theta : \Omega \to \R^d$ must be an affine function, which is the desired claim \eqref{eq:affine}. 

Finally, we can redefine the phase function $\theta(\xi)$ on the complement $\Omega^c = \{ \xi \in \R^d : \widehat{f}(\xi) =  0 \}$ by setting $\theta(\xi) = \alpha + \beta \cdot \xi$ for $\xi \in \Omega^c$. Thus we obtain that
\be 
\widehat{f}(\xi) = e^{i ( \alpha + \beta \cdot \xi)} |\widehat{f}(\xi)| =  e^{i ( \alpha + \beta \cdot \xi)} \widehat{g}(\xi) \quad \mbox{for all $\xi \in \R^d$.}
\ee
This finishes the proof of Lemma \ref{lem:strict} for exponents $p \in 2 \mathbb{N}$ with $p > 2$, provided that $\Omega$ is simply connected.

\subsubsection*{Step 3} Let us now assume that the open set $\Omega \subset \R^d$ is connected (but not necessarily simply connected). We define the function $z : \R^d \to \mathbb{S}^1$ by setting $z(\xi) = \wf(\xi)/|\wf(\xi)|$ for $\xi \in \Omega$. Note that $z$ is continuous on $\Omega$.   

By adapting the arguments in Steps 1 and 2 above, we conclude that the phase of $z$ is {\em locally affine}. That is,  for every $\xi_0\in \Omega$ there exists some open ball $B_r(\xi_0) \subset \Omega$ such that 
$$
z(\xi) = e^{i (\alpha + \beta \cdot \xi)} \quad \mbox{for all $\xi \in B_r(\xi_0)$},
$$ 
with some constants $\alpha \in \R$ and $\beta \in \R^d$, which may depend on $B_r(\xi_0)$. But since $\Omega \subset \R^d$ is connected, it is elementary to see that $\beta=-iz^{-1}\nabla z$ and $e^{i\alpha}$  are global constants on $\Omega$. In other words:
$$
z(\xi) = e^{i ( \alpha + \beta \cdot \xi)} \quad \mbox{for all $\xi \in \Omega$}.
$$
Extending the function to all $\xi\in\R^d$, this completes the proof of Lemma \ref{lem:strict} for all even integers $p > 2$.

\subsubsection*{Step 4} We finally turn to the case $p=\infty$. Again, by inspecting the proof of Lemma \ref{lem:UMP} above, we see that $|\widehat{f}(\xi)| = \widehat{g}(\xi)$ must hold a.\,e.~in the case of equality $\| f\|_{L^\infty} = \| g \|_{L^\infty}$. As in Step 1 above, we let $\Omega = \{ \xi \in \R^d : |\widehat{f}(\xi)| > 0\}$. As before, we deduce that there is a continuous function $z : \Omega \to \mathbb{S}^1$ such that $\widehat{f}(\xi) = z(\xi) |\widehat{f}(\xi)|$ for all $\xi \in \Omega$. 

Since $\widehat{f} \in L^1(\R^d)$ by assumption, the function $f(x)$ is continuous and vanishes at infinity. In particular, there exists some $x_0 \in \R^d$ such that $\| f \|_{L^\infty} = \sup_{x \in \R^d} |f(x)| = |f(x_0)|$. Let us pick $\gamma \in \R$ such that $e^{i \gamma} f(x_0) = |f(x_0)|$. By the Fourier inversion formula,
\be
\| f \|_{L^\infty} = e^{i \gamma} f(x_0) = \int_{\R^d} e^{i \gamma} e^{2 \pi i x_0 \cdot \xi} \widehat{f}(\xi) \, d \xi =  \int_{\R^d} e^{i  \gamma} e^{2 \pi i x_0 \cdot \xi }  z(\xi)|\widehat{f}(\xi)| \, d \xi .
\ee
For the function $g$, we use that $\widehat{g} \in L^1(\R^d)$ is nonnegative to conclude that
\be 
\| g \|_{L^\infty} = g(0) = \int_{\R^d} \widehat{g}(\xi) \, d \xi.
\ee
Since $|\widehat{f}| = \widehat{g}$, we conclude that equality $\| f \|_{L^\infty}= \| g \|_{L^\infty}$ can occur only if 
\be
 e^{i  \gamma} e^{2 \pi i x_0 \cdot \xi }  z(\xi)|\widehat{f}(\xi)| = |\widehat{f}(\xi)| = \widehat{g}(\xi) \quad \mbox{for a.\,e.~$\xi \in \R^d$}.
\ee
In particular, this implies that $e^{i  \gamma} e^{2 \pi i x_0 \cdot \xi }  z(\xi) =1$ for a.e. $\xi \in \Omega$, or equivalently
\be
 	 z(\xi) =e^{-i\gamma-2i \pi x_0\cdot \xi} \quad \mbox{for a.\,e.~$\xi \in \Omega$. }
\ee
By the continuity of $z$ on $\Omega$, we deduce that the equality $z(\xi) =e^{-i\gamma-2i\pi x_0\cdot \xi}$ holds for all $\xi\in\Omega$.
Redefining $z$ on $\Omega^c$ accordingly, we get $z(\xi)=e^{i\theta(\xi)}$ with $\theta(\xi)=-2\pi x_0\cdot\xi-\gamma$. 

\end{proof}

\begin{remark*}
{\em {\bf (Counterexample when $\Omega = \{ |\wf| > 0 \}$ is not connected).}
We give an example to illustrate that the topological assumption in Lemma \ref{lem:strict} on $\Omega = \{ |\widehat{f}| > 0 \} \subset \R^d$ is not just for technical convenience. Here is a counterexample for $p=4$ when $\Omega$ is not connected, having two connected components separated by a sufficiently large distance.

Suppose $y \in \R^d$ is a given point with $|y| > 4$ and let $U = B_1(0) \cup B_1(y)$. We choose a function $\psi \in C^\infty_c(\R^d)$ with $\psi \geq 0$ such that $\mathrm{supp} \, \psi \subset U$ with $\psi |_{B_1(0)} \not \equiv 0$ and $\psi  |_{B_1(y)} \not \equiv 0$. Now we take numbers $\alpha, \beta \in \R$ with $\alpha \neq \beta$ and pick a smooth function $\vartheta : \R^d \to \R$ such that $\theta |_{B_1(0)} \equiv  \alpha$ and $\theta |_{B_1(y)} \equiv \beta$. Moreover, we define the function $\tilde{\psi} \in C^\infty_c(\R^d)$ as
$$
\tilde{\psi}(\xi) = e^{i \vartheta(\xi)} \psi(\xi).
$$

Next, we consider the functions $g = \FF^{-1} \psi$ and $f = \FF^{-1} \tilde{\psi}$. By construction, the function $\wf$ is continuous and we have $|\wf|= \wg = \psi \geq 0$ and the set $\Omega = \{ |\wf| > 0 \} = \{ \psi > 0 \}$ is not connected in $\R^d$. We now claim that
$$
\| f \|_{L^4} = \| g \|_{L^4}.
$$
Indeed, by  inspecting the proof of Lemma \ref{lem:strict} above and adapting the notation therein, this equality will follow if we can show that 
$$
\Theta(\bm{\eta}, \bm{\xi}) = e^{i \sum_{k=1}^2 \left \{ \vartheta(\eta_k +\xi_k) - \vartheta(\xi_k) \right \}} = 1 \quad \mbox{for all $(\bm{\eta}, \bm{\xi}) \in S$}.
$$
To see this, we note that $(\bm{\eta}, \bm{\xi}) =((\eta_1, \eta_2), (\xi_1, \xi_2)) \in S$ implies that 
$$
\eta_1 + \eta_2 = 0 \quad \mbox{and} \quad  (\eta_k + \xi_k, \eta_k)  \in \Omega \times \Omega \subset U \times U \ \ \mbox{for $k=1,2$}.
$$
Using that $U= B_1(0) \cup B_1(y)$ with $|y| > 4$, we can check that for all $(\bm{\eta}, \bm{\xi})  \in S$ we have
$$
\vartheta(\eta_1 + \xi_1) - \vartheta(\xi_1) + \vartheta(\eta_2 + \xi_2) - \vartheta(\xi_2) = 0,
$$
using that $\vartheta |_{B_1(0)} \equiv \alpha$ and $\vartheta |_{B_1(y)} \equiv \beta$. 
Hence we conclude that $\| f\|_{L^4} = \| g \|_{L^4}$ with $\wf(\xi) = e^{i \vartheta(\xi)} \wg(\xi)$, 
where the phase function $\vartheta : \R^d \to \R$ fails to be globally affine.
}
\end{remark*}

\subsection{Proof of Theorem \ref{thm:main}}

Suppose $p > 2$ is an even integer or $p=\infty$, and let $1 \leq p' < 2$ denote its conjugate exponent. Let $f \in H^s(\R^d) \cap \FF(L^{p'}(\R^d))$ be given. From Lemma \ref{lem:L_rearrange} and \ref{lem:BLL} we conclude
\be \label{ineq:f_proof}
\langle f^\sharp, L f^\sharp \rangle \leq \langle f, L f\rangle \quad \mbox{and} \quad \| f \|_{L^p} \leq \| f^\sharp \|_{L^p} .
\ee

It remains to prove the assertion about the equality case. Suppose that equality holds in both inequalities in \eqref{ineq:f_proof} and assume $f \not \equiv 0$ (for otherwise the claim is trivially true). By Lemma \ref{lem:L_rearrange}, this implies 
\be
|\widehat{f}(\xi)| = (\widehat{f})^*(\xi)  \quad \mbox{for a.\,e.~$\xi \in \R^d$},
\ee
where we recall that $(\widehat{f})^* : \R^d \to [0, \infty)$ denotes the symmetric-decreasing rearrangement of $\widehat{f}$. Since $\widehat{f}^*(\xi)$ is radially symmetric, monotone decreasing in $|\xi|$, and lower semi-continuous, the set $\Omega = \{ \xi \in \R^d : |\widehat{f}(\xi)| >0 \}$ is either an open ball $B_R(0) \subset \R^d$ around the origin, or else $\Omega = \R^d$ in case $\widehat{f}^*$ never vanishes. In either case the set $\Omega \subset \R^d$ is simply connected. Assuming now that $\widehat{f}(\xi)$ is continuous, we can apply Lemma \ref{lem:strict} with $\widehat{g}=(\widehat{f})^* \geq 0$ to deduce from $\| f \|_{L^p} = \| f^\sharp \|_{L^p} = \| g \|_{L^p}$ that we must have
\be 
\widehat{f}(\xi) = e^{i (\alpha + \beta \cdot \xi)} (\widehat{f})^*(\xi) \quad \mbox{for all $\xi \in \R^d$},
\ee
with some constants $\alpha \in \R$ and $\beta \in \R^d$. Thus we obtain that $f(x) = e^{i \alpha} f^\sharp(x-x_0)$ for a.\,e.~$x \in \R^d$ with the constant translation $x_0 = -\frac{1}{2 \pi} \beta \in \R^d$. 

The proof of Theorem \ref{thm:main} is now complete. \hfill $\qed$

\begin{appendix}

\section{Some Auxiliary Results}

\begin{lemma} \label{lem:affine}
Let $\Omega \subset \R^d$ be open and connected. Suppose $f : \Omega \to \R$ is a continuous function with the property 
$$
f(x) = \frac{1}{2} \left ( f(x+h) + f(x-h) \right ) \quad \mbox{for all $x \in \Omega$ and $h \in \R^d$ with $|h| < \mathrm{dist}(x, \pt \Omega)$}.
$$
Then $f : \Omega \to \R$ is an affine function, i.\,e., we have $f(x) = a + b \cdot x$ for all $x \in \Omega$ with some constants $a \in \R$ and $b \in \R^d$.
\end{lemma}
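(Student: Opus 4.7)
The plan is to reduce to the smooth case by mollification, exploit the functional equation to conclude the Hessian vanishes, and then glue together local affine representations using the connectedness of $\Omega$. The functional identity $f(x+h) + f(x-h) - 2f(x) = 0$ is exactly the statement that the symmetric second difference of $f$ vanishes, so for a $C^2$ function it should translate directly into the vanishing of $D^2 f$.

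To make this rigorous, fix a ball $B_r(x_0)$ with $\overline{B_r(x_0)} \subset \Omega$ and let $\eta_\delta$ be a standard radial mollifier supported in $B_\delta(0)$, with $0 < \delta < r$. Define $f_\delta = f * \eta_\delta$ on $B_{r-\delta}(x_0)$. Because the mean-value relation holds pointwise at every $x-y$ with $y \in \mathrm{supp}\,\eta_\delta$ provided $|h|$ is small compared to $\mathrm{dist}(x,\partial\Omega) - \delta$, the smoothed function $f_\delta$ inherits the same relation $f_\delta(x+h) + f_\delta(x-h) - 2 f_\delta(x) = 0$ on a slightly smaller set, for all sufficiently small $h$. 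Since $f_\delta \in C^\infty$, differentiating this identity twice with respect to $h$ at $h=0$ yields $2 \partial_i \partial_j f_\delta(x) = 0$ for all $i,j$, so $D^2 f_\delta \equiv 0$ on an open subset of $B_r(x_0)$. Hence $f_\delta$ is affine there: $f_\delta(x) = a_\delta + b_\delta \cdot x$.

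Now let $\delta \to 0$. Since $f$ is continuous, $f_\delta \to f$ uniformly on every compact subset of $B_r(x_0)$. Uniform limits of affine functions on a ball are affine (the coefficients $a_\delta, b_\delta$ converge), so $f$ is affine on $B_r(x_0)$, say $f(x) = a_{x_0} + b_{x_0} \cdot x$. Thus $f$ is locally affine everywhere on $\Omega$.

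Finally, for the global conclusion, observe that if $B_r(x_0)$ and $B_{r'}(x_1)$ are two such balls with non-empty intersection, the two affine representations agree on the open overlap, which forces $a_{x_0} = a_{x_1}$ and $b_{x_0} = b_{x_1}$. Since $\Omega$ is open and connected, it is path-connected, and any two points of $\Omega$ can be joined by a chain of overlapping balls on each of which $f$ is affine; propagating along the chain shows that a single pair $(a,b)$ works on all of $\Omega$, i.e., $f(x) = a + b \cdot x$. No step presents a serious obstacle; the only point requiring care is checking that $f_\delta$ genuinely satisfies the midpoint identity on a neighborhood of $h=0$ for each $x$, which amounts to the distance bookkeeping indicated above.
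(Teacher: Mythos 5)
Your proof is correct. The core idea — that the midpoint identity encodes vanishing of the Hessian — is the same as in the paper, but the technical route is genuinely different. You mollify $f$ directly, verify by a bookkeeping argument that $f_\delta = f * \eta_\delta$ inherits the midpoint identity on a slightly smaller ball, differentiate the smooth identity twice in $h$ at $h=0$ to get $D^2 f_\delta \equiv 0$ pointwise, and pass to the limit $\delta \to 0$ using local uniform convergence. The paper instead keeps $f$ as is and shifts the second difference quotient onto a test function $\phi \in C^\infty_c(B)$ by a change of variables, showing $\int_B \partial_i\partial_j\phi \cdot f = 0$ and concluding $\partial_i\partial_j f = 0$ in the distributional sense, then invoking mollification only at the very end to see that a distribution with vanishing second derivatives is affine. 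Your version avoids ever speaking of distributional derivatives and is perhaps more elementary to read, while the paper's dual formulation makes it immediate that the hypothesis on $f$ can be relaxed to $f \in L^1_{\mathrm{loc}}(\Omega)$ (your argument also survives that relaxation if uniform convergence is replaced by $L^1_{\mathrm{loc}}$ convergence, yielding the affine representative a.e., but you would need to say so). The paper also sketches a third, even more elementary proof via dyadic weights and continuity, which your proof is likewise independent of.
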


\begin{proof}
A possible proof of this lemma is quite elementary: For any $x_0 \in \Omega$, we show that in some open ball $B_r(x_0) \subset \Omega$, the function $f$ satisfies the affine functional relation for all dyadic weights 
$\tfrac{m}{2^n}\in [0,1]$. By continuity, this extends to all weight $p\in [0,1]$: thus $f$ is an affine function in $B_r(x_0)$. By the connectedness of $\Omega \subset \R^d$, we complete the proof.

For the reader's convenience, we shall now provide an alternative proof, which has more of a ``PDE flavor'', and which applies in fact to $f\in L^1_{\mathrm{loc}}(\Omega)$. 

Let $x_0 \in \Omega$ be given and suppose $B \subset  \Omega$ is an open ball with $x_0 \in B$ and closure $\overline{B} \subset \Omega$. Let $\phi \in C^\infty_c(B)$ be a test function. For any $h \in \R^d$ with $0 < |h| < \mathrm{dist}(B, \pt \Omega)$, we readily check that
\begin{multline*}
\int_{B} \left [ \frac{\phi(x+h) - 2 \phi(x) + \phi(x-h)}{|h|^2} \right ] f(x) \, dx \\ = \int_B \phi(x) \left [ \frac{f(x+h) - 2 f(x) + f(x-h)}{|h|^2} \right ] \, dx = 0,
\end{multline*}
where the last step follows from our assumption on $f$. On the other hand, for every unit vector $e \in \R^d$, we can choose $h = |h| e$ and find (by using dominated convergence and Taylor's theorem)
$$
\lim_{h \to 0} \int_B \left [ \frac{\phi(x+ |h| e) - 2 \phi(x) + \phi(x- |h| e)}{|h|^2} \right ] f(x) \, dx = \int_B ( e \cdot (D^2 \phi)(x) e) f(x) \, dx,
$$ 
where $(D^2 \phi)(x)$ denotes the Hessian matrix of $\phi$ evaluated at the point $x$. Since it holds
$$
\frac{\pt^2 \phi}{\pt x_i \pt x_j}(x) = \frac{1}{4} \left [ (e_i+e_j) \cdot (D^2 \phi)(x) (e_i+e_j))- (e_i - e_j) \cdot (D^2 \phi)(x) (e_i - e_j)\right ]
$$ 
thanks to the polarization formula for the symmetric matrix $D^2 \phi(x)$, we conclude
$$
\int_{B} \frac{\pt^2 \phi}{\pt x_i \pt x_j}(x) f(x) \,dx = 0 \quad \mbox{for $\phi \in C^\infty_c(B)$ and $1 \leq i,j \leq d$.}
$$ 
This shows that the identity $\frac{\pt^2 f}{\pt x_i \pt x_j} = 0$ holds for any $1 \leq i,j \leq d$ in the distributional sense on $B$, whence it follows that $f(x) = a + b \cdot x$ on $B$ with some constants $\alpha \in \R$ and $b \in \R^d$ (by a standard procedure using mollifiers). Since $x_0 \in \Omega$ was arbitrary and $\Omega \subset \R^d$ is connected, we conclude that $f(x) = a + b \cdot x$ on all of $\Omega$, with some constants $a\in \R$ and $b \in \R^d$. \end{proof}


\begin{lemma} \label{lem:AMT}
Let $d \in \N$ and $\alpha > 0$. If $u \in H^{d/2}(\R^d)$ is a maximizer for the Adams--Moser--Trudinger variational problem $S_d(\alpha)$ defined \eqref{def:S}, then $\widehat{u}$ is continuous.
\end{lemma}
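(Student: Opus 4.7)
The plan is to exploit the Euler--Lagrange equation for the maximizer, pass to Fourier space, and reduce continuity of $\widehat{u}$ to the Riemann--Lebesgue lemma. Since $u$ maximizes $\int(e^{\alpha|v|^2}-1)\,dx$ under the constraint $\|v\|_{H^{d/2}}\leq 1$ and the objective is strictly monotone in the constraint (replacing $v$ by $\lambda v$ with $\lambda>1$ strictly increases it), one has $\|u\|_{H^{d/2}}=1$. The Lagrange multiplier rule then produces
\[
\bigl((-\Delta)^{d/2}+I\bigr)u=\mu\,\alpha\,u\,e^{\alpha|u|^2}\qquad\text{in }\R^d,
\]
for some $\mu>0$. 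Taking Fourier transforms yields the pointwise identity
\[
\widehat{u}(\xi)=\frac{\mu\alpha\,\widehat{F}(\xi)}{1+(2\pi|\xi|)^d},\qquad F:=u\,e^{\alpha|u|^2}.
\]
The multiplier $(1+(2\pi|\xi|)^d)^{-1}$ is continuous on $\R^d$, so continuity of $\widehat{u}$ would follow from continuity of $\widehat{F}$, and by the Riemann--Lebesgue lemma it suffices to prove $F\in L^1(\R^d)$.

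To establish $F\in L^1$, I would combine two main pieces of information: the Sobolev embedding $H^{d/2}(\R^d)\hookrightarrow L^p(\R^d)$ for every $p\in[2,\infty)$, and the \emph{a priori} identity $\int|u|^2 e^{\alpha|u|^2}\,dx=\|u\|_{H^{d/2}}^2/(\mu\alpha)=1/(\mu\alpha)<\infty$, obtained by testing the EL equation against $\bar u$. Splitting $\R^d=\{|u|>1\}\cup\{|u|\leq 1\}$, on $\{|u|>1\}$ we have $|u|\,e^{\alpha|u|^2}\leq|u|^2 e^{\alpha|u|^2}$, which is integrable by the second fact. Writing $|u|\,e^{\alpha|u|^2}=|u|+|u|(e^{\alpha|u|^2}-1)$, on $\{|u|\leq 1\}$ we use $e^{t}-1\leq te^t$ to get $e^{\alpha|u|^2}-1\leq\alpha e^\alpha|u|^2$, hence $|u|(e^{\alpha|u|^2}-1)\leq\alpha e^\alpha|u|^3\leq\alpha e^\alpha|u|^2\in L^1$; so the nonlinear contribution is under control, and the only quantity left to estimate is $\int_{\{|u|\leq 1\}}|u|\,dx$.

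This residual integral is the main obstacle: it is \emph{not} controlled by $u\in L^2(\R^d)$ alone, and is tantamount to pointwise decay of $u$ at infinity of rate $|u(x)|=o(|x|^{-d})$. I would recover this decay from the EL equation itself by writing $u=G*F$, where $G=\FF^{-1}\bigl((1+(2\pi|\xi|)^d)^{-1}\bigr)$ is the radial, rapidly decaying Bessel-type kernel of $\bigl((-\Delta)^{d/2}+I\bigr)^{-1}$. The partial information on $F$ already gathered (local integrability plus the $L^1$ bound on $|u|^2 e^{\alpha|u|^2}$) feeds into a bootstrap via Young's inequality that progressively upgrades the integrability and pointwise decay of $u$, eventually yielding $u\in L^1(\R^d)$, hence $F\in L^1(\R^d)$, hence continuity of $\widehat F$ and of $\widehat u$. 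The delicate step is this bootstrap, whose precise form is sensitive to the dimension $d$ via the decay profile of $G$, and some additional care is required in the critical case $\alpha=\alpha_*(d)$ where maximizing sequences may concentrate.
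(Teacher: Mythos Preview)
Your reduction to showing $F=u\,e^{\alpha|u|^2}\in L^1(\R^d)$ is natural, and you correctly isolate the obstruction as $\int_{\{|u|\leq 1\}}|u|\,dx$, i.e.\ $u\in L^1$. But the bootstrap you sketch to close this is not justified and is where the proposal has a genuine gap. First, the kernel $G=\FF^{-1}\bigl((1+|2\pi\xi|^d)^{-1}\bigr)$ is \emph{not} rapidly decaying in general: for odd $d$ the symbol $|\xi|^d$ is only $C^{d-1}$ at the origin, so $G$ has only polynomial decay. Second, and more seriously, even granting $G\in L^1$, your partial information is $F\in L^1+L^2$ (since $F-u\in L^1$ and $u\in L^2$); convolving with $G$ returns $u\in L^1+L^2$, which is no improvement. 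If you try to absorb the $L^2$ part back into the left-hand side you are led to invert $I-\lambda G\ast$, whose Fourier multiplier is $\tfrac{|2\pi\xi|^d+1-\lambda}{|2\pi\xi|^d+1}$; this is harmless only if $\lambda<1$ (here $\lambda=\mu\alpha$). So the bootstrap does not start without that bound.

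The paper avoids the $L^1$ issue entirely by the algebraic move you nearly make yourself: since you already prove $u(e^{\alpha|u|^2}-1)\in L^1$, rewrite the Euler--Lagrange equation as
\[
(-\Delta)^{d/2}u+(1-\lambda)u=\lambda\,u\bigl(e^{\alpha|u|^2}-1\bigr)=:G\in L^1(\R^d),
\]
so that $\widehat G$ is continuous and $\widehat u(\xi)=\widehat G(\xi)\big/\bigl(|2\pi\xi|^d+1-\lambda\bigr)$ is continuous as soon as $\lambda<1$. The real work is thus the strict inequality $\lambda<1$, and this is where the paper's method differs from yours: one passes to the Fourier rearrangement $v=u^\sharp$, which is again a maximizer with the \emph{same} Lagrange multiplier, and exploits the positivity $\widehat v=(\widehat u)^*\geq 0$. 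Writing the analogous equation for $v$ and expanding the right-hand side as a convolution series in $\widehat v\geq 0$, one shows $\widehat G_v(\xi)>0$ for every $\xi$; if $\lambda=1$ this forces $\widehat v(\xi)=\widehat G_v(\xi)/|2\pi\xi|^d$, which is not in $L^2$ near $\xi=0$, a contradiction. So the missing idea is not a decay bootstrap for $u$, but a spectral obstruction ($\lambda<1$) obtained on the Fourier side via the rearranged maximizer.
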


\begin{proof}
By its maximizing property, the function $u \in H^{d/2}(\R^d)$ satisfies the corresponding Euler--Lagrange equation
\be
(-\Delta)^{d/2} u + u = \lambda u e^{\alpha |u|^2}
\ee
with some Lagrange multiplier $\lambda = \lambda(u) \in \R$. Integrating the equation against $\overline{u}$ and using that $\| u \|_{H^{d/2}}=1$, we obtain
\be 
\lambda = \frac{1}{\int_{\R^d} |u|^2 e^{\alpha |u|^2} \, dx} .
\ee
The rest of the proof will be divided into the two following steps.

\subsubsection*{Step 1: Upper Bound on $\lambda$} We show that the Lagrange multiplier satisfies 
\be \label{ineq:lam_apriori}
 \lambda < 1.
\ee
We argue as follows. Let $v = u^\sharp$ denote the Fourier rearrangement of the maximizer $u$. Since $u \in H^{d/2}(\R^d) \subset \FF(L^{q}(\R^d))$ for every $1 < q \leq 2$, we can apply Theorem \ref{thm:main} to conclude that
\be
\| v \|_{H^{d/2}} \leq \| u \|_{H^{d/2}}
\ee
and
\be \label{ineq:u_v}
 \int_{\R^d} (e^{\alpha |u|^2} -1) \, dx = \sum_{n=1} \frac{\alpha^n}{n!} \| u \|_{L^{2n}}^{2n} \leq \sum_{n=1} \frac{\alpha^n}{n!} \| v \|_{L^{2n}}^{2n} = \int_{\R^d} (e^{\alpha |v|^2} -1 ) \, dx .
\ee
Since $u$ is a maximizer, we must have equality everywhere above and in particular $v$ is also a maximizer for $S_d(\alpha)$. Therefore, we have
$$
(-\DD)^{d/2} v+ v  = \mu v e^{\alpha |v|^2}
$$
with some Lagrange multiplier $\mu= \mu(v) \in \R$ and likewise we obtain
$$
\mu = \frac{1}{\int_{\R^d} |v|^2 e^{\alpha |v|^2} \, dx }.
$$ 
But since we have equality in \eqref{ineq:u_v}, we have $\| u\|_{L^{2n}}^{2n} = \| v\|_{L^{2n}}^{2n}$ for any $n \in \N$. Hence, by series expansions, this implies $\int_{\R^d} |u|^2 e^{\alpha |u|^2} \, dx = \int_{\R^d} |v|^2 e^{\alpha |v|^2} \,dx$. Consequently, we obtain equality for the Lagrange multipliers:
\be 
\mu = \lambda.
\ee
Thus to prove the desired bound \eqref{ineq:lam_apriori}, we need to show that
\be  \label{ineq:mu}
\mu < 1.
\ee

To see that $\mu \geq 1$ cannot hold, we write the Euler-Lagrange equation satisfied $v$ as
\be  \label{eq:EL_trick}
(-\DD)^{d/2} v +( 1 -\mu ) v = F \quad \mbox{with $F=\mu v(e^{\alpha v^2} -1 )$},
\ee
where we use that $v^2 = |v|^2$ from now on, since $v$ is real-valued. Next, from \cite{OMeSe-08}[Lemma 2.2] we recall the following elementary estimate: For any $\beta > r > 1$, there exists a constant $C=C(\beta)>0$ such that
\be
\left ( e^{\alpha s^2} - 1 \right )^r \leq C(\beta) \left ( e^{\alpha \beta s^2} - 1 \right ) \quad \mbox{for all $s \in \R$.}
\ee
Since $\int_{\R^d} (e^{\gamma |w|^2} -1 ) \, dx < \infty$ for every $\gamma > 0$ and $w \in H^{d/2}(\R^d)$ (see Lemma \ref{lem:moser} below), we conclude from H\"older's inequality and the estimate above that $F = \mu v (e^{\alpha v^2}-1)$ belongs to $L^1(\R^d)$, which shows that $\widehat{F} \in C^0(\R^d)$ holds. Next, we write \eqref{eq:EL_trick} in Fourier space, which yields
\be \label{eq:EL_Fourier}
 \left (|2 \pi \xi|^{d}  +1 - \mu \right ) \widehat{v}(\xi)= \widehat{F}(\xi) .
\ee
Next, we will prove that
\be \label{ineq:F_positive}
\widehat{F}(\xi) > 0 \quad \mbox{for all $\xi \in \R^d$.}
\ee
First, we define the sequence $F_M \in L^1(\R^d)$, with $M \in \N$, by setting
$$
F_M(x) = \mu \sum_{k=1}^M \frac{\alpha^k}{k!} (v(x))^{2k+1},
$$ 
and we readily verify that $F_M \to F$ in $L^1(\R^d)$ as $M \to \infty$. As a consequence, we have $\widehat{F}_M(\xi) \to \widehat{F}(\xi)$ as $M \to \infty$ uniformly in $\xi \in \R^d$. Next, since $v \in H^{d/2}(\R^d)$, we recall that $\wv \in L^q(\R^d)$ for any $1 < q \leq 2$. Thus we can apply Lemma \ref{lem:conv} to conclude
$$
\widehat{F_M}(\xi) = \mu \sum_{k=1}^M \frac{\alpha^k}{k!} ( \underbrace{\wv \ast \ldots \ast \wv}_{\text{$2k+1$ times}})(\xi).
$$
Since $\wv(\xi) = (\wv)^*(\xi) \geq 0$ is non-negative together with $\mu > 0$ and $\alpha > 0$, we get
$$
0 \leq \widehat{F_1}(\xi) \leq \widehat{F_2}(\xi) \leq \ldots \leq \widehat{F_M}(\xi) \to \widehat{F}(\xi) \quad \mbox{for every $\xi \in \R^d$ as $M \to \infty$.}
$$
Note that the positivity of $\widehat{F}$ simply follows from \eqref{eq:EL_Fourier} and the form of $F$.  Since $\widehat{v}\neq 0$ is radial non-negative and monotone decreasing, there exists a nonempty ball $B_r(0)\subset \mathrm{supp}\,\widehat{v}$. For any $M\in\mathbb{N}$ we obtain $B_{Mr}(0)\subset \mathrm{supp}\, \widehat{F}_M$, and then $\mathrm{supp}\, \widehat{F}=\mathbb{R}^d$. In other words \eqref{ineq:F_positive} holds. If we recall now \eqref{eq:EL_Fourier}, we conclude that 
$$
\mu \le 1  \quad \mbox{and} \quad \mbox{$\wv(\xi) > 0$ for all $\xi \in \mathbb{R}^d$}. 
$$
Next, we assume that $\mu=1$ holds. From \eqref{eq:EL_Fourier} we get
$$
\wv(\xi) = \frac{\widehat{F}(\xi)}{|2 \pi \xi|^d} .
$$ 
Since $\widehat{F}(\xi) > 0$ is continuous, this implies
$$
|\wv(\xi)|^2 \geq  \frac{c}{|2 \pi\xi|^d} \quad \mbox{for} \quad |\xi| \leq r,
$$
with some small constants $c > 0$ and $r >0$. But this shows that $\wv \not \in L^2(\R^d)$, which is a contradiction. Thus we can only have that $\mu < 1$ holds and this proves $\eqref{ineq:mu}$.

\subsubsection*{Step 2: Conclusion} Recall that $u$ is an extremizer and thus satisfies in Fourier space
\be \label{eq:EL_finish}
\left (|2 \pi \xi|^d + 1 - \mu \right ) \widehat{u}(\xi) = \widehat{G}(\xi) \quad \mbox{almost everywhere},
\ee
with the function $G = \lambda u (e^{\alpha |u|^2}-1) \in L^1(\R^d)$ (analogous to Step 1 above). Thus the right-hand side is a continuous function. Furthermore, we recall that $\mu < 1$ from Step 1 above, whence it readily follows from \eqref{eq:EL_finish} that $\widehat{u} : \R^d \to \C$ must be a continuous function as well. The proof of Lemma \ref{lem:AMT} is now complete.
\end{proof}

\begin{lemma} \label{lem:moser}
For any $\gamma > 0$ and $u \in H^{d/2}(\R^d)$, we have $\int_{\R^d} (e^{\gamma |u|^2}-1) \, dx < \infty$.
\end{lemma}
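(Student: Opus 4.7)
The plan is to reduce the claim to the subcritical Adams--Moser--Trudinger estimate $S_d(\alpha)<\infty$ for $0<\alpha\le\alpha_*$ recorded in \eqref{def:S}, by peeling off a smooth compactly supported piece of $u$.

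First I would, by density of $C^\infty_c(\R^d)$ in $H^{d/2}(\R^d)$, decompose $u=\phi+w$ with $\phi\in C^\infty_c(\R^d)$ and with the remainder controlled by
\[ \|w\|_{H^{d/2}}^2 \le \frac{\alpha_*}{2\gamma}. \]
Let $B=\mathrm{supp}\,\phi$ (a set of finite Lebesgue measure) and $K=\|\phi\|_{L^\infty}<\infty$. The elementary inequality $|u|^2\le 2|\phi|^2+2|w|^2$ yields the two pointwise bounds
\[ e^{\gamma|u|^2} \le e^{2\gamma K^2}\,e^{2\gamma|w|^2}\quad\text{on }B,\qquad e^{\gamma|u|^2}=e^{\gamma|w|^2}\quad\text{on }B^c. \]

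Next I would split the integral and apply Adams--Moser--Trudinger to each piece after rescaling $\tilde w=w/\|w\|_{H^{d/2}}$ to have unit $H^{d/2}$-norm. On $B^c$,
\[ \int_{B^c}(e^{\gamma|u|^2}-1)\,dx \le \int_{\R^d}\bigl(e^{\gamma\|w\|_{H^{d/2}}^2\,|\tilde w|^2}-1\bigr)\,dx \le S_d\bigl(\gamma\|w\|_{H^{d/2}}^2\bigr) < \infty, \]
since $\gamma\|w\|_{H^{d/2}}^2\le\alpha_*/2\le\alpha_*$. On $B$, using $|B|<\infty$ together with the first pointwise bound,
\[ \int_B(e^{\gamma|u|^2}-1)\,dx \le e^{2\gamma K^2}\Bigl(|B|+\int_{\R^d}(e^{2\gamma|w|^2}-1)\,dx\Bigr), \]
and the last integral is again finite by the same rescaling argument because $2\gamma\|w\|_{H^{d/2}}^2\le\alpha_*$. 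Summing the two contributions gives the result.

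I do not expect any real obstacle, as the whole argument is a routine consequence of the bound $S_d(\alpha)<\infty$ for $\alpha\le\alpha_*$; the only care needed is to arrange the splitting so that the $L^\infty$ size of the smooth piece $\phi$ can be absorbed into a multiplicative constant, while the $H^{d/2}$ norm of the remainder $w$ stays small enough --- even after the factor $2$ introduced by $|u|^2\le 2|\phi|^2+2|w|^2$ --- to remain in the subcritical regime where $S_d$ is finite.
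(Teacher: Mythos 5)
Your proof is correct and rests on exactly the same key device as the paper's, namely the Chang--Marshall trick \cite{ChMa-85}: write $u=\phi+w$ with $\phi\in C^\infty_c(\R^d)$ and $\|w\|_{H^{d/2}}$ small enough, then use $|u|^2\leq 2|\phi|^2+2|w|^2$ to turn $\|\phi\|_{L^\infty}$ into a multiplicative constant and push $w$ into the subcritical regime. The bookkeeping is organized slightly differently: you split $\R^d$ by $B=\mathrm{supp}\,\phi$ versus $B^c$ and appeal directly to the global subcritical bound $S_d(\alpha)<\infty$ from \eqref{def:S} (with the ``$-1$'' built into the integrand, so integrability at infinity is automatic), whereas the paper first splits by the level set $\{|u|<1\}$ versus $\{|u|\geq 1\}$, handles the first piece with an elementary $L^2$ bound, and on the second --- which has finite measure --- applies a finite-measure (local) Adams inequality of the form $\sup_{\|v\|_{H^{d/2}}\leq 1}\int_\Omega e^{\beta_0|v|^2}\,dx\leq C|\Omega|$ quoted from \cite{LaLu-13}. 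Both routes deliver the same conclusion; yours is marginally cleaner since it avoids a separate treatment of $\{|u|<1\}$, but it leans on the slightly stronger global statement $S_d(\alpha)<\infty$, which the paper records in \eqref{def:S} as a known input from the literature, so there is no circularity. One trivial edge case you leave implicit is $w\equiv 0$, in which case $u=\phi$ is bounded and compactly supported and the claim is immediate.
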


\begin{proof}
We split the nonnegative integral as follows
$$
\int_{\R^d} (e^{\gamma |u|^2} - 1 ) \, dx = \int_{\{ |u| < 1 \} } (e^{\gamma |u|^2} - 1 ) \, dx + \int_{\{ |u| \geq 1 \}} (e^{\gamma |u|^2} - 1 ) \, dx =: \mathrm{I} + \mathrm{II}. 
$$
For $\mathrm{I}$, we note that
$$
\mathrm{I} = \sum_{k=1}^\infty \frac{\gamma^k}{k!} \left ( \int_{ \{ |u| < 1 \}} |u(x)|^{2k} \, dx \right ) \leq \sum_{k=1}^\infty \frac{\gamma^k}{k!} \| u \|_{L^2}^2 \leq e^{\gamma} \| u \|_{L^2}^2 < \infty.
$$
As for $\mathrm{II}$, we write $\Omega = \{ x \in \R^d: |u(x)| \geq 1 \}$ and we find
$$
\mathrm{II} \leq \int_{\Omega} e^{\gamma |u|^2} \,dx. 
$$ 
Let us show that the latter integral is finite. From \cite{LaLu-13}[Theorem 1.6] we deduce that there is some constant $\beta_0 > 0$ such that 
$$
\sup_{\| v \|_{H^{d/2}} \leq 1} \int_{\Omega} e^{\beta_0 |v|^2 } \,dx \leq C |\Omega|
$$ 
with some constant $C> 0$.  By scaling, it follows that for each $\beta > 0$ that
$$
\int_{\Omega} e^{\beta |v|^2} \, dx < \infty \quad \mbox{whenever} \quad \| v \|_{H^{d/2}} \leq \sqrt{\beta_0/\beta}.
$$
Now, we adapt the following trick in \cite{ChMa-85}. Let $\eps > 0$ be given and take $\varphi \in C^\infty_c(\R^d)$ such that $\| u - \varphi \|_{H^{d/2}} \leq \eps$. Using the pointwise  inequality $|u|^2 \leq 2 |u-\varphi|^2 + 2 |\varphi|^2 \leq 2 |u-\varphi|^2 + 2 \| \varphi \|_{L^\infty}^2$, we deduce that
$$
\int_{\Omega} e^{\gamma |u|^2} \, dx \leq e^{2 \gamma \| \varphi \|_{L^\infty}^2}  \int_{\Omega} e^{2 \gamma |u-\phi |^2 } \,dx \leq C e^{2 \gamma \| \varphi \|_{L^\infty}^2} |\Omega| < \infty,
$$ 
provided we take $\eps > 0$ and  $\varphi \in C^\infty_c(\R^d)$ such that $\| u - \varphi \|_{H^{d/2}} \leq \eps \leq \sqrt{\beta_0/2\gamma}$. 
\end{proof}

\begin{lemma} \label{lem:conv}
Let $m \in \N$ be an integer with $m \geq 1$. Then, for any $f \in \FF(L^{\frac{2m}{2m-1}}(\R^d))$,
$$
\widehat{(|f|^{2m})}(\xi) = (\wf \ast \wcf \ast \ldots \ast \wf \ast \wcf)(\xi) \quad \mbox{for all $\xi \in \R^d$},
$$
with $2m-1$ convolutions on the right side. 

Also, if $m \geq 2$ and $g \in \FF(L^{\frac{m}{m-1}}(\R^d))$ is real-valued, then
$$
\widehat{(g^{m})}(\xi) = (\wg  \ast  \ldots  \ast \wg)(\xi) \quad \mbox{for all $\xi \in \R^d$},
$$
with $m-1$ convolutions on the right side.
\end{lemma}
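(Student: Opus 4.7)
The plan is to prove both identities by a density argument: establish them first for Schwartz functions, where everything is immediate from the convolution theorem, and then extend them to the classes $\FF(L^{\frac{2m}{2m-1}}(\R^d))$, respectively $\FF(L^{\frac{m}{m-1}}(\R^d))$, by checking that each side of the identity depends continuously on the Fourier data in appropriate norms.

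First I would treat the Schwartz case: for $f \in \mathcal{S}(\R^d)$, iterating the elementary convolution theorem $\widehat{uv} = \wu \ast \wv$ together with the factorization $|f|^{2m} = f \cdot \bar f \cdot \ldots \cdot f \cdot \bar f$ ($2m$ factors) and the identity $\widehat{\bar f} = \wcf$ yields the first claim; the second claim, for Schwartz real-valued $g$, is obtained in the same way from $g^m = g \cdot g \cdots g$ ($m$ factors).

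Next I would verify the continuity of both sides. On the left-hand side, if $\wf_n \to \wf$ in $L^{\frac{2m}{2m-1}}(\R^d)$, the Hausdorff--Young inequality yields $f_n \to f$ in $L^{2m}(\R^d)$; combined with the pointwise bound $\bigl||f_n|^{2m} - |f|^{2m}\bigr| \leq C_m\bigl(|f_n|^{2m-1} + |f|^{2m-1}\bigr)|f_n - f|$ and H\"older's inequality (with conjugate exponents $\tfrac{2m}{2m-1}$ and $2m$), this gives $|f_n|^{2m} \to |f|^{2m}$ in $L^1(\R^d)$, hence $\widehat{|f_n|^{2m}} \to \widehat{|f|^{2m}}$ uniformly on $\R^d$. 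On the right-hand side, the multilinear Young convolution inequality shows that with $u_1, \ldots, u_{2m} \in L^{\frac{2m}{2m-1}}(\R^d)$ the iterated convolution $u_1 \ast \cdots \ast u_{2m}$ is a bounded continuous function on $\R^d$, with
\[
\|u_1 \ast \cdots \ast u_{2m}\|_{L^\infty} \leq \prod_{i=1}^{2m} \|u_i\|_{L^{\frac{2m}{2m-1}}},
\]
because the exponents satisfy $2m \cdot \tfrac{2m-1}{2m} = (2m-1) + \tfrac{1}{\infty}$. A standard telescoping argument then shows that the iterated convolution is jointly continuous in its $2m$ arguments with respect to the $L^{\frac{2m}{2m-1}}$-topology, so choosing $f_n \in \mathcal{S}(\R^d)$ with $\wf_n \to \wf$ in $L^{\frac{2m}{2m-1}}(\R^d)$ (which exists by density of $\mathcal{S}(\R^d)$ in $L^p(\R^d)$ for $1 \leq p < \infty$) allows one to pass to the limit on both sides and conclude the first claim.

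The second claim follows by the same scheme with $2m$ replaced by $m$ throughout: Hausdorff--Young places $g \in L^m(\R^d)$, the pointwise bound $|g_n^m - g^m| \leq C_m(|g_n|^{m-1} + |g|^{m-1})|g_n - g|$ yields $g_n^m \to g^m$ in $L^1(\R^d)$, and the $m$-fold Young inequality gives the required continuity of the convolution. The main (essentially only) obstacle is bookkeeping with Young's inequality to check that the correct dual exponent makes the iterated convolution land in $L^\infty$; once that is noted, the rest is routine.
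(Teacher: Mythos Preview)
Your proposal is correct and follows essentially the same approach as the paper: verify the identity for Schwartz functions via the convolution theorem, then pass to the limit using density, Hausdorff--Young on the left-hand side to get $L^1$-convergence of $|f_n|^{2m}$ (hence uniform convergence of its Fourier transform), and Young's inequality on the right-hand side to control the iterated convolutions. The paper's proof is slightly terser in places (it does not spell out the telescoping or the pointwise bound), but the underlying argument is the same.
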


\begin{remark*} {\em 
Note that if $f \in \FF(L^{\frac{2m}{2m-1}}(\R^d))$ then $f \in  L^{2m}(\R^d)$ by the Hausdorff--Young inequality, whence the Fourier transform of $|f|^{2m} \in L^1(\R^d)$ is continuous and bounded. }
\end{remark*}

\begin{proof}
By the classical convolution theorem for Schwartz functions, the asserted identity clearly holds for any Schwartz function $f \in \mathcal{S}(\R^d)$. Now let $f \in \FF(L^{\frac{2m}{2m-1}}(\R^d))$ be given. By Young's inequality and $\wf \in L^{\frac{2m}{2m-1}}(\R^d)$, we readily check that the map $\xi \mapsto (\wf \ast \wcf \ast \ldots \ast \wf \ast \wcf)(\xi)$ is a bounded function and, moreover, it is elementary to see that this map is continuous and vanishes as $|\xi| \to \infty$.

Now we take a sequence $f_j \in \mathcal{S}(\R^d)$ such that $\wf_j \to \wf$ in $L^{\frac{2m}{2m-1}}(\R^d)$ as $j \to \infty$. Applying Young's inequality for convolutions, we deduce that
$$
(\wf_j \ast \wcf_j \ast \ldots \ast \wf_j \ast \wcf_j)(\xi) \to (\wf \ast \wcf \ast \ldots \ast \wf \ast \wcf)(\xi)
$$
for every $\xi \in \R^d$. On the other hand, by the Hausdorff-Young inequality, we have $f_j \to f$ in $L^{2m}(\R^d)$, whence it follows that $|f_j|^{2m} \to |f|^{2m}$ in $L^1(\R^d)$. Therefore, we conclude that $\widehat{(|f_j|^{2m})}(\xi) \to \widehat{(|f|^{2m})}(\xi)$ for every $\xi \in \R^d$. By this limiting argument, we deduce that the asserted identity also holds for general $f \in \FF(L^{\frac{2m}{2m-1}}(\R^d))$.

The proof for the claimed identity for $g \in \FF(L^{\frac{m}{m-1}}(\R^d))$ follows along the same lines and hence we omit the details.
\end{proof}

\begin{lemma} \label{lem:conti}
Let $d \in \N$ and $s \geq 0$. Then the Fourier rearrangement $f \mapsto f^\sharp$ is continuous on $H^s(\R^d)$, i.\,e., if $f_k \to f$ strongly in $H^s(\R^d)$ then $f_k^\sharp \to f^\sharp$ strongly  in $H^s(\R^d)$. 
\end{lemma}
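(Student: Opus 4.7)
By Plancherel, $\|f\|_{H^s}^2 = \int_{\R^d}(1+|\xi|^2)^s|\wf(\xi)|^2\,d\xi$, so continuity of $f \mapsto f^\bsharp = \FF^{-1}\{(\wf)^*\}$ on $H^s(\R^d)$ is equivalent to continuity of the symmetric-decreasing rearrangement $g \mapsto g^*$ on the weighted Hilbert space $L^2(\R^d, W\,d\xi)$ with $W(\xi) := (1+|\xi|^2)^s$. The case $s=0$ is the classical $L^2$-non-expansivity $\|g^*-h^*\|_{L^2} \leq \|g-h\|_{L^2}$, so I focus on $s > 0$.

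Set $g_k := \wf_k$ and $g := \wf$, so $g_k \to g$ in $L^2(W\,d\xi)$. My plan is a weak-plus-norm-to-strong argument. Since $W \geq 1$, one has $g_k \to g$ in the unweighted $L^2$, hence by non-expansivity (and, after extracting a subsequence) $g_k^* \to g^*$ in $L^2$ and pointwise a.e. Lemma~\ref{lem:L_rearrange} applied to the radial multiplier $W$ gives $\|g_k^*\|_{L^2(W)} \leq \|g_k\|_{L^2(W)} \to \|g\|_{L^2(W)}$, so $\{g_k^*\}$ is bounded in $L^2(W)$; uniqueness of limits forces $g_k^* \weakto g^*$ weakly in $L^2(W)$. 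In the Hilbert space $L^2(W\,d\xi)$, strong convergence then follows once the norm convergence $\|g_k^*\|_{L^2(W)}^2 \to \|g^*\|_{L^2(W)}^2$ is established, which is the crux.

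For the norm convergence, set $u_k := |g_k|^2$ and $u := |g|^2$, so $u_k \to u$ in $L^1(W\,d\xi)$. The layer-cake identity yields
\[
\|g_k^*\|_{L^2(W)}^2 = \int_{\R^d} W u_k^*\,d\xi = \int_0^\infty \Phi_W(V_k(t))\,dt, \qquad \Phi_W(V) := \int_{B_{R(V)}(0)} W\,d\xi,
\]
with $V_k(t) := |\{u_k > t\}|$ and $|B_{R(V)}(0)| = V$. Since $\Phi_W$ is continuous and non-decreasing and $u_k \to u$ in $L^1$, the distribution functions satisfy $V_k(t) \to V(t)$ at almost every $t$, hence $\Phi_W(V_k(t)) \to \Phi_W(V(t))$ a.e.\ in $t$.

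The final step upgrades this pointwise convergence to $L^1((0,\infty), dt)$ via Vitali's convergence theorem. Combining the set-rearrangement inequality $\Phi_W(V_k(t)) \leq \int_{\{u_k > t\}} W\,d\xi$ with Fubini produces the three uniform bounds
\[
\int_T^\infty \Phi_W(V_k(t))\,dt \leq \int W (u_k - T)_+\,d\xi, \qquad \int_0^\varepsilon \Phi_W(V_k(t))\,dt \leq \int W \min(u_k, \varepsilon)\,d\xi,
\]
\[
\int_A \Phi_W(V_k(t))\,dt \leq \int W \min(u_k, |A|)\,d\xi.
\]
Since $u_k W \to u W$ in $L^1(\R^d, d\xi)$, the family $\{u_k W\}$ is classically uniformly integrable, so all three right-hand sides tend to $0$ uniformly in $k$ as $T \to \infty$, $\varepsilon \to 0$, or $|A| \to 0$. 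Vitali's theorem then delivers $\|g_k^*\|_{L^2(W)}^2 \to \|g^*\|_{L^2(W)}^2$, completing the proof. The main obstacle I foresee is making the Vitali bounds genuinely uniform in $k$; the device is to exploit $W \geq 1$ to transfer the $L^1(W\,d\xi)$-convergence of $\{u_k\}$ to ordinary uniform integrability of $\{u_k W\}$ in unweighted $L^1(\R^d, d\xi)$.
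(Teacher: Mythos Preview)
Your proof is correct and takes a genuinely different route from the paper. The paper's argument is a direct dominated-convergence proof: extract a subsequence along which $\widehat{f}_{k_m} \to \widehat{f}$ pointwise a.e.\ with a pointwise majorant $|\widehat{f}_{k_m}| \leq \widehat{F}$ for some $\widehat{F} \in L^2(W\,d\xi)$; then $(\widehat{f}_{k_m})^* \leq (\widehat{F})^*$, and since $(\widehat{F})^* \in L^2(W\,d\xi)$ by Lemma~\ref{lem:L_rearrange}, two applications of DCT give $(\widehat{f}_{k_m})^* \to (\widehat{f})^*$ in $L^2(W\,d\xi)$, with the sub-subsequence trick recovering the full sequence. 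Your weak-plus-norm approach replaces the single dominating function by the uniform integrability (and tightness) of $\{u_k W\}$ in $L^1(\R^d)$, which is automatic from the $L^1(W\,d\xi)$-convergence $u_k = |g_k|^2 \to |g|^2$; the layer-cake/Vitali computation then yields norm convergence, and the Radon--Riesz property of the Hilbert space $L^2(W\,d\xi)$ closes the argument. The paper's route is shorter and more elementary; yours is more robust in that it never needs to produce a pointwise majorant and runs for the full sequence throughout---indeed, your parenthetical ``after extracting a subsequence'' for pointwise a.e.\ convergence of $g_k^*$ is never used downstream (the $L^2$ non-expansivity already identifies the weak $L^2(W)$-limit, and the Vitali step only uses the distribution functions $V_k$), so it can be dropped.
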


\begin{proof}
Suppose that $f_k \to f$ in $H^s(\R^d)$. By Plancherel, this means that $\widehat{f}_k \to \widehat{f}$ in $L^2(\R^d, (1+|2\pi \xi|^2)^s d \xi)$. By standard arguments, we can find a subsequence $(\widehat{f}_{k_m})$ such that $\widehat{f}_{k_m} \to \widehat{f}$ pointwise a.\,e.~and $|\widehat{f}_{k_m}(\xi)| \leq \widehat{F}(\xi)$ for almost every $\xi \in \R^d$ with some nonnegative function $\widehat{F} \in L^2(\R^d, (1 + |2 \pi|^2)^s d\xi)$. Using  the bound
$$
\chi_{\{|\wf_{k_m}| > t\}}^*(x) \leq \chi_{\{ \widehat{F} > t \}}^*(x) \leq  \begin{dcases*} 1 & for $|x| \lesssim \| \widehat{F} \|_{L^2} / t^2$ \\ 0 & else \end{dcases*}
$$
and $|\wf_{k_m}| \to |\wf|$ pointwise a.\,e., we can use the dominated convergence theorem to deduce
$$
(\widehat{f}_{k_m})^*(\xi) = \int_0^\infty \chi_{\{|\widehat{f}_{k_m}| > t\}}^*(\xi) \, dt \rightarrow (\widehat{f})^*(\xi) = \int_0^\infty \chi_{\{ |\widehat{f}| > t \}}^*(\xi) \,dt
$$
pointwise for almost every $\xi \in \R^d$. Now, by Plancherel and dominated convergence again, 
$$
\| (f_{k_m})^\sharp - f^\sharp \|_{H^s}^2 = \int_{\R^d} |(\widehat{f}_{k_m})^*(\xi) - (\widehat{f})^*(\xi)|^2 \, (1+|2 \pi \xi|^2)^s \, d\xi \rightarrow 0.
$$
Thus we have shown that for every sequence $f_k \to f$ strongly in $H^s(\R^d)$ there exists a subsequence $(f_{k_m})$ such that $f_{k_m}^\sharp \to f^\sharp$ strongly in $H^s(\R^d)$. This proves that the mapping $f \mapsto f^\sharp$ is continuous on $H^s(\R^d)$.
\end{proof}

\section{Compactness Properties of the Fourier Rearrangement}

\label{sec:compact}

We recall that $H^{s,\sharp}(\R^d) = \{ f \in H^s(\R^d) : f=f^\sharp \}$ denotes the set of functions in $H^s(\R^d)$ that are equal to their Fourier rearrangement. 

\begin{lemma} \label{lem:wk_closed}
Let $d \in \N$ and $s \geq 0$. Then the set $H^{s,\sharp}(\R^d)$ is a closed convex cone in $H^s(\R^d)$. In particular, the set $H^{s, \sharp}(\R^d)$ is a weakly closed subset of $H^s(\R^d)$.
\end{lemma}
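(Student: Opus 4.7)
The plan is to observe that $f \in H^{s,\sharp}(\R^d)$ if and only if $\widehat{f}$ coincides (almost everywhere) with its own symmetric-decreasing rearrangement, that is, $\widehat{f} \geq 0$, is radially symmetric, and is a monotone non-increasing function of $|\xi|$. This characterization is the starting point for all three properties.

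For the \emph{convex cone} property, I would take $f, g \in H^{s,\sharp}(\R^d)$ and scalars $\alpha, \beta \geq 0$, and note that $\widehat{\alpha f + \beta g} = \alpha \widehat{f} + \beta \widehat{g}$ is a non-negative linear combination of radially symmetric, non-negative, monotone non-increasing functions of $|\xi|$; hence the sum has the same three properties, which means $(\alpha f + \beta g)\widehat{\phantom{a}} = ((\alpha f + \beta g)\widehat{\phantom{a}})^*$ almost everywhere, so $\alpha f + \beta g \in H^{s,\sharp}(\R^d)$.

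For the \emph{strong closedness}, I would invoke the continuity of the Fourier rearrangement on $H^s(\R^d)$ established in Lemma \ref{lem:conti}. Suppose $(f_k)$ is a sequence in $H^{s,\sharp}(\R^d)$ with $f_k \to f$ strongly in $H^s(\R^d)$. Since $f_k = f_k^\sharp$ for each $k$, Lemma \ref{lem:conti} gives $f_k = f_k^\sharp \to f^\sharp$ strongly in $H^s(\R^d)$. By uniqueness of strong limits we get $f = f^\sharp$, i.e., $f \in H^{s,\sharp}(\R^d)$.

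Finally, the \emph{weak closedness} is an immediate consequence of Mazur's theorem: a strongly closed and convex subset of a Banach space is weakly closed (in fact of a reflexive Hilbert space here). Since $H^{s,\sharp}(\R^d)$ has both properties by the previous steps, this concludes the proof. The only step requiring any care is verifying that the limit $\widehat{f}$ inherits the three properties (non-negativity, radial symmetry, monotonicity) from the $\widehat{f}_k$'s, and this is bypassed elegantly by reducing to the continuity statement of Lemma \ref{lem:conti} rather than doing a direct a.e. convergence argument.
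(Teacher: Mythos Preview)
Your proposal is correct and follows essentially the same approach as the paper: both verify the convex cone property via the characterization of $\widehat{f}$ as nonnegative, radial, and nonincreasing; both obtain strong closedness from the continuity result of Lemma~\ref{lem:conti}; and both conclude weak closedness from closed plus convex. The only cosmetic difference is that the paper treats the cone property and convexity in two separate steps, while you handle $\alpha f+\beta g$ for $\alpha,\beta\ge 0$ at once.
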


\begin{proof}
This follows from elementary arguments. Let us denote $K = H^{s,\sharp}(\R^d)$. First, we readily check that $K$ is a cone, i.\,e., we have $t f \in K$ for any $f \in K$ and $t \in [0,\infty)$. Indeed, this just follows from the identity $(\alpha f)^\sharp = \FF^{-1} ( |\FF (\alpha f)|^* )= \FF^{-1} (|\alpha| |\FF f|^*) = |\alpha| \FF^{-1}(|\FF f|^*) = |\alpha| f^\sharp$ valid for any constant $\alpha \in \C$. 

Next, we verify that the cone $K$ is convex. Suppose that $f, g \in K$ and let $\lambda \in [0,1]$. We need to show that $h := \lambda f + (1-\lambda) g \in K$. To see this, note that the functions $\wf = (\wf)^* \geq 0$ and $\wg = (\wg)^* \geq 0$ are radial monotone decreasing and lower-semicontinuous. Clearly, the convex combination $\widehat{h}= \lambda \wf + (1- \lambda) \wg$ has the same properties. But this implies that $\widehat{h}$ is equal to its symmetric-decreasing rearrangement $\widehat{h}^*$, whence it follows that $h = h^\sharp \in K$.

Finally, let $f_k \in K$ be a sequence with $f_k \to f$ strongly in $H^s(\R^d)$. By Lemma \ref{lem:conti}, we conclude that $f=f^\sharp$. This shows that $K$ is a closed subset of $H^s(\R^d)$. Since $K$ is closed and convex in $H^s(\R^d)$, it must be weakly closed.
\end{proof}

We have the following compactness lemma.

\begin{lemma} \label{lem:compact}
Let $d \in \N$ and $s > 0$. Then $H^{s, \sharp}(\R^d)$ is compactly embedded in $L^p(\R^d)$ for every $2 < p < p_*$, where $p_*=2d/(d-2s)$ if $s < d/2$ and $p_*= \infty$ if $s \geq d/2$. More precisely, if $(f_k)$ is a bounded sequence in $H^{s}(\R^d)$ such that $f_k = f_k^\sharp$ for all $k \in \N$, then there exist a subsequence $(f_{k_m})$ and some $f \in H^{s,\sharp}(\R^d)$ such that 
$$
\mbox{$f_{k_m} \to f$ strongly in $L^p(\R^d)$ as $m \to \infty$}
$$
for every $2 < p < p_*(s,d)$.
\end{lemma}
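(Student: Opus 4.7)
The plan is to combine weak compactness in $H^s(\R^d)$ with two special features of Fourier-rearranged functions: their Fourier transforms are nonnegative, radial, and nonincreasing in $|\xi|$, which forces pointwise decay bounds, and Helly's selection theorem on the radial profile upgrades weak to pointwise a.e.\ convergence. In essence, this is a Strauss-type radial compactness argument, carried out on the Fourier side.

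First I would extract, by reflexivity of $H^s$, a subsequence with $f_{k_m} \rightharpoonup f$ weakly in $H^s(\R^d)$. Lemma~\ref{lem:wk_closed} immediately places $f$ in $H^{s,\sharp}(\R^d)$, so $\widehat f = (\widehat f)^{*}$. The key technical step is a uniform pointwise decay estimate on $\widehat{f_k}$. Since $\widehat{f_k} \geq 0$ is nonincreasing in $|\xi|$, averaging over the ball $B_{|\xi|}(0)$ together with the uniform $L^2$-bound yields $\widehat{f_k}(\xi) \leq C|\xi|^{-d/2}$; repeating the argument with the weight $(1+|\eta|^2)^s$ (which is bounded by $(1+|\xi|^2)^s$ inside $B_{|\xi|}(0)$) together with the uniform $H^s$-bound improves this to $\widehat{f_k}(\xi) \leq C|\xi|^{-(d+2s)/2}$ for $|\xi| \geq 1$. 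Denoting the resulting majorant by $G$, one checks that $G \in L^q(\R^d)$ exactly for $q \in (2d/(d+2s),\, 2)$, a range which by duality corresponds to $p \in (2, p_*(s,d))$ on the physical side.

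Next, writing $\widehat{f_{k_m}}(\xi) = \phi_{k_m}(|\xi|)$ with each radial profile nonincreasing on $(0,\infty)$ and, thanks to the decay estimate, locally uniformly bounded, Helly's selection theorem (via diagonalization in the endpoint) extracts a further subsequence with $\phi_{k_m}(r) \to \phi(r)$ at every continuity point of the monotone limit $\phi$, hence for a.e.\ $r$. Setting $\Phi(\xi) := \phi(|\xi|)$, this yields $\widehat{f_{k_m}} \to \Phi$ almost everywhere on $\R^d$. Since $G \in L^1_{\mathrm{loc}}(\R^d)$, dominated convergence upgrades this to convergence in $\mathcal{S}'(\R^d)$, where the weak $H^s$-limit forces $\Phi = \widehat f$ a.e. A final dominated convergence argument using $|\widehat{f_{k_m}} - \widehat f| \leq 2G \in L^q(\R^d)$ gives strong convergence $\widehat{f_{k_m}} \to \widehat f$ in $L^q(\R^d)$ for each $q \in (2d/(d+2s),\, 2)$, and the Hausdorff--Young inequality $\|u\|_{L^{q'}} \leq \|\widehat u\|_{L^q}$ applied to $u = f_{k_m} - f$ then delivers the desired strong convergence in $L^p(\R^d)$ for all $p \in (2, p_*(s,d))$.

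I expect the main obstacle to be the uniform pointwise decay $\widehat{f_k}(\xi) \leq C|\xi|^{-(d+2s)/2}$ with the correct exponent matching $p_*(s,d)$: every downstream step (Helly, identification of the limit, dominated convergence, Hausdorff--Young) is then routine. The decay itself rests on the simple observation that a nonnegative radial nonincreasing function is pointwise controlled by its average over sub-level balls, which serves here as the Fourier-side substitute for the Strauss radial Sobolev estimate.
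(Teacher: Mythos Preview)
Your proof is correct and follows essentially the same route as the paper's: weak compactness in $H^s$, a Strauss-type pointwise decay bound $\widehat{f_k}(\xi) \lesssim \min\{|\xi|^{-d/2},\,|\xi|^{-d/2-s}\}$ from radial monotonicity, Helly's selection for pointwise convergence of $\widehat{f_k}$, dominated convergence in $L^{p'}$ on the Fourier side, and Hausdorff--Young to conclude. One minor slip in your exposition: for the improved decay you need the \emph{lower} bound $\int_{B_{|\xi|}(0)} (1+|\eta|^2)^s\,d\eta \gtrsim |\xi|^{d+2s}$, not the upper bound on the weight that your parenthetical states---but the intended argument is clear and matches the paper's.
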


\begin{remarks*}
{\em 1) In fact, we will prove slightly more: We conclude strong convergence of $\widehat{f}_k=(\widehat{f}_k)^*$ in $L^{p'}(\R^d)$, where $1 < p' <2$ is the conjugate exponent of $2<p<p_*(s,d)$.

2) For $s>1/2$ and dimensions $d\geq 2$, we could alternatively show the compactness of the embedding $H^{s,\sharp}(\R^d) \subset L^p(\R^d)$ for $2 < p < p_*(s,d)$ by using a generalized Strauss lemma from \cite{ChOz-09} for radial functions in $\dot{H}^s(\R^d)$ with $1/2 < s < d/2$. However, by exploiting the additional property that the functions $\wf_k$ are radially monotone decreasing, we can also deal with the $d=1$ case and all $s >0$.}
\end{remarks*}

\begin{proof}
Let $(f_k)$ be a bounded sequence in $H^s(\R^d)$ with $f_k = f_k^\sharp$ for all $k$. By passing to a subsequence if necessary, we can assume that $f_k \weakto f$ weakly in $H^s(\R^d)$. By Lemma \ref{lem:wk_closed}, the weak limit $f$ belongs to $H^{s, \sharp}(\R^d)$. 

The rest of the proof is inspired by a compactness argument \cite{Li-77} for minimizing symmetric-decreasing sequences $u_k = u_k^* \in H^1(\R^3)$ for the Choquard--Pekar problem, the main ingredient being Helly's selection principle. Here we use a similar argument applied on the Fourier side. 

Using that $\| f_k \|_{H^s} \leq C$ with some constant $C>0$ combined with the fact that the functions $\wf_k(\xi) = (\wf_k)^*(\xi) \geq 0$ are radial and monotone decreasing, we deduce 
$$
C   \geq \int_{|\xi| \leq R} |\wf_k(\xi)|^2 (1 +|\xi|^2)^s \, d \xi \geq |\wf_k(R)|^2 \int_{|\xi| \leq R} (1+|\xi|^2)^s d\xi  \gtrsim |\wf_k(R)|^2 (  R^d + R^{d+2s} )
$$
for all radii $R > 0$. From this we conclude the uniform pointwise bound
\be \label{ineq:f_upper}
0 \leq\wf_k(\xi) \leq C \min \left \{ R^{-d/2}, R^{-d/2-s} \right \} \quad \mbox{for} \quad |\xi|=R.
\ee
Next, by invoking Helly's selection principle for the radially symmetric and monotone decreasing functions $(\wf_k)$, we can find a subsequence (still indexed by $k$ for convenience) such that we have pointwise convergence
$$
\wf_k(\xi) \to \widehat{g}(\xi) \quad \mbox{for every $\xi \in \R^d \setminus \{0 \}$ as $k \to \infty$},
$$ 
with some radial monotone decreasing function $0 \leq \widehat{g}(\xi) \leq C \min \{ |\xi|^{-d/2}, |\xi|^{-d/2-s} \}$. Thus, by using dominated convergence and estimate \eqref{ineq:f_upper}, we conclude that
$$
\| \wf_k - \widehat{g} \|_{L^r} \to 0 \quad \mbox{for every $q_* < r < 2$ as $k \to \infty$},
$$
where $q_* = 2d/(d+2s)$ if $s<d/s$ and $q_*=1$ if $s \geq d/2$ denotes the dual exponent of $p_*$. Note that the dominating function $C \min \{ |\xi|^{-d/2}, |\xi|^{-d/2-s} \}$ is in $L^r(\R^d)$.
An application of the Hausdorff--Young inequality yields
$$
\| f_k - g \|_{L^p} \to 0 \quad \mbox{for every $2 < p < p_*$ as $k \to \infty$.} 
$$
Finally, we note that $f_k \weakto f$ weakly in $H^s(\R^d)$ readily implies that $g=f$.
\end{proof}

Let $d \in \N$, $s > 0$, and suppose $2 < p < p_*(s,d)$. For the Gagliardo--Nirenberg inequality \eqref{ineq:GN} in Section \ref{sec:applications} above, we consider the corresponding Weinstein functional given by
\be
J_{d,s,p}(u) = \frac{ \| (-\DD)^s u \|_{L^2}^\theta \| u \|_{L^2}^{1- \theta}}{\| u \|_{L^p}}
\ee
for $u \in H^s(\R^d) \setminus \{ 0 \}$.

\begin{proposition}
For $d,s,p$ as above, the infimum
$$
\Copt_{d,s,p}^{-1} = \inf_{u \in H^s, u \not \equiv 0} J_{d,s,p}(u)
$$
is attained.
\end{proposition}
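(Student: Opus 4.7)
The plan is to run the direct method on a minimizing sequence, exploiting the fact that Fourier rearrangement lands us inside the compactly embedded cone $H^{s,\sharp}(\R^d)$. I would first pick any minimizing sequence $(u_k)\subset H^s(\R^d)\setminus\{0\}$ with $J_{d,s,p}(u_k)\to\Copt_{d,s,p}^{-1}$. Using the joint dilation/homogeneity invariance $u(x)\mapsto \lambda u(\mu x)$ of $J_{d,s,p}$, I would normalize so that
\[
\|u_k\|_{L^2}=\|(-\Delta)^{s/2} u_k\|_{L^2}=1,\qquad \|u_k\|_{L^p}\to \Copt_{d,s,p}.
\]

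Next I would replace $u_k$ by $v_k:=u_k^\sharp\in H^{s,\sharp}(\R^d)$. Plancherel together with Lemma \ref{lem:L_rearrange} gives $\|v_k\|_{L^2}=1$ and $\|(-\Delta)^{s/2} v_k\|_{L^2}\leq 1$, so $(v_k)$ is bounded in $H^s$. Assuming first that $p\in 2\N$, Lemma \ref{lem:BLL} yields $\|v_k\|_{L^p}\geq \|u_k\|_{L^p}\to \Copt_{d,s,p}$, so $(v_k)$ is a bona fide minimizing sequence living entirely in the Fourier-symmetric cone. I would then apply Lemma \ref{lem:compact} to extract a subsequence $v_{k_m}\to v$ strongly in $L^p(\R^d)$ and weakly in $H^s(\R^d)$, where $v\in H^{s,\sharp}(\R^d)$ by the weak closedness from Lemma \ref{lem:wk_closed}. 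Strong $L^p$ convergence forces $\|v\|_{L^p}=\lim\|v_{k_m}\|_{L^p}\geq \Copt_{d,s,p}>0$, so $v\not\equiv 0$, while weak lower semi-continuity gives $\|v\|_{L^2}\leq 1$ and $\|(-\Delta)^{s/2} v\|_{L^2}\leq 1$. Therefore
\[
J_{d,s,p}(v)=\frac{\|(-\Delta)^{s/2} v\|_{L^2}^\theta\|v\|_{L^2}^{1-\theta}}{\|v\|_{L^p}}\leq \frac{1}{\Copt_{d,s,p}},
\]
and equality must hold since $\Copt_{d,s,p}^{-1}$ is the infimum, so $v$ is an optimizer.

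The main obstacle is the case when $p\in(2,p_*(s,d))$ is not an even integer, where the second inequality in Theorem \ref{thm:main} is not available and one cannot conclude from $v_k=u_k^\sharp$ that $(v_k)$ is still minimizing (the $L^p$-norm could strictly decrease under Fourier rearrangement). To handle this range I would invoke the Hardy--Littlewood-type majorant inequality $\|u\|_{L^p}\leq C_p\|u^\sharp\|_{L^p}$ from \cite{Frank2018} (the $\R^d$-analogue of Montgomery's bound): this guarantees that $\|v_k\|_{L^p}\geq C_p^{-1}\Copt_{d,s,p}>0$, so the limit $v\in H^{s,\sharp}(\R^d)$ extracted from Lemma \ref{lem:compact} is still nontrivial. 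One then minimizes $J_{d,s,p}$ restricted to $H^{s,\sharp}(\R^d)$ via the same compactness argument to obtain a minimizer on this cone, and finally compares with a globally minimizing sequence (or appeals to a standard concentration-compactness alternative on $H^s(\R^d)$) to verify that the constrained infimum matches $\Copt_{d,s,p}^{-1}$; the scale-invariance plus the radial monotonicity of $\widehat{v_k}$ rule out the vanishing and dichotomy scenarios.
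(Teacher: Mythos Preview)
For the even-integer case $p\in 2\N$ your argument is essentially the paper's own proof: replace a minimizing sequence by its Fourier rearrangements (still minimizing thanks to Theorem \ref{thm:main}), normalize, invoke the compact embedding of Lemma \ref{lem:compact} to extract a nontrivial strong $L^p$-limit, and conclude by weak lower semi-continuity of the $H^s$-seminorms. The only cosmetic difference is the choice of normalization: the paper fixes $\|f_k\|_{L^2}=\|f_k\|_{L^p}=1$, whereas you fix $\|u_k\|_{L^2}=\|(-\Delta)^{s/2}u_k\|_{L^2}=1$.

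For $p\notin 2\N$ the paper does not attempt a Fourier-rearrangement proof at all; it simply defers to concentration-compactness methods and to the compactness lemma of \cite{BeFrVi-14}. Your proposed extension in this range has a genuine gap. The Montgomery-type bound $\|u\|_{L^p}\leq C_p\|u^\sharp\|_{L^p}$ with $C_p>1$ only shows that the infimum of $J_{d,s,p}$ over the cone $H^{s,\sharp}(\R^d)$ lies somewhere in the interval $[\Copt_{d,s,p}^{-1},\,C_p\,\Copt_{d,s,p}^{-1}]$; nothing in your sketch forces it to equal $\Copt_{d,s,p}^{-1}$, and a minimizer on the cone need not minimize globally. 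Your final sentence effectively concedes this by reverting to ``a standard concentration-compactness alternative'' to match the two infima, at which point the Fourier rearrangement is no longer doing any work. The assertion that ``radial monotonicity of $\widehat{v_k}$ rules out vanishing and dichotomy'' is not substantiated: monotonicity on the Fourier side yields the pointwise bound \eqref{ineq:f_upper} exploited in the proof of Lemma \ref{lem:compact}, but that lemma already exhausts what one obtains from it and does not produce a global minimizer once the rearranged sequence is no longer minimizing.
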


\begin{proof}
This can be deduce from e.\,g.~concentration-compactness methods; see \cite{BeFrVi-14} for an alternative proof based on an extension to $H^s(\R^d)$ of a compactness lemma due to Lieb.

Here we will give an alternative and quite elementary proof by using Fourier rearrangement provided that $p \in 2 \N$ is an even integer.

Let $(f_k) \subset H^s(\R^d)$ be a minimizing sequence and we can choose that $f_k = f_k^\sharp$ by Theorem \ref{thm:main}. Furthermore, by scaling, we can assume that $\| f_k \|_{L^2} = \| f_k \|_{L^p} = 1$ for all $k \in \N$. Since $J_{d,s,p}(f_k) \to \Copt_{d,s,p}^{-1}$, is is easy to see that $\| (-\DD)^{s/2} f_k \|_{L^2} \leq C$ for some constant $C> 0$. Thus $(f_k)$ is bounded in $H^s(\R^d)$ and we can assume (by passing to a subsequence if necessary) that $f_k \weakto f$ weakly in $H^s(\R^d)$. By Lemma \ref{lem:compact}, we conclude after passing to a subsequence that $f_k \to f$ strongly in $L^p(\R^d)$. Since $\| f_k \|_{L^p}=1$, this implies that $f \not \equiv 0$. Finally, using that $\liminf_{k} \| (-\DD)^{s/2} f_k \|_{L^2} \geq \| (-\DD)^{s/2} f \|_{L^2}$ and $\liminf_k \| f_k \|_{L^2} \geq \| f \|_{L^2}$ and the strong convergence $f_k \to f \not \equiv 0$ in $L^p(\R^d)$, we deduce 
$$
\Copt_{d,s,p}^{-1} = \lim_{k \to \infty} J_{d,s,p}(f_k) \geq J_{d,s,p}(f) \geq \Copt_{d,s,p}^{-1}.
$$
Hence we have equality everywhere and we see that $f \in H^s(\R^d) \setminus \{ 0 \}$ is a minimizer.
\end{proof}

\begin{proposition}
The infimum in \eqref{def:min_m} is attained.
\end{proposition}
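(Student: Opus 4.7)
The plan is to mirror the existence argument for the Gagliardo--Nirenberg optimizer given in the preceding proposition, this time using the quadratic form
$T(v) = \tfrac12\langle v,Lv\rangle + \tfrac{\omega}{2}\|v\|_{L^2}^2$
in place of the scale-invariant Weinstein ratio. Because $T$ is not scale invariant, one rescaling step must be carried out with a little more care.

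First, I would start with a minimizing sequence $(v_k) \subset H^s(\R^d)$ for \eqref{def:min_m}, so that $\|v_k\|_{L^p} = 1$ and $T(v_k) \to \mathfrak m$. Setting $\beta_k := \|v_k^\sharp\|_{L^p}$ (which is $\ge 1$ by Theorem \ref{thm:main}) and $w_k := v_k^\sharp/\beta_k$, the scaling identity $(\alpha v)^\sharp = \alpha v^\sharp$ for $\alpha > 0$, together with the equality $\|v_k^\sharp\|_{L^2} = \|v_k\|_{L^2}$ and the operator inequality of Theorem \ref{thm:main}, yield $w_k \in H^{s,\sharp}(\R^d)$, $\|w_k\|_{L^p} = 1$, and
\[
T(w_k) = \beta_k^{-2} T(v_k^\sharp) \le T(v_k^\sharp) \le T(v_k),
\]
so $(w_k)$ remains a minimizing sequence, now consisting of Fourier rearrangements.

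Next I would extract a subsequence converging strongly in $L^p$. The $L^2$-bound on $(w_k)$ comes directly from $\omega > 0$ and $T(w_k)$ bounded, and the $\dot H^s$-bound comes from the natural coercivity $\omega(\xi) \gtrsim |\xi|^{2s}$ at infinity (which holds for all the principal examples, in particular for $(-\Delta)^s$, $(-\Delta+1)^{s/2}$, and $\Delta^2 - \beta \Delta$). Passing to a subsequence, $w_k \weakto w$ in $H^s(\R^d)$; Lemma \ref{lem:wk_closed} forces $w \in H^{s,\sharp}(\R^d)$, and the compact embedding in Lemma \ref{lem:compact} upgrades this to $w_k \to w$ strongly in $L^p$, so that $\|w\|_{L^p} = 1$ and in particular $w \not\equiv 0$. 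Finally, $v \mapsto T(v)$ is a non-negative continuous quadratic form on $H^s(\R^d)$ and therefore weakly lower semi-continuous (alternatively, via Plancherel $\langle v,Lv\rangle = \int \omega(\xi)|\widehat v(\xi)|^2\,d\xi$ is handled by Fatou applied along an almost-everywhere convergent subsequence of the radial monotone-decreasing functions $\widehat{w_k}$ extracted by Helly, as in the proof of Lemma \ref{lem:compact}). Hence $T(w) \le \liminf_k T(w_k) = \mathfrak m$, and admissibility of $w$ forces $T(w) = \mathfrak m$.

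The main obstacle is the $H^s$-boundedness step: Assumption \ref{ass:L} provides only the upper bound $\omega(\xi) \le C(1+|\xi|^{2s})$, so without a matching lower bound at infinity the minimizing sequence need not be bounded in $H^s$ and the compact extraction via Lemma \ref{lem:compact} would break down. In such degenerate cases (e.g., the bounded Whitham symbol) one would have to replace $H^s(\R^d)$ by the Hilbert space naturally associated to the form $v \mapsto \langle v,Lv\rangle + \omega\|v\|_{L^2}^2$ and re-prove a compactness statement in that space; for the operators $L$ of genuine interest in Theorem \ref{thm:gs}, the coercivity is automatic and the argument proceeds without further modification.
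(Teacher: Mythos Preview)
Your argument is correct and follows essentially the same route as the paper: pass to Fourier-rearranged minimizers, use the compact embedding of $H^{s,\sharp}(\R^d)$ into $L^p$ (Lemma \ref{lem:compact}), and conclude by weak lower semicontinuity of $T$. Your explicit rescaling by $\beta_k=\|v_k^\sharp\|_{L^p}$ is equivalent to the paper's trick of relaxing the constraint to $\|v\|_{L^p}\geq 1$ before renormalizing, and your caveat about coercivity is well taken---the paper's proof simply asserts $T(v_k)\geq C\|v_k\|_{H^s}$ without comment, which indeed requires the lower bound $\omega(\xi)\gtrsim |\xi|^{2s}$ for large $|\xi|$ that Assumption \ref{ass:L} does not literally provide.
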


\begin{proof}
We use the notation in the proof of Theorem \ref{thm:gs}. First, we easily check that
$$
\mathfrak{m} = \inf \{ T(v) : v \in H^s(\R^d), \; \| v \|_{L^p} = 1 \} = \inf \{ T(v) : v \in H^s(\R^d), \; \| v \|_{L^p} \geq 1 \}.
$$
By Theorem \ref{thm:main}, we have $T(v^\sharp) \leq T(v)$ and $\|v^\sharp \|_{L^p} \geq \| v \|_{L^p}$. Thus if $(v_k)$ is a minimizing sequence for $\mathfrak{m}$, so is the sequence $(v_k^\sharp)$. 

Thus we can assume that $v_k=v^\sharp_k$ is a minimizing sequence. Since $T(v_k) \geq C \| v_k \|_{H^s}$ with some constant $C> 0$ independent of $v_k$, we conclude that $(v_k)$ is bounded in $H^s(\R^d)$. By replacing $v_k \to v_k / \|v_k\|_{L^p}$ if necessary, we can assume that $\| v_k \|_{L^p}=1$ without loss of generality. Next, by standard arguments, we can assume that $v_k \weakto v$ weakly in $H^s(\R^d)$ for some $v \in H^s(\R^d)$. Furthermore, by Lemma \ref{lem:compact}, we conclude that $v_k \to v$ strongly in $L^p(\R^d)$ and hence $\| v \|_{L^p}=1$. By the weak lower semi-continuity of $T(v)$, we conclude that  $\mathfrak{m} = \lim T(v_k) \geq	T(v)  \geq \mathfrak{m}$
and thus $v$ is a minimizer.
\end{proof}

\end{appendix}

\bibliographystyle{plain}

\end{document}